\documentclass[a4paper]{amsart}
\usepackage[ps2pdf]{hyperref}
\usepackage{amsmath,amssymb}
\usepackage{bbold}
\usepackage{psfrag}
\usepackage{graphicx}
\usepackage{leftidx}
\usepackage[all,poly]{xy}
\usepackage[latin1]{inputenc}

\usepackage[normalem]{ulem}
\usepackage{tikz-cd}
\usepackage[dvipsnames]{xcolor}

\newtheorem{thm}{Theorem}[section]
\newtheorem{cor}[thm]{Corollary}
\newtheorem{lem}[thm]{Lemma}

\newcommand{\mylistlabel}[1]{{(#1)}}
\newenvironment{mylist}[1]%
 {\begin{list}{}{\settowidth{\labelwidth}{\mylistlabel{#1}}%
  \settowidth{\labelsep}{\ }
  \setlength{\topsep}{0pt}
  \setlength{\itemsep}{.2em}
  \setlength{\leftmargin}{\labelwidth}\addtolength{\leftmargin}{\labelsep}%
  }}{\end{list}}

\newcommand{\ns}{{\mkern-1.5mu}}
\newcommand{\ps}{{\mkern2mu}}
\newcommand{\B}{X}
\newcommand{\x}{\theta}
\newcommand{\e}{\lambda}
\newcommand{\up}{\mathcal{U}}
\newcommand{\low}{\mathcal{L}}
\newcommand{\I}{\mathcal{I}}

\newcommand{\ant}{\Psi}%{\mathrm{S}}
\newcommand{\pfpart}[1]{\noindent {\it #1.}}

\newcommand{\ab}{\mathrm{ab}}

\newcommand{\CM}{\chi}

\newcommand{\co}{\colon}
\newcommand{\id}{\mathrm{id}}

\newcommand{\opp}{\mathrm{op}}

\newcommand{\cop}{\mathrm{cop}}

\newcommand{\cc}{\mathcal{C}}
\newcommand{\tg}{\mathcal{G}}

\newcommand{\aaa}{\mathcal{A}}

\newcommand{\te}{\mathcal{G}_1^1}

\newcommand{\pp}{\mathcal{P}}

\newcommand{\uu}{\mathcal{U}}

\newcommand{\vv}{\mathcal{V}}

\newcommand{\CO}{C}

\newcommand{\Ker}{\mathrm{Ker}}
\newcommand{\Coker}{\mathrm{Coker}}

\newcommand{\kk}{\Bbbk}
\newcommand{\lact}[2]{\leftidx{^{#1}}{{#2}}{}}
\newcommand{\elact}[2]{\leftidx{^{#1}}{{\hspace{-.08em} #2}}{}}
\newcommand{\kt}{$\Bbbk$\nobreakdash-\hspace{0pt}}

\newcommand{\ti}{\mbox{-}}
\newcommand{\N}{\mathbb{N}}
\newcommand{\Z}{\mathbb{Z}}

\newcommand{\Aut}{\mathrm{Aut}}

\newcommand{\Hom}{\mathrm{Hom}}

\newcommand{\Ima}{{\mathrm{Im}}}

\newcommand{\mmod}{\mathrm{mod}}

\newcommand{\scaledraw}[1]{A}
\newcommand{\scaleraisedraw}[2]{A}
\newcommand{\rsdraw}[3]{\raisebox{-#1\height}{\scalebox{#2}{\includegraphics{#3.eps}}}}

\newcommand{\labeli}{\renewcommand{\labelenumi}{{\rm (\roman{enumi})}}}

\makeatletter
\newcommand*\rel@kern[1]{\kern#1\dimexpr\macc@kerna}
\newcommand*\widebar[1]{%
  \begingroup
  \def\mathaccent##1##2{%
    \rel@kern{0.8}%
    \overline{\rel@kern{-0.8}\macc@nucleus\rel@kern{0.2}}%
    \rel@kern{-0.2}%
  }%
  \macc@depth\@ne
  \let\math@bgroup\@empty \let\math@egroup\macc@set@skewchar
  \mathsurround\z@ \frozen@everymath{\mathgroup\macc@group\relax}%
  \macc@set@skewchar\relax
  \let\mathaccentV\macc@nested@a
  \macc@nested@a\relax111{#1}%
  \endgroup
}
\makeatother

\begin{document}
\title[Quantum invariants of flat 2-bundles over 3-manifolds]{Quantum  invariants of flat 2-bundles\\ over 3-manifolds}
\author{K\"{u}r\c{s}at S\"{o}zer}
\address{K\"{u}r\c{s}at S\"{o}zer\newline
\indent  Univ. Lille, CNRS, UMR 8524 - Laboratoire Paul Painlev\'e, F-59000 Lille, France\\}
\email{kursat.sozer@univ-lille.fr}
\author{Alexis Virelizier}
\address{Alexis Virelizier\newline
\indent Univ. Lille, CNRS, UMR 8524 - Laboratoire Paul Painlev\'e, F-59000 Lille, France\\}
\email{alexis.virelizier@univ-lille.fr}

\subjclass[2020]{57K16, 57K31, 55R15, 18G45, 16T05}
\date{\today}

\begin{abstract}
We construct a scalar invariant of flat principal 2-bundles over 3\ti manifolds, with structure 2-group $\tg$, from an involutory Hopf algebra graded by~$\tg$. Expressing $\tg$ in terms of a crossed module $\CM$ and using the classification of such $2$-bundles via the  classifying space $B\CM$, this amounts to constructing a homotopy invariant of maps from 3-manifolds to $B\CM$. The construction of the invariant relies on a combinatorial description of such maps by $\CM$-colored Heegaard diagrams. When the corresponding map to $B\CM$ is nullhomotopic or, equivalently, when the associated flat principal $\tg$-bundle is trivializable, the invariant reduces to the Kuperberg invariant of the underlying $3$-manifold. 
\end{abstract}

\maketitle

\setcounter{tocdepth}{1}
\tableofcontents

\section{Introduction}\label{sec-Intro}
Since the 1990s, deep connections between low dimensional topology and Hopf algebras have emerged. Notably, the first quantum invariants of 3-manifolds were constructed from the representation theory of quantum groups by Reshetikhin-Turaev~\cite{RT} (using surgery presentations of 3-manifolds) and by Turaev-Viro~\cite{TV} (using triangulations of 3-manifolds). At the same time, Kuperberg~\cite{Ku} derived an invariant of 3-manifolds directly from a finite-dimensional involutory Hopf algebra   (using  Heegaard diagrams of 3-manifolds). Since then, these invariants have been generalized in several directions: using more elaborate algebraic input (such as finite tensor categories) or considering 3-manifolds with extra structure (such as bundle structures). In particular, following the program initiated by Turaev called homotopy quantum field theory, the second author constructed in~\cite{Vi3} an invariant of flat principal bundles over 3-manifolds from group graded involutory Hopf algebras, which generalizes  the Kuperberg invariant. In this paper, we generalize it further: we construct an invariant of flat principal 2-bundles over 3-manifolds from 2-group graded involutory Hopf algebras.

In higher gauge theory, principal $2$-bundles categorify ordinary principal bundles by replacing topological groups with topological $2$-groups (see Appendix~\ref{sect-principal-2-bundles}). Given a topological 2-group $\tg$, principal $\tg$-bundles over a topological space can be described by local transition data, namely by $\tg$-valued \v{C}ech cocycles (see Appendix~\ref{sect-bundle-to-cocycle}). Observe that if $\tg$ is discrete, then $\tg$-valued \v{C}ech cocycles are locally constant and principal $\tg$-bundles are automatically flat and have a unique flat structure (see Appendix~\ref{sect-flat}).

Crossed modules describe  (discrete) 2-groups and model connected homotopy 2\ti types. Recall that a crossed module is a group homomorphism $\CM \co E \to H$ together with a left action of $H$ on $E$ satisfying $H$-equivariance and the Peiffer identity. Every crossed module $\CM$ canonically determines a (strict and discrete) $2$-group~$\tg_{\CM}$ and has   
a classifying space $B\CM$ which is a connected homotopy $2$-type (meaning that $\pi_k(B\CM)=0$ for all $k \geq 3$).
It follows from Baez-Stevenson~\cite{BS} that there is a canonical bijection between equivalence classes of principal $\tg_{\CM}$-bundles (which are all flat) and homotopy classes of maps to $B\CM$. In particular, constructing an invariant of principal $\tg_{\CM}$-bundles over $3$-manifolds amounts to constructing an invariant of $\CM$-manifolds, which are pairs $(M,g)$ where $M$ is a $3$\ti manifold and $g$ is a homotopy class of maps $M \to B\CM$. 

To extend the Kuperberg invariant of 3-manifolds to an invariant of $\CM$-manifolds, we need to extend the Heegaard presentation of 3-manifolds. To this end, we first associate a crossed module (of groupoids) with any (generalized) Heegaard diagram~$D$ of a $3$-manifold $M$, and we express within this crossed module the taut identities for~$D$ (corresponding to the attaching maps of the 3-cells, see~\cite{Si}).  Then a $\CM$\ti labeling of $D$ is a morphism from this crossed module to $\CM$ which is compatible with the taut identities (see Section~\ref{sect-Xi-labelings}). We prove that  the set $[M,B\CM]$ of homotopy classes of maps $M \to B\CM$ is canonically identified with the set of gauge equivalence classes of $\CM$-labelings of~$D$ (see Theorem~\ref{thm-gauge-group-labelings}). 
Thus, $\CM$-Heegaard diagrams (which are Heegaard diagrams endowed with a $\CM$-labeling) give a combinatorial description of $\CM$-manifolds (and therefore of principal $\tg_{\CM}$-bundles over 3\ti manifolds). We illustrate this description with explicit computations for lens spaces and the Poincar\'e homology sphere. Furthermore, we give a colored version of the Reidemeister-Singer theorem:  two $\CM$-colored Heegaard diagrams represent equivalent $\CM$-manifolds if and only if they are related by a finite sequence of $\CM$\ti moves (see Theorem~\ref{thm-colored-Reidemeister}).

Hopf algebras graded by the 2-group  $\tg_{\CM}$, called Hopf $\CM$-coalgebras and introduced in~\cite{SV2}, serve as algebraic input for our construction of invariants of $\CM$-manifolds. These are Hopf $H$-coalgebras (in the sense of \cite{Vi2}) endowed with an action of~$\tg_\CM$. We prove that any finite-type involutory Hopf $\CM$-coalgebra $A$ (over a field $\kk$) has canonical two-sided integrals (see Lemma~\ref{lem-existence-Xi-integrals}) and we use them to assign a scalar to each $\CM$-colored Heegaard diagram (see Section~\ref{sect-def-invariant}). Our main result (Theorem~\ref{thm-colored-Kuperberg}) establishes that this scalar is independent of all choices and defines an invariant 
$$
K_A(M,g) \in \kk
$$
of $\CM$-manifolds, and thus an invariant of (flat) principal $\tg_{\CM}$-bundles over $3$-manifolds. We give examples showing that this invariant is nontrivial. In the special case where~$E=1$, it corresponds to the invariant of (flat) principal $H$-bundles over $3$-manifolds defined in~\cite{Vi3}, while for a trivializable  $\tg_{\CM}$-bundle it reduces to the Kuperberg invariant (associated with the neutral component of $A$) of the underlying 3-manifold.

The paper is organized as follows. In Section~\ref{sect-crossed-classifying}, we first recall background on crossed modules and their classifying spaces. Then we relate principal $\tg_{\CM}$\ti bundles to $\CM$-manifolds. In Section~\ref{sect-Heegaard-diagrams-maps}, we encode $\CM$-manifolds using $\CM$-Heegaard diagrams and their $\CM$-moves. In Section~\ref{sect-Hopf-crossed-module-coalgebras}, we review Hopf $\CM$-coalgebras and give some examples. In Section~\ref{sect-invariant-Xi-manifolds}, we construct from a finite-type  involutory Hopf $\CM$-coalgebra a topological  invariant of $\CM$-manifolds (via $\CM$-Heegaard diagrams).
Appendix~\ref{sect-2-bundles-appendix} reviews the principal $2$-bundles over topological spaces, including their \v{C}ech description and flatness. In Appendices~\ref{sect-crossed-modules-groupoids} and~\ref{sect-HVThm}, we collect necessary background on crossed modules and crossed complexes of groupoids, culminating in the homotopy classification theorem which relates maps to classifying spaces with morphisms of crossed complexes.

We fix throughout the paper a field~$\kk$. We denote by $|A|$ the cardinality of a finite set $A$. For sets $A$ and $B$, we denote by $ \mathrm{Map}(A,B)$ the set of maps $A \to B$. For topological spaces $X$ and $Y$, we denote by $[X,Y]$ the set of homotopy classes of continuous maps $X \to Y$. By an $n$-manifold, we mean a smooth manifold of dimension $n$.

\section{Crossed modules, crossed manifolds, and crossed bundles}\label{sect-crossed-classifying}

In this section, we first review crossed modules and their classifying spaces.  Then we introduce the equivalent notions of crossed manifolds (which are 3-manifolds endowed with a homotopy class  of maps to the classifying space of a crossed module) and crossed bundles (which are principal 2-bundles over 3-manifolds with structure 2-group induced by the crossed module).

\subsection{Crossed modules}\label{sect-crossed-modules-def}
Crossed modules encode (strict) 2-groups, see Appendix~\ref{sect-2-groups}.
A \emph{crossed module} is a group homomorphism $\CM \co E \to H$ together with a left  action of $H$ on $E$ (by group automorphisms), denoted by
$$
(x,e) \in H \times E  \mapsto \elact{x}{e} \in E,
$$ 
such that $\CM$ is equivariant with respect to the conjugation action of $H$ on itself and satisfies the Peiffer identity, that is, for all $x \in H$ and $e,f \in E$,
$$
\CM(\elact{x}{e})=x\CM(e)x^{-1} \quad \text{and} \quad \lact{{\CM(e)}}{\!f}=efe^{-1}.
$$
These axioms imply that the image $\Ima(\CM)$ is normal in $H$ and that the kernel $\Ker(\CM)$ is central in $E$ and is acted on trivially by $\Ima(\CM)$. In particular,  $\Ker(\CM)$ inherits an action of $H/\Ima(\CM)=\Coker(\CM)$.

A \emph{morphism} from a crossed module $\CM \co E \to H$ to a crossed module $\mu \co F \to K$ is a pair $(\psi \co E \to F,\varphi \co H \to K)$
of group homomorphisms such that
$$
\mu\bigl(\psi(e) \bigr)=\varphi \bigl( \CM(e) \bigr) \quad \text{and} \quad \psi(\elact{x}{e})=\lact{\varphi(x)}{\psi(e)}
$$
for all $e\in E$ and $x \in H$.

\subsection{Classifying spaces of crossed modules}\label{sect-crossed-modules-classifying-spaces}
A fundamental geometric example of a crossed module is due to Whitehead: if $(X,Y)$ is a pair of pointed topological spaces, then the homotopy boundary map $$\Pi_2(X,Y)=\partial \co \pi _{2}(X,Y)\rightarrow \pi _{1}(Y),$$
together with the standard action of $\pi _{1}(Y)$ on $\pi _{2}(X,Y)$,  is a crossed module.

Conversely, there exists a classifying space functor~$B$ (see \cite{BH1}) that  assigns to each crossed module $\CM \co E \to H$
a connected,  reduced\footnote{A CW-complex is \emph{reduced} if it has a single 0-cell, which then serves as a basepoint.} CW-complex
$B\CM$, containing a canonical subcomplex $BH$ which is a classifying space of the group $H$, such that
    $$
    \Pi_2(B\CM,BH)=\CM.
    $$
The classifying space $B\CM$ is a homotopy 2-type:
$$
\pi_1(B\CM)=\Coker(\CM), \quad \pi_2(B\CM)=\Ker(\CM), \quad \pi_k(B\CM)=0 \quad \text{for $k \geq 3$.}
$$
If $X$ is a reduced CW-complex, then there is a map $X \to B\Pi_2(X,X^1)$ inducing isomorphisms on $\pi_1$ and $\pi_2$, where $X^1$ is the 1-skeleton of $X$.
It follows that crossed modules model all pointed connected homotopy 2-types (as originally proved by MacLane and Whitehead \cite{MLW}).

\subsection{Examples}\label{sect-crossed-modules-ex}
1. Given any  normal subgroup $E$ of a group $H$, the inclusion $E\hookrightarrow H$ is a crossed module with the conjugation action of $H$ on $E$. Its classifying space is an Eilenberg-MacLane space of type $K(H/E,1)$.

2. If $E$ is an abelian group, then the trivial map $E \to 1$ is a crossed module. Its classifying space is an Eilenberg-MacLane space of type $K(E,2)$.

3. Let $H$ be a group acting (by group automorphisms) on an abelian group $E$. Then the trivial morphism $\CM \co E \to H$, defined by $\CM(e)=1$ for all $e \in E$, is a crossed module. Note that the action of $\pi_1(B\CM)=H$ on $\pi_2(B\CM)=E$ is the given action of $H$ on $E$, and so $B\CM$ is homotopy equivalent to $K(H,1) \times K(E,2)$ if and only if this action is trivial.

4. For any group $E$, the homomorphism $\CM\co E \to \Aut(E)$ sending any element of $E$ to the corresponding inner automorphism is a crossed module. Note that $\pi_1(B\CM)=\mathrm{Out}(E)$ and $\pi_2(B\CM)=Z(E)$.

\subsection{Crossed manifolds}\label{sect-Xi-manifolds} 
Let $\CM$ be a crossed module. A \emph{$\CM$-manifold} is a pair $(M,g)$ where $M$ is a closed connected oriented 3-manifold and $g  \in [M,B\CM]$ is a  homotopy class  of continuous maps $M \to B\chi$. 

Two $\CM$-manifolds $(M,g)$ and $(M',g')$ are \emph{equivalent} if there is an orientation preserving diffeomorphism $\psi\co M \to M'$ such that $g'\psi=g$.

By a \emph{topological invariant} of $\CM$-manifolds, we mean a quantity associated to $\CM$-manifolds which remains unchanged under the above equivalence relation.

\subsection{Crossed  bundles}\label{sect-Xi-bundles}
Let $\CM$ be a crossed module. Denote by $\tg_\CM$ its associated (discrete) $2$-group (see Appendix~\ref{sect-2-groups}). 
A \emph{$\CM$-bundle} is a pair $(M,\pi)$ where $M$ is a closed connected oriented 3-manifold and $\pi\co P \to \widebar{M}$ is a principal $\tg_\CM$-bundle over~$M$ (see Appendix~\ref{sect-principal-2-bundles}). Note that any  principal $\tg_\CM$-bundle is flat and has a unique flat structure (see Appendix~\ref{sect-flat}).

Two $\CM$-bundles $(M,\pi)$ and $(M',\pi')$ are \emph{equivalent} if there is an orientation preserving diffeomorphism $\psi\co M \to M'$ such that the principal $\tg_\CM$-bundles $\psi_*\pi$ and~$\pi'$ over $M'$ are equivalent (see Appendix~\ref{sect-principal-2-bundles}). Here the principal $\tg_\CM$-bundle $\psi_*\pi$ is defined as the composition of $\pi\co P \to \widebar{M}$ with the continuous functor $(\psi,\psi)\co \widebar{M}\to \widebar{M'}$  (see Appendix~\ref{sect-2-spaces}).

By a \emph{topological invariant} of $\CM$-bundles, we mean a quantity associated to $\CM$\ti bun\-dles which remains unchanged under the above equivalence relation.

\subsection{Crossed manifolds vs crossed bundles}\label{sect-flat-2-bundles}
Let $\CM$ be a crossed module and~$\tg_\CM$ be its associated $2$-group.
Given a 3-manifold $M$, denote by $\pp(M,\tg_\chi)$ the set of equivalence classes of (flat) principal $\tg_\chi$-bundles over~$M$. Recall that $[M,B\chi]$ denotes the set of homotopy classes of continuous maps $M \to B\chi$. Since $M$ is a paracompact Hausdorff space admitting good covers, it follows from Appendix~\ref{sect-flat} that there is a canonical bijection
$$
\pp(M,\tg_\chi) \cong [M,B\chi].
$$
This induces a canonical bijection between the set of equivalence classes of $\CM$-bundles and the set of equivalence classes of $\CM$-manifolds. In particular, constructing a topological invariant of $\CM$-bundles corresponds to constructing a topological invariant of $\CM$-manifolds.

\section{Colored Heegaard diagrams}\label{sect-Heegaard-diagrams-maps} 

Throughout this section, $\CM \co E \to H$ is a crossed module. We introduce the notion of a $\CM$-labeling of a (generalized) Heegaard diagram of a 3-manifold. To this end, we first associate a crossed module (of groupoids) with a Heegaard diagram and provide an algorithm to extract taut identities. We then prove that gauge equivalence classes of $\CM$-labelings encode 
homotopy classes of maps from 3\ti manifolds to the classifying space of $\CM$ (see Theorem~\ref{thm-gauge-group-labelings}). In particular, $\CM$-Heegaard diagrams  (which are Heegaard diagrams endowed with a $\CM$-labeling)  give a combinatorial description of $\CM$-manifolds (and hence  of $\CM$-bundles). Finally, we prove  that two $\CM$\ti Heegaard diagrams represent equivalent $\CM$-manifolds if and only if they are related by a finite sequence of $\CM$-colored Heegaard moves (see Theorem~\ref{thm-colored-Reidemeister}).

\subsection{Heegaard diagrams}\label{sect-Heegaard-diag}
By a  \emph{planar surface}, we mean a 2-dimensional manifold which can be embedded in the plane. 
A \emph{Heegaard diagram} is a triple $D=(\Sigma,\up,\low)$, where:
\begin{itemize}
\item $\Sigma$ is a closed, connected, oriented surface,
\item  $\up$  is a finite set of pairwise disjoint simple closed curves on $\Sigma$, called the \emph{upper circles},
\item  $\low$  is a finite set of pairwise disjoint simple closed curves on $\Sigma$, called the \emph{lower circles},
\end{itemize}
such that  both $\Sigma \setminus \up$ and $\Sigma \setminus \low$  are a disjoint union of planar surfaces and 
the upper and lower circles intersect transversely at a finite set of points, called the \emph{intersection points} of $D$.
For $u \in \up$, we denote by $|u|$ the number of intersection points of $D$ lying in $u$. Likewise, for $l \in \low$, we denote by $|l|$ the number of intersection points of $D$ lying in $l$. Then the total number of intersection points of $D$ is computed by
$$
\sum_{u \in \up} |u|=\sum_{l \in \low} |l|.
$$

In the literature, it is often also required that both $\Sigma \setminus \up$ and $\Sigma \setminus \low$ are connected (or equivalently that the number of upper circles and the number of lower circles are equal to the genus of $\Sigma$). The (generalized) definition given above follows that of \cite{Ku} and is necessary for our purposes (see Section~\ref{sect-rem-connected}).

A Heegaard diagram is \emph{oriented} if all its upper and lower circles are oriented. The intersection points of an oriented Heegaard diagram $D=(\Sigma,\up,\low)$ have a sign defined as follows.  An intersection point $s$ between an upper circle $u$ and a lower circle $l$ is \emph{positive} if $u$ intersects $l$ positively at $s$ and is \emph{negative} otherwise: 
\begin{center}
\begin{minipage}[c]{70pt}
\begin{center}
   \psfrag{l}[Bl][Bl]{\scalebox{1}{$l$}}
   \psfrag{s}[Bc][Bc]{\scalebox{1}{$s$}}
   \psfrag{u}[Br][Br]{\scalebox{1}{$u$}}
   \psfrag{S}[Br][Br]{\scalebox{1}{$\Sigma$}}
\rsdraw{.45}{.9}{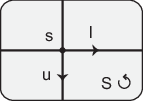}  \\[.5em]
$s$ positive
\end{center}
\end{minipage}
\hspace*{4em}
\begin{minipage}[c]{75pt}
\begin{center}
   \psfrag{l}[Bl][Bl]{\scalebox{1}{$l$}}
   \psfrag{s}[Bc][Bc]{\scalebox{1}{$s$}}
   \psfrag{u}[Br][Br]{\scalebox{1}{$u$}}
   \psfrag{S}[Br][Br]{\scalebox{1}{$\Sigma$}} 
\rsdraw{.45}{.9}{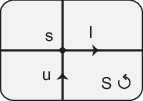}  . \\[.5em]
$s$ negative \phantom{.}
\end{center}
\end{minipage}
\end{center}
\smallskip

A Heegaard diagram is  \emph{pointed} if each lower circle is equipped with a basepoint distinct from the intersection points.

\subsection{Crossed modules from Heegaard diagrams}\label{sect-CM-from-Heegaard}
We associate to any  oriented pointed Heegaard diagram $D=(\Sigma,\up,\low)$ a crossed module of groupoids (see Appendix~\ref{sect-crossed-modules-groupoids}) in the following way.

Any upper circle $u \in \up$ determines two (possibly coinciding) connected components  $c_-^u$ and $c_+^u$ of $\Sigma \setminus \up$ which are adjacent to $u$ and where the subscript $\pm$ is determined by the orientation of $u$ and $\Sigma$ as follows:
$$
   \psfrag{o}[Bl][Bl]{\scalebox{1}{$c^u_+$}}
   \psfrag{c}[Br][Br]{\scalebox{1}{$c^u_-$}}
   \psfrag{u}[Br][Br]{\scalebox{1}{$u$}}
   \psfrag{S}[Br][Br]{\scalebox{1}{$\Sigma$}}
\rsdraw{.45}{.9}{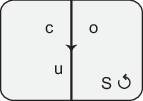} \; .
$$
Consider the oriented graph $\Gamma_{\Sigma,\up}$ whose set of vertices is the set $\pi_0(\Sigma \setminus \up)$ of connected components of  $\Sigma \setminus \up$ and whose set of edges is $\up$, where each upper circle~$u$ is viewed as an oriented edge from $c^u_-$ to $c^u_+$. Note that $\Gamma_{\Sigma,\up}$ is connected.
Denote by $\mathcal{F}_{\Sigma,\up}$ the free groupoid on $\Gamma_{\Sigma,\up}$ (see Appendix~\ref{sect-free-groupoids}).

Each lower circle $l$ determines an endomorphism $\omega_l$ of $\mathcal{F}_{\Sigma,\up}$ as follows. Let $u_1, \dots, u_k$ be the upper circles encountered by $l$  when making  a round trip along~$l$ starting from its basepoint and following its orientation, and let $\nu_1, \dots, \nu_k$ be the signs of the corresponding intersection points (see Section~\ref{sect-Heegaard-diag}). Denote by $c_l$ the connected component  of $\Sigma \setminus \up$ containing the basepoint of $l$.
Then
$$ 
\omega_l=u_1^{\nu_1} \cdots u_k^{\nu_k}  \in \mathcal{F}_{\Sigma,\up}(c_l).
$$
By Appendix~\ref{sect-free-crossed-modules-over-groupoids}, the induced map $\omega \co \low \to \mathcal{F}_{\Sigma,\up}$ gives rise to the free crossed module on $\omega$, denoted here as:
$$
\nu_D\co \mathcal{F}_D=\{\mathcal{F}_D(c)\}_{c \in \pi_0(\Sigma \setminus \up)} \to \mathcal{F}_{\Sigma,\up}.
$$
Recall that $\mathcal{F}_D$ is a quotient of the free groupoid $\mathcal{P}_D=\{\mathcal{P}_D(c)\}_{c \in \pi_0(\Sigma \setminus \up)}$, where $\mathcal{P}_D(c)$ is the free group on the set of pairs  $(x,l)$ with $l  \in \low$ and $x \in \mathcal{F}_{\Sigma,\up}(c,c_l)$. Explicitly, for $c\in \pi_0(\Sigma \setminus \up)$, the group $\mathcal{F}_D(c)$ is the quotient of $\mathcal{P}_D(c)$ by its subgroup generated by the Peiffer commutators (see Appendix~\ref{sect-free-crossed-modules-over-groupoids}).
Let us denote by $\langle f\rangle \in \mathcal{F}_D(c)$  the class of an element $f \in \mathcal{P}_D(c)$. Then
$$
\nu_D(\langle x,l\rangle)=x\omega_l x^{-1} \quad \text{and} \quad \lact{y}{\langle x,l\rangle}=\langle yx,l\rangle
$$
for all  generators $(x,l)$ of $\mathcal{P}_D(c)$ and all $y \in \mathcal{F}_{\Sigma,\up}$ with $t(y)=s(x)$. 
By construction, the crossed module $\nu_D\co \mathcal{F}_D  \to \mathcal{F}_{\Sigma,\up}$ is free  
with free basis $ \{\mathcal{B}_2=\{b_l\}_{l \in \low},\, \mathcal{B}_1=\up\}$, where $b_l=\langle (1_{c_l},l)\rangle \in \mathcal{F}_D(c_l)$.

Consider the 2-dimensional CW-complex $X_D$ associated with the above presentation of $\nu_D$. Its 0-skeleton is the set $X_D^0=\pi_0(\Sigma \setminus \up)$, its 1-skeleton~$X_D^1$ is the geometric realization of the graph  $\Gamma_{\Sigma,\up}$, and $X_D$ is obtained by gluing for each lower circle $l$ a 2-cell to $X_D^1$ according to $\omega_l$. Note that $X_D$ is connected and that there is a canonical isomorphism of crossed modules (of groupoids) between $\nu_D\co \mathcal{F}_D  \to \mathcal{F}_{\Sigma,\up}$ and the fundamental crossed module 
$$
\pi_2(X_D,X_D^1)=\{\pi_2(X_D,X_D^1,c)\}_{c \in X_D^0}  \xrightarrow{ \partial_2}   \pi_1(X_D^1)=\{\pi_1(X_D^1,c)\}_{c \in X_D^0}
$$
of the triple $(X_D,X_D^1,X_D^0)$ induced by the usual boundary maps.

\subsection{Heegaard diagrams of 3-manifolds}\label{sect-Heegaard-diag-3man}
Let $\Sigma$ be a closed connected oriented surface and  $\low$  be a finite set of pairwise disjoint simple closed curves on $\Sigma$ 
such that   $\Sigma \setminus \low$ is a disjoint union of planar surfaces. The pair $(\Sigma,\low)$ gives rise to an oriented handlebody $\mathcal{H}_{\Sigma,\low}$ with boundary $\partial(\mathcal{H}_{\Sigma,\low})=\Sigma$  as follows. Consider the 3-manifold $\Sigma \times [0,1]$. Glue a 2-handle along a tubular neighborhood of each circle $l \times \{0\}$ with $l \in \low$.  The result is a compact connected 3-manifold whose boundary is $\Sigma \times \{1\} \cong \Sigma$ plus a  disjoint union of $| \pi_0(\Sigma \setminus \low)|$ spheres. We eliminate these spherical boundary components by gluing in 3-balls to obtain $\mathcal{H}_{\Sigma,\low}$. We endow $\mathcal{H}_{\Sigma,\low}$ with the orientation extending the product orientation of $\Sigma \times [0,1]$. 

Any Heegaard diagram $D=(\Sigma,\up,\low)$ gives rise to the closed connected oriented 3-manifold
$$
M_D=\mathcal{H}_{\Sigma,\low}\bigcup_\Sigma (-\mathcal{H}_{\Sigma,\up}),
$$ 
where $-\mathcal{H}_{\Sigma,\up}$ denotes $\mathcal{H}_{\Sigma,\up}$ with the opposite orientation. In particular, the surface $\Sigma$ is a splitting surface for $M_D$, meaning that it cuts $M_D$ into two handlebodies.

A \emph{Heegaard diagram of a  closed connected oriented 3-manifold} $M$ is a Heegaard diagram $D=(\Sigma,\up,\low)$ together with an embedding of  $\Sigma$ in $M$ which extends to an orientation preserving diffeomorphism $M_D \cong M$. 

Any closed connected oriented 3-manifold has a Heegaard diagram. Moreover, the Reidemeister-Singer theorem asserts that two Heegaard diagrams give rise to diffeomorphic oriented 3-manifolds if and only if one can be obtained from the other by a finite sequence of Heegaard moves, see Section~\ref{sect-Xi-moves}.

\subsection{Taut identities of Heegaard diagrams}\label{sect-Taut}
Let $D=(\Sigma,\up,\low)$  be an oriented pointed Heegaard diagram. Denote by $M=M_D$ the  closed connected oriented 3-manifold associated with $D$ (see Section~\ref{sect-Heegaard-diag-3man}).
Following~\cite[Chapter 8]{HMS}, the  2-dimensional CW-complex $X_D$  associated with $D$ (see Section~\ref{sect-CM-from-Heegaard}) embeds in $M$ in such a way that $M\setminus X_D$ is a disjoint union of open 3-balls. This gives rise to a CW-decomposition of $M$ with $2$-skeleton~$X_D$. 
In particular, this decomposition of~$M$ has $| \pi_0(\Sigma \setminus \up)|=1+|\up|-\mathrm{genus}(\Sigma)$ 0-cells,  $|\up|$ 1-cells, $|\low|$ 2-cells, and $| \pi_0(\Sigma \setminus \low)|=1+|\low|-\mathrm{genus}(\Sigma)$ 3-cells. 
Consider the free crossed complex (of groupoids)
$$
\Pi M^\ast=\bigl ( \cdots \to 1 \to 1 \to  \pi_3(M,M^2) \xrightarrow{\partial_3}  \pi_2(M^2,M^1) \xrightarrow{\partial_2}   \pi_1(M^1)   \bigr).
$$
associated with the skeletal filtration of $M^*=\{M^0 \subseteq M^1 \subseteq M^2 \subseteq M\}$ of $M$ (see Appendix~\ref{exa-crossed-complexes-of-CW-complexes}). 
In particular, $\pi_3(M,M^2)$ is a free $\pi_1(M)$-module with free basis $\mathcal{B}_3=\{h_\sigma\}_{\sigma \in \pi_0(\Sigma \setminus \low)}$ obtained by picking a basepoint on the boundary of each  3-cell and then taking the homotopy classes of the characteristic maps of the 3-cells of~$M$.
It follows from Section~\ref{sect-CM-from-Heegaard} that  the cellular homeomorphism $X_D\cong M^2$ induces an 
isomorphism of crossed modules (of groupoids) 
$$
\xymatrix@R=.7cm @C=.9cm{
\pi_2(M^2,M^1) \ar@{->}[r]^-{\partial_2}\ar@{->}[d]_-{\eta}^-{\mathbin{\rotatebox[origin=c]{-90}{\,$\simeq$}}}      & \pi_1(M^1) \ar@{->}[d]^-{\mathbin{\rotatebox[origin=c]{-90}{\,$\simeq$}}}     \\
 \mathcal{F}_D \ar@{->}[r]^-{\nu_D}  &  \mathcal{F}_{\Sigma,\up}.
}
$$
Recall that $\mathcal{F}_D$ is the quotient of the free groupoid $\mathcal{P}_D$ by the Peiffer commutators and that the class of an element $f \in \mathcal{P}_D$ is denoted by $\langle f\rangle \in \mathcal{F}_D$.
By a \emph{representative set of taut identities} for $D$, we mean a family  $\tau=\{\tau_\sigma\}_{\sigma \in \pi_0(\Sigma \setminus \low)}$ of elements of $\mathcal{P}_D$ such that for all $\sigma \in \pi_0(\Sigma \setminus \low)$,
$$
\langle\tau_\sigma\rangle=\eta\partial_3(h_\sigma) \in \mathcal{F}_D.
$$ 
Note that any two taut identities associated with $\sigma$ are related by a Peiffer commutator (since they represent the same class in $\mathcal{F}_D$) and by the action of $\mathcal{F}_{\Sigma,\up}$ (due to the choices of basepoints for the  3-cells to define $\mathcal{B}_3$).

The following procedure explains how to find a taut identity associated with a component $\sigma \in \pi_0(\Sigma \setminus \low)$. Pick a point $m$ in the interior of $\sigma$ distinct from the upper circles (which serves as a basepoint for the 3-cell associated with $\sigma$).
Each connected component~$b$ of the boundary $\partial \sigma$ of $\sigma$ comes from the cutting of $\Sigma$ along some lower circle $l_b$. It inherits an orientation and a basepoint from $l_b$. Pick an arc~$\gamma_b$ in $\sigma$ starting at $m$  and ending at the basepoint of $b$:
$$
\psfrag{E}[Bc][Bc]{\scalebox{1.5}{$\leftarrow$}}
\psfrag{a}[Bc][Bc]{\scalebox{.9}{{\color{red}$\gamma_{b}$}}}
\psfrag{b}[Br][Br]{\scalebox{.9}{$b$}}
\psfrag{h}[Br][Br]{\scalebox{.9}{$l_b$}}
\psfrag{m}[Bc][Bc]{\scalebox{.9}{{\color{red}$m$}}}
\psfrag{S}[Bc][Bc]{\scalebox{.9}{$\Sigma$}}
\psfrag{R}[Bc][Bc]{\scalebox{.9}{$\sigma$}}
\rsdraw{.45}{.9}{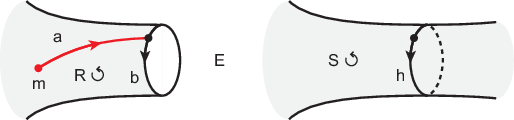} \;.
$$
We require that the arcs $\{\gamma_b\}_{b \in \pi_0(\partial \sigma)}$  intersect the upper  circles transversally and intersect each other only at their initial point $m$.  Set $\varepsilon_b=+1$ if the orientation of $b$ coincides with that induced\footnote{We use the ``outward vector first'' convention for the induced orientation of the boundary.} by the orientation of~$\sigma$ (inherited from $\Sigma$) and $\varepsilon_b=-1$ otherwise. Denote by $c_m$ the connected component of $\Sigma \setminus \up$ containing $m$. Recall that 
$c_{l_b}$  denotes the connected component  of $\Sigma \setminus \up$ containing the basepoint of $l_b$.
Set  
$$ 
r_b =u_1^{\nu_1} \cdots u_s^{\nu_s}  \in \mathcal{F}_{\Sigma,\up}(c_m,c_{l_b})
$$
where $u_1, \dots, u_s$ are the upper circles encountered by $\gamma_b$ when traveling along it from~$m$, and $\nu_1, \dots, \nu_s$
are the signs of the intersection points of the upper circles with~$\gamma_b$:  
\begin{center}
\begin{minipage}[c]{70pt}
\begin{center}
   \psfrag{l}[Bl][Bl]{\scalebox{1}{{\color{red}$\gamma_b$}}}
   \psfrag{u}[Br][Br]{\scalebox{1}{$u_i$}}
   \psfrag{S}[Br][Br]{\scalebox{1}{$\sigma$}}
\rsdraw{.45}{.9}{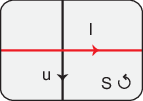}  \\[.5em]
$\nu_i=+1$  
\end{center}
\end{minipage}
\hspace*{4em}
\begin{minipage}[c]{75pt}
\begin{center}
   \psfrag{l}[Bl][Bl]{\scalebox{1}{{\color{red}$\gamma_b$}}}
   \psfrag{u}[Br][Br]{\scalebox{1}{$u_i$}}
   \psfrag{S}[Br][Br]{\scalebox{1}{$\sigma$}} 
\rsdraw{.45}{.9}{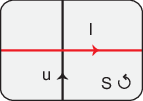}  . \\[.5em]
$\nu_i=-1$ \phantom{.}
\end{center}
\end{minipage}
\end{center}
\smallskip
Enumerate the boundary components of $\sigma$ by $b_1, \dots,b_n$
so that $\gamma_{b_1},\dots, \gamma_{b_n}$ are the arcs successively encountered while traversing a small loop negatively encircling $m$ and starting from any point on that loop:
$$
\psfrag{a}[Bc][Bc]{\scalebox{.9}{{\color{red}$\gamma_{b_1}$}}}
\psfrag{c}[Br][Br]{\scalebox{.9}{{\color{red}$\gamma_{b_2}$}}}
\psfrag{e}[Bl][Bl]{\scalebox{.9}{{\color{red}$\gamma_{b_n}$}}}
\psfrag{m}[Bc][Bc]{\scalebox{.9}{{\color{red}$m$}}}
\psfrag{S}[Bc][Bc]{\scalebox{.9}{$\sigma$}}
\rsdraw{.45}{.9}{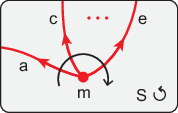} \;.
$$
Then
$$
\tau_\sigma= \prod_{k=1}^n (r_{b_k},l_{b_k})^{\varepsilon_{b_k}} \in \mathcal{P}_D(c_m).
$$
Note that any other choice of the point $m$, of the curves $\gamma_b$, and of the point on the loop used to enumerate the boundary components of $\sigma$ gives rise to another taut relation which is related to the above by Peiffer commutators and the action of~$\mathcal{F}_{\Sigma,\up}$. We illustrate this procedure in the next two examples.

\subsection{Example}(Poincar\'e's homology sphere)\label{ex-Poincare}
Consider the Heegaard diagram on a genus 2 surface $\Sigma$ used by Poincar\'e to define  his celebrated homology 3-sphere~$\mathbb{P}$  (see \cite{Po}). It has two upper circles $u_1,u_2$ and two lower circles $l_1,l_2$. The only connected component $\sigma$ of~$\Sigma \setminus \{l_1,l_2\}$  is the planar surface $\sigma$ depicted as:
$$
\psfrag{1}[Bc][Bc]{\scalebox{.9}{$1$}}
\psfrag{2}[Bc][Bc]{\scalebox{.9}{$2$}}
\psfrag{3}[Bc][Bc]{\scalebox{.9}{$3$}}
\psfrag{4}[Bc][Bc]{\scalebox{.9}{$4$}}
\psfrag{5}[Bc][Bc]{\scalebox{.9}{$5$}}
\psfrag{6}[Bc][Bc]{\scalebox{.9}{$6$}}
\psfrag{7}[Bc][Bc]{\scalebox{.9}{$7$}}
\psfrag{a}[Bc][Bc]{\scalebox{.9}{$a$}}
\psfrag{b}[Bc][Bc]{\scalebox{.9}{$b$}}
\psfrag{c}[Bc][Bc]{\scalebox{.9}{$c$}}
\psfrag{d}[Bc][Bc]{\scalebox{.9}{$d$}}
\psfrag{e}[Bc][Bc]{\scalebox{.9}{$e$}}
\psfrag{r}[Bc][Bc]{\scalebox{.9}{{\color{red}$\gamma_1$}}}
\psfrag{o}[Bc][Bc]{\scalebox{.9}{{\color{red}$\gamma_2$}}}
\psfrag{n}[Bc][Bc]{\scalebox{.9}{{\color{red}$\gamma_3$}}}
\psfrag{z}[Bc][Bc]{\scalebox{.9}{{\color{red}$\gamma_4$}}}
\psfrag{m}[Bc][Bc]{\scalebox{.9}{{\color{red}$m$}}}
\psfrag{l}[Bc][Bc]{\scalebox{.9}{$l_1^-$}}
\psfrag{k}[Bc][Bc]{\scalebox{.9}{$l_1^+$}}
\psfrag{x}[Bc][Bc]{\scalebox{.9}{$l_2^+$}}
\psfrag{y}[Bc][Bc]{\scalebox{.9}{$l_2^-$}}
\psfrag{u}[Bc][Bc]{\scalebox{.9}{$u_1$}}
\psfrag{v}[Bc][Bc]{\scalebox{.9}{$u_2$}}
\psfrag{S}[Bc][Bc]{\scalebox{.9}{$\sigma$}}
\rsdraw{.45}{.9}{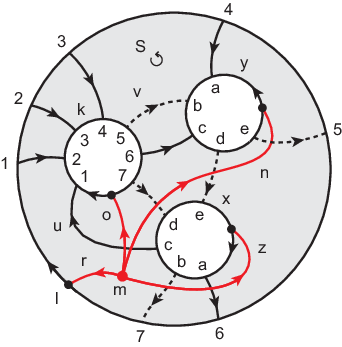} \;.
$$
The arrows and the dots represent the orientations and  basepoints of the circles. The points $1,\cdots, 7$ (respectively, $a, \cdots, e$) are the intersection points between the upper circles and the lower circle $l_1$ (respectively, $l_2$). The four connected components of the boundary of $\sigma$  are denoted $l_1^+,l_1^-,l_2^+,l_2^-$ in such a way that~$l_i^\pm$ arises from cutting~$\Sigma$ along $l_i$ and $\varepsilon_{l_i^\pm}=\pm 1$. The elements associated with the lower circles (as in Section~\ref{sect-CM-from-Heegaard}) are
$$
\omega_{l_1}=u_1^4 u_2^{-1} u_1^{-1} u_2^{-1} \quad  \text{and} \quad \omega_{l_2}=u_1^{-1} u_2^{-1}u_1^{-1} u_2^2.
$$
For the choice of  $\gamma$-curves depicted in red, we get
$$
r_{l_1^+}=u_1^{-1}, \quad r_{l_1^-}=1,  \quad r_{l_2^+}=u_2u_1, \quad r_{l_2^-}=u_1^{-1}u_2^3.
$$
Enumerating the curves from $\gamma_1$ to $\gamma_4$ (in accordance with the opposite orientation of $\sigma$), we obtain the taut identity
$$
\tau_\sigma= (1,l_1)^{-1} (u_1^{-1},l_1)  (u_1^{-1}u_2^3,l_2)^{-1}(u_2u_1,l_2).
$$

\subsection{Example}(Lens spaces)\label{ex-Lens}
Let $p$ and $q$ be coprime integers with $1 \leq q < p$. The lens space $L(p,q)$  has an oriented pointed Heegaard diagram on a genus 1 surface~$\Sigma$ with one upper circle $u$ and one lower circle $l$. The only connected component $\sigma$ of~$\Sigma \setminus \{l\}$  is the planar surface $\sigma$ depicted as:
$$
\psfrag{1}[Bc][Bc]{\scalebox{.9}{$1$}}
\psfrag{2}[Bc][Bc]{\scalebox{.9}{$2$}}
\psfrag{p}[Bc][Bc]{\scalebox{.9}{$p$}}
\psfrag{q}[Bc][Bc]{\scalebox{.9}{$q$}}
\psfrag{t}[Br][Br]{\scalebox{.9}{$q+1$}}
\psfrag{r}[Br][Br]{\scalebox{.9}{$q+2$}}
\psfrag{b}[Bc][Bc]{\scalebox{.9}{$p-q$}}
\psfrag{l}[Bc][Bc]{\scalebox{.9}{$l^-$}}
\psfrag{k}[Bc][Bc]{\scalebox{.9}{$l^+$}}
\psfrag{u}[Bc][Bc]{\scalebox{.9}{$u$}}
\psfrag{S}[Bc][Bc]{\scalebox{.9}{$\sigma$}}
\rsdraw{.45}{.9}{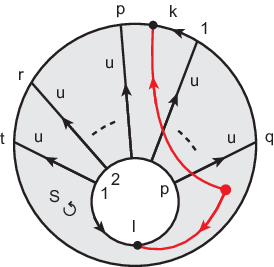} \;.
$$
The arrows and the dots represent the orientations and  basepoints of the circles. The points $1,\cdots, p$ are the intersection points between $u$ and $l$.  The two connected components of the boundary of $\sigma$  are denoted $l^+$ and $l^-$ so that 
$\varepsilon_{l^\pm}=\pm 1$. The element associated with the lower circle (as in Section~\ref{sect-CM-from-Heegaard}) is
$$
\omega_{l}=u^p.
$$
For the choice of  $\gamma$-curves depicted in red,  we obtain the taut identity
$$
\tau_\sigma= (1,l)^{-1}(u^q,l).
$$

\subsection{Labelings of Heegaard diagrams}\label{sect-Xi-labelings}
Recall that  $\CM \co E \to H$ is a crossed module of groups (and so of groupoids by viewing any group as a groupoid with one object).
In this section, we introduce $\CM$-labeled Heegaard diagrams and prove that they encode homotopy classes of maps from closed 3-manifolds to the classifying space $B\CM$ of $\CM$. 

Let $D=(\Sigma,\up,\low)$ be  an oriented pointed Heegaard diagram. 
Consider the groupoid $\mathcal{F}_{\Sigma,\up}$ freely generated by $\up$ (see Section~\ref{sect-CM-from-Heegaard}). Any map $\alpha \co \up \to H$  determines a unique groupoid morphism $\tilde{\alpha} \co \mathcal{F}_{\Sigma,\up} \to H$  such that $\tilde{\alpha}(u)=\alpha(u)$ for all $u \in \up$. 

A \emph{$\CM$-labeling} of $D$ is a pair  $(\alpha, \beta) \in \mathrm{Map}(\up,H) \times \mathrm{Map}(\low,E)$ satisfying the following two conditions:
\begin{enumerate}
\labeli
\item For any lower circle $l \in \low$,
$$
\CM(\beta(l))=\tilde{\alpha}(\omega_l)
$$  
where $\omega_l \in \mathcal{F}_{\Sigma,\up}$ is defined in Section~\ref{sect-CM-from-Heegaard}.
\item For every element $\prod_{k=1}^n (r_k,l_k)^{\varepsilon_k}$ of a representative set of taut identities for $D$,
    $$
    \prod_{k=1}^{n}\left(\lact{\tilde{\alpha}(r_k)}{\beta(l_k)}\right)^{\!\varepsilon_k} =1.
    $$
\end{enumerate}
We prove below (see Theorem~\ref{thm-gauge-group-labelings}) that this definition does not depend on the choice of the representative set of taut identities in (ii). For examples of $\CM$-labelings, see Sections~\ref{ex-Lens-labelings} and~\ref{ex-Poincare-labelings} below.

The \emph{gauge group} of  $D$ is $$\tg_D=\mathrm{Map}(\pi_0(\Sigma\setminus \up),H) \ltimes \mathrm{Map}(\up,E)$$
with product defined by
$(a,d) (a',d')=(a'',d'')$ where
$$
a''(c)=a(c)a'(c) \quad \text{and} \quad 
d''(u)= \Bigl (\lact{a'(c^u_-)^{-1}}{\!\!d(u)}\!\Bigr ) d'(u)
$$ 
for all $c \in \pi_0(\Sigma\setminus \up)$  and  $u \in \up$. (The notation $c^{\pm}_u$ and $c_l$ associated with upper and lower circles is defined in Section~\ref{sect-CM-from-Heegaard}.)  
The group $\tg_D$ acts (on the left) on the set of $\CM$-labelings of~$D$ as follows: for any  $(a,d) \in \tg_D$ and any $\CM$-labeling $(\alpha,\beta)$ of~$D$, the $\CM$-labeling $(\alpha',\beta')=(a,d) \cdot (\alpha,\beta)$ of $D$ is defined for all $u \in \up$ and $l \in \low$ by
$$
\alpha'(u) = a(c^u_-) \CM(d(u)) \alpha(u) a(c^u_+)^{-1} \quad \text{and} \quad
\beta'(l) =\lact{a(c_l)}{\!{\big(d_\alpha(\omega_l)\beta(l)\big)}}.
$$
Here $d_\alpha$ is the derivation over the groupoid morphism $\tilde{\alpha}\co  \mathcal{F}_{\Sigma,\up} \to H$ extending $d$, that is, the (unique) map $d_\alpha \co \mathcal{F}_{\Sigma,\up} \to E$ such that 
$$
d_\alpha(u)=d(u) \quad \text{and} \quad d_\alpha(ww')=d_\alpha(w)\, \lact{\tilde{\alpha}(w)}{\!d_\alpha(w')}
$$
for all $u \in \up$ and $w,w'\in \mathcal{F}_{\Sigma,\up}$ with $t(w)=s(w')$. Note that  $d_\alpha(\emptyset)=1$ and $d_\alpha(u^{-1})=\lact{\alpha(u)^{-1}}{\!\!\left (d(u)^{-1}\right)}$.

Two $\CM$-labelings of $D$ are \emph{gauge equivalent} if they belong to the same orbit under the action of the gauge group $\tg_D$.

\begin{thm}\label{thm-gauge-group-labelings}
Let $M$ be a closed connected oriented 3-manifold and let $D$ be  an oriented pointed Heegaard diagram of $M$. 
The notion of a $\CM$-labeling of $D$ is well-defined, as well as the action of the gauge group~$\tg_D$ on the $\CM$-labelings of~$D$.
Moreover, there is a canonical bijection  
$$
[M,B\CM] \cong  \{ \text{$\CM$-labelings of $D$} \}/\tg_D
$$
between the set of homotopy classes of maps $M \to B\CM$ and the set of gauge equivalence classes of $\CM$-labelings of $D$. 
\end{thm}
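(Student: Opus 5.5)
The plan is to deduce the statement from the homotopy classification theorem for crossed complexes recalled in Appendix~\ref{sect-HVThm}. That theorem gives a natural bijection
$$
[M,B\CM]\cong [\Pi M^\ast, \CM]
$$
for the CW-structure $M^\ast$ of Section~\ref{sect-Taut}. Here $\CM$ is viewed as a crossed complex concentrated in degrees $1$ and $2$, and the right-hand side is the set of homotopy classes of morphisms of crossed complexes. Since $B\CM$ has no homotopy above dimension $2$ and $\CM$ has nothing in degrees $\geq 3$, a morphism $\Pi M^\ast\to\CM$ amounts to the following data: a morphism of crossed modules $(\psi,\varphi)$ from $\partial_2\co\pi_2(M^2,M^1)\to\pi_1(M^1)$ to $\CM$, subject to the single condition $\psi\circ\partial_3=1$ on $\pi_3(M,M^2)$. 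The proof therefore splits into three steps: morphisms, the taut-identity condition, and homotopies.

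\textbf{Step 1 (morphisms vs.\ condition (i)).} Transport along the isomorphism $\eta$ of Section~\ref{sect-Taut} to the free crossed module $\nu_D\co\mathcal F_D\to\mathcal F_{\Sigma,\up}$, whose free basis is $\{b_l\}_{l\in\low}$, $\up$. By freeness (Appendix~\ref{sect-free-crossed-modules-over-groupoids}), a morphism to $\CM$ is uniquely determined by two pieces of data. The first is a groupoid morphism $\mathcal F_{\Sigma,\up}\to H$, i.e.\ a map $\alpha\co\up\to H$ extended to $\tilde\alpha$. The second is a choice $\beta(l)=\psi(b_l)\in E$, which must satisfy $\CM(\beta(l))=\tilde\alpha(\omega_l)$; this is exactly condition (i). The resulting $\psi$ sends $\langle x,l\rangle$ to $\lact{\tilde\alpha(x)}{\beta(l)}$.

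\textbf{Step 2 (condition (ii) and independence of choices).} Since $\pi_3(M,M^2)$ is a free $\pi_1$-module on the $h_\sigma$ and $\psi$ is equivariant, the condition $\psi\partial_3=1$ is equivalent to $\psi(\langle\tau_\sigma\rangle)=1$ for every $\sigma$, which is condition (ii). Well-definedness then follows because $\psi$ factors through $\mathcal F_D$. Two taut identities for the same $\sigma$ differ by Peiffer commutators, which $\psi$ kills, and by the action of $\mathcal F_{\Sigma,\up}$, under which the image changes by $\lact{\tilde\alpha(y)}{(\cdot)}$ and so preserves triviality. Hence condition (ii) does not depend on the representative set.

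\textbf{Step 3 (gauge action vs.\ homotopies).} This is the main obstacle. A homotopy of crossed complex morphisms $\Pi M^\ast\to\CM$ (Appendix~\ref{sect-HVThm}) consists of two pieces of data: an element $a(c)\in H$ for each $0$-cell $c\in\pi_0(\Sigma\setminus\up)$, and a $\tilde\alpha$-derivation on $1$-cells with values in $E$, determined freely by $d(u)$ for $u\in\up$. The higher components vanish because $\CM$ is trivial in degrees $\geq3$. I would write the standard homotopy formulas $f_1'(u)=a(s u)\,\CM(h_1(u))\,f_1(u)\,a(t u)^{-1}$ and $f_2'(b_l)=\lact{a(c_l)}{(h_1(\partial b_l)f_2(b_l))}$, and check that they match the stated formulas for $\alpha'$ and $\beta'$ with $h_1=d_\alpha$. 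This requires care with the left/right conventions and the sign conventions for the source and target $c^u_\pm$. One must also check that composition of homotopies corresponds to the given semidirect-product law on $\tg_D$, so that this is a group action whose orbits are exactly the homotopy classes.

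Equivalently, I would verify directly that $(\alpha',\beta')$ satisfies (i) and (ii). Condition (i) follows from the derivation rule $\CM(d_\alpha(w))\tilde\alpha(w)=\tilde\alpha'(w)$ combined with the Peiffer identity. Condition (ii) follows because the transformed morphism is again a crossed-module morphism, and every element of $\eta(\mathrm{Im}\,\partial_3)$ has trivial $\nu_D$-image. The action axioms are a routine computation. Combining the three steps yields the canonical bijection $[M,B\CM]\cong\{\CM\text{-labelings of }D\}/\tg_D$.
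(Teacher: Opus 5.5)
Your proposal is correct and follows the paper's proof. Like the paper, you identify $[M,B\CM]$ with $[\Pi M^\ast,\widetilde{\CM}]$ via the homotopy classification theorem, read off conditions (i) and (ii) from the free basis $\{h_\sigma\},\{b_l\},\up$, match homotopies (which have only the components $H_0$ and $H_1$) with elements of $\tg_D$, and get independence of the taut identities from $h'_\sigma=\lact{w}{h}_\sigma$ and the equivariance of $f_2$. The only slip is in your optional direct check of (i): the identity you need is $\tilde\alpha'(w)=a(s(w))\,\CM(d_\alpha(w))\,\tilde\alpha(w)\,a(t(w))^{-1}$, which follows from the derivation rule and the equivariance of $\CM$; the Peiffer identity is used instead to show that the transformed map $\psi$ is multiplicative.
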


\begin{proof}
We use the notation of Sections~\ref{sect-CM-from-Heegaard} and~\ref{sect-Taut}. Recall from Section~\ref{sect-Heegaard-diag-3man} that the embedding of the surface underlying $D$ into $M$ extends to a diffeomorphism $M_D\cong M$. This diffeomorphism transports the CW-decomposition of $M_D$ defined in Section~\ref{sect-Taut} into a  CW-decomposition of $M$,  whose associated skeletal filtration is denoted by~$M^*$.  It follows from Section~\ref{sect-Taut} that the free crossed complex $\Pi M^\ast$ (see Appendix~\ref{exa-crossed-complexes-of-CW-complexes}) is canonically isomorphic to the free crossed complex
$$
\Pi_{M,D}=\bigl ( \cdots \to 1 \to 1 \to  \pi_3(M,M^2) \xrightarrow{\eta\partial_3}  \mathcal{F}_D \xrightarrow{\nu_D}   \mathcal{F}_{\Sigma,\up}  \bigr).
$$
Consider the free basis
$$
\mathcal{B}_*=\bigl\{\mathcal{B}_3=\{h_\sigma\}_{\sigma \in \pi_0(\Sigma \setminus \low)},\, \mathcal{B}_2=\{b_l\}_{l \in \low},\, \mathcal{B}_1=\up\bigr\}
$$
of $\Pi_{M,D}$, where $\{\mathcal{B}_2, \mathcal{B}_1\}$ is the free basis of $\nu_D$ defined in Section~\ref{sect-CM-from-Heegaard} and $\mathcal{B}_3$ is the basis of $\pi_3(M,M^2)$ defined in Section~\ref{sect-Taut}. Note that the crossed module $\CM$ induces the crossed complex 
$$
\widetilde{\CM}= ( \cdots \to 1 \to 1 \to  E \xrightarrow{\CM }  H).
$$
By the homotopy classification theorem (see Appendix~\ref{defn-of-pointed-crossed-things}),  $[M,B\CM]$ is in canonical bijection with $[\Pi M^\ast,\widetilde{\CM}]$ and so with $[\Pi_{M,D},\widetilde{\CM}]$. (Here, we denote by $[C,C']$ the set of homotopy classes of morphisms of crossed complexes from $C$ to $C'$, see Appendix~\ref{sect-groupoid-of-homotopies}.) 
Since~$\widetilde{\CM}$ has a single object, it follows from Appendix~\ref{sect-free-crossed-complexes-def} that a morphism $f=\{f_n\}_{n \geq 1}$ of crossed complexes $\Pi M^\ast \to \widetilde{\CM}$ is fully determined by maps $\alpha \co \up \to H$ and  $\beta \co \low \to E$, with $\alpha(u)=f_1(u)$ and  $\beta(l)=f_2(b_l)$, such that for all $l \in \low$ and $\sigma \in \pi_0(\Sigma \setminus \low)$,
$$
\CM(\beta(l))=f_1(\nu_D(b_l)) \quad \text{and} \quad  f_2(\eta\partial_3(h_\sigma))=1.
$$
Since $f_1=\tilde{\alpha}$, $\nu_D(b_l)=\omega_l$, and $\eta\partial_3(h_\sigma)=\langle\tau_\sigma\rangle$, these two conditions correspond to Axioms (i)-(ii) of a $\CM$-labeling of~$D$. Consequently, there is a canonical bijective correspondence  between morphisms $\Pi M^\ast \to \widetilde{\CM}$ of crossed complexes and $\CM$-labelings of $D$. Now a homotopy between two morphisms $\Pi M^\ast \to \widetilde{\CM}$ is fully determined by its values on $\mathcal{B}_*$ and so by an element of~$\tg_D$. Moreover, the groupoid structure of homotopies and its action on  morphisms of crossed complexes correspond to the group structure of $\tg_D$ and its action on $\CM$\ti labelings of~$D$ (which is then well-defined). Hence there is a canonical  bijection between the set $[M,B\CM]$ and the set of gauge equivalence classes of $\CM$-labelings of $D$.

It remains to prove that Axiom (ii) in the definition of a $\CM$-labeling of $D$ does not depend on the choice of the representative set of taut identities for $D$. Any different choice of basepoints for the 3-cells of $M$ induces another basis $\mathcal{B}'_3=\{h'_\sigma\}_{\sigma \in \pi_0(\Sigma \setminus \low)}$ of the free $\pi_1(M)$-module $\pi_3(M,M^2)$. Let  $\tau$ and $\tau'$ be representative sets of taut identities  for $D$ with respect to $\mathcal{B}_3$ and $\mathcal{B}'_3$, respectively. Given $\sigma \in \pi_0(\Sigma \setminus \low)$, we need to prove that $f_2(\langle\tau'_\sigma\rangle)=1$ if and only if  $f_2(\langle\tau_\sigma\rangle)=1$. 
By considering a path connecting the basepoints of the 3-cell corresponding to $\sigma$ and using that 
 $\pi_1(M)\cong \mathcal{F}_{\Sigma,\up}/\mathrm{Im}(\nu_D)$, there exists $w \in \mathcal{F}_{\Sigma,\up}$ such that $h'_\sigma=\lact{w}{h}_\sigma$.
Then 
$$
\langle\tau'_\sigma\rangle=\eta\partial_3(h'_\sigma)=\eta\partial_3(\lact{w}{h}_\sigma)
=\lact{w}{\ns\bigl(}\eta\partial_3(h_\sigma)\bigr)=\lact{w}{\ns\langle}\tau_\sigma\rangle
$$
and so 
$$
f_2(\langle\tau'_\sigma\rangle)=f_2(\lact{w}{\ns\langle}\tau_\sigma\rangle)=\lact{f_1(w)}{\ns f_2}(\langle\tau_\sigma\rangle).
$$
We conclude using that $H$ acts on $E$ by group automorphisms.
\end{proof}

\subsection{Particular cases}\label{sect-partic-cases-thm1}
1. Given a group $G$, the group homomorphism $1 \to G$ is a crossed module and  $B(1\to G)=BG$ is the classifying space of $G$. We recover from Theorem~\ref{thm-gauge-group-labelings} that for any closed connected oriented 3-manifold $M$, the set $[M,BG]$ of homotopy classes of maps $M \to BG$ is in bijection with the set of conjugacy classes of group homomorphisms $\pi_1(M) \to G$. 

2. Let $M$ be a closed connected oriented 3-manifold and $E$ be an abelian group. Recall that $E \to 1$ is a crossed module. Pick an oriented pointed Heegaard diagram $D=(\Sigma,\up,\low)$ of $M$ such that both $\Sigma \setminus\up$ and $\Sigma \setminus\low$ are connected. Then the set of~$(E \to 1)$-labelings of $D$ is in bijective correspondence with the set $\mathrm{Map}(\low,E)$. The gauge group of $D$ is~$\tg_D=\mathrm{Map}(\up,E)$ with pointwise multiplication.  It acts on $\mathrm{Map}(\low,E)$ by  $d \cdot \beta=\beta'$ with $\beta'(l)=\tilde{d}(\omega_l)\beta(l)$ for all $l \in \low$, where  $\tilde{d}\co \mathcal{F}_{\Sigma,\up} \to E$ is the  group homomorphism extending $d$. The fact that  $B(E \to 1)$ is a $K(E,2)$-space and Theorem~\ref{thm-gauge-group-labelings} imply that there are bijections  
$$
H^2(M,E) \cong [M,B(E \to 1)] \cong \mathrm{Map}(\low,E)/\mathrm{Map}(\up,E).
$$

\subsection{Relation with 2-bundles and \v{C}ech cohomology}
Let $M$ be a closed connected oriented 3-manifold and $\tg_\CM$ be the $2$-group associated with the crossed module $\CM$ (see Appendix~\ref{sect-2-groups}).  Pick an oriented pointed Heegaard diagram $D$ of~$M$.    Consider the bijections given in Appendix~\ref{sect-flat} between the set $\pp(M,\tg_\chi)$ of equivalence classes of principal $\tg_\chi$-bundles over~$M$, the \v{C}ech cohomology $\check{H}(M,\tg_\CM)$ with coefficients in $\tg_\CM$, and the set $[M,B\CM]$.  Composing it with that of Theorem~\ref{thm-gauge-group-labelings}  yields canonical bijections
$$
\pp(M,\tg_\chi) \cong \check{H}(M,\tg_{\CM}) \cong \mathfrak{L}_D/\tg_D,
$$
where $\mathfrak{L}_D$ is the set of $\CM$-labelings of $D$. 

\subsection{Example}\label{ex-Lens-labelings}
Consider the lens space $L(p,q)$ and its Heegaard diagram $D$ given in Section~\ref{ex-Lens}. A $\CM$-labeling of $D$  is encoded by a pair $(x,e) \in H \times E$ such that $\CM(e)=x^p$ and $e^{-1} \bigl(\lact{x^q}{\!e}\bigr)=1$. Since $\lact{x^p}{\!e}=\lact{\CM(e)}{e}=e$ and the integers $p$ and $q$ are coprime, the latter condition is equivalent to $\lact{x}{e}=e$. Then the set of  $\CM$-labelings of~$D$ is in bijection with the set
$$
\mathfrak{L}_D=\bigl\{ (x,e) \in H \times E\; \big | \; \CM(e)=x^p \;\, \text{and} \;\, \lact{x}{e}=e \bigr\}.
$$ 
The gauge group of $D$ is $\tg_D=H \ltimes E$ with product $(a,d) (a',d')=\bigl(a a', (\lact{a'^{-1}}{\!\!d})d'\bigr )$. It acts on $\mathfrak{L}_D$ by 
$$
(a,d) \cdot (x,e)= \Bigl(a \CM(d) x a^{-1}, \lact{a}{\!\Bigl(}d \bigl(\lact{x}{\!d}\bigr) \cdots \bigl(\lact{x^{(p-1)}}{\!\!d}\bigr) e\Bigr) \! \Bigr).
$$
By Theorem~\ref{thm-gauge-group-labelings}, the set of homotopy classes of maps  $[L(p,q), B\CM]$ is in bijection with the set of orbits $\mathfrak{L}_D/\tg_D$. Denote by $[x,e]\in \mathfrak{L}_D/\tg_D$ the class of $(x,e)\in \mathfrak{L}_D$ and by $g_{[x,e]} \in [L(p,q), B\CM]$ the corresponding homotopy class, so that
$$
[L(p,q), B\CM]=\bigl\{ g_{[x,e]} \, \big | \,  [x,e]\in \mathfrak{L}_D/\tg_D\bigr \}.
$$
Note that when $H$ is trivial, and so $E$ is abelian, it follows from Section~\ref{sect-partic-cases-thm1}  that
$$
H^2(L(p,q),E)\cong E/E^p
$$
where $E^p=\{e^p \, | \, e \in E\}$ acts on $E$ by left multiplication.

\subsection{Example}\label{ex-Lens-labelings-Z4Z-22Z}
Given a lens space $L(p,q)$, let us describe the computations of Section~\ref{ex-Lens-labelings} for the  following explicit crossed module. (We keep notation from that section.) Consider the (additive) group  $\Z/2\Z=\{0,1\}$ which acts on the (additive) group $\Z/4\Z=\{\bar{0},\bar{1},\bar{2},\bar{3}\}$ by $\lact{0}{\bar{n}}=\bar{n}$ and $\lact{1}{\bar{n}}=-\bar{n}$.  Then the trivial group homomorphism $\CM\co \Z/4\Z \to \Z/2\Z$, sending $\bar{n}$ to $0$, is a crossed module (see the third example of Section~\ref{sect-crossed-modules-ex}). Then
$$
\mathfrak{L}_D=\bigl\{ (x,\bar{n}) \in \Z/2\Z \times \Z/4\Z\; \big | \; px=0 \;\, \text{and} \;\, (-1)^x \bar{n}=\bar{n} \bigr\}
$$ 
and the group $\tg_D=\Z/2\Z \ltimes \Z/4\Z$ has product $(a,\bar{d}) (a',\bar{d'})=\bigl(a+a', \overline{(-1)^{a'} d+ d'}\, \bigr )$ and acts on $\mathfrak{L}_D$ by 
$$
(a,\bar{d}) \cdot (x,\bar{n})= \bigl(x , \overline{(-1)^{a}(\kappa_x d+n)}\, \bigr) \;\, \text{with} \;\, \kappa_0=p \;\, \text{and} \;\, \kappa_1=(1-(-1)^p)/2.
$$
It follows that
$$
\big |[L(p,q), B\CM] \big |=\big |\mathfrak{L}_D/\tg_D \big |=
\left\{\!\!\begin{array}{cl}
5 & \text{if $p\equiv 0 \, (\mathrm{mod} \, 4)$,} \\
4 & \text{if $p\equiv 2 \, (\mathrm{mod} \, 4)$,} \\
1 & \text{otherwise,} 
\end{array} \right.
$$
and
$$
[L(p,q), B\CM]=
\left\{\!\!\begin{array}{ll}
\bigl\{ g_{[0,\bar{0}]}, g_{[0,\bar{1}]}, g_{[0,\bar{2}]}, g_{[1,\bar{0}]}, g_{[1,\bar{2}]} \bigr \} & \text{if $p\equiv 0 \, (\mathrm{mod} \, 4)$,} \\
\bigl\{ g_{[0,\bar{0}]}, g_{[0,\bar{1}]}, g_{[1,\bar{0}]}, g_{[1,\bar{2}]} \bigr \} & \text{if $p\equiv 2 \, (\mathrm{mod} \, 4)$,} \\
\bigl\{ g_{[0,\bar{0}]} \bigr \} & \text{otherwise.} 
\end{array} \right.
$$

\subsection{Example}\label{ex-Poincare-labelings}
Consider the Poincar\'e homology sphere $\mathbb{P}$ and its Heegaard diagram~$D$ given in Section~\ref{ex-Poincare}. The set of  $\CM$-labelings of~$D$ is in bijection with the set $\mathfrak{L}_D$ of tuples $(x,y,e,f) \in H^2 \times E^2$ satisfying
$$
\CM(e)=x^4 y^{-1} x^{-1} y^{-1}, \;\; \CM(f)=x^{-1} y^{-1}x^{-1} y^2, \;\; \bigl(e^{-1}\bigr) \bigl(\lact{x^{-1}}{\!\!e}\bigr) \bigl(\lact{x^{-1}y^3}{\!\!f}\bigr)^{-1}\bigl(\lact{yx}{\!f}\bigr)=1.
$$
The gauge group of $D$ is $\tg_D=H \ltimes E^2$ with product 
$$
(a,d,l) (a',d',l')=\bigl(a a', (\lact{a'^{-1}}{\!\!d})d', (\lact{a'^{-1}}{\!l})l'\bigr ).
$$ 
It acts on $\mathfrak{L}_D$ by $(a,d,l)  \cdot (x,y,e,f)=\bigl(a \CM(d) x a^{-1}, a \CM(l) y a^{-1}, \lact{a}{(}\omega e), \lact{a}{(}\omega'f)\bigr)$ where
\begin{align*}
& \omega=  d\,  \bigl(\lact{x}{\!d}\bigr)  \bigl(\lact{x^2}{\!\!d}\bigr) \bigl(\lact{x^3}{\!\!d}\bigr) \bigl(\lact{x^4y^{-1}}{\!l}\bigr)^{-1} \bigl(\lact{x^4y^{-1}x^{-1}}{\!\!d}\bigr)^{-1} \bigl(\lact{x^4y^{-1}x^{-1}y^{-1}}{\!l}\bigr)^{-1}, \\
& \omega'=  \bigl(\lact{x^{-1}}{\!\!d}\bigr)^{-1}  \bigl(\lact{x^{-1}y^{-1}}{\!l}\bigr)^{-1} \bigl(\lact{x^{-1}y^{-1}x^{-1}}{\!\!d}\bigr)^{-1} \bigl(\lact{x^{-1}y^{-1}x^{-1}}{\!l}\bigr) \bigl(\lact{x^{-1}y^{-1}x^{-1}y}{l}\bigr).
\end{align*}
By Theorem~\ref{thm-gauge-group-labelings}, the set of homotopy classes of maps  $[\mathbb{P}, B\CM]$ is in bijection with the set of orbits $\mathfrak{L}_D/\tg_D$.

\subsection{Colored Heegaard diagrams}
A \emph{$\CM$-Heegaard diagram}  is a Heegaard diagram endowed with a $\CM$-labeling (in the sense of  Section~\ref{sect-Xi-labelings}). Any $\CM$-Heegaard diagram $(D,(\alpha,\beta))$ defines a $\CM$-manifold $(M_D,g_{\alpha,\beta})$, where $M_D$ is the 3-manifold associated with $D$ (see Section~\ref{sect-Heegaard-diag-3man})  and  $g_{\alpha,\beta} \in[M_D,B\CM]$ is the homotopy class induced by the $\CM$-labeling $(\alpha,\beta)$ of $D$ as in Theorem~\ref{thm-gauge-group-labelings}.

A \emph{$\CM$-Heegaard diagram of a $\CM$-manifold} $(M,g)$ is a $\CM$-Heegaard diagram $(D,(\alpha,\beta))$ such that $D$ is a Heegaard diagram of $M$ and the orientation preserving diffeomorphism $M_D \cong M$ (induced by the  embedding of the splitting surface into $M$) is an equivalence between $(M_D,g_{\alpha,\beta} )$ and $(M,g)$.  It follows from Theorem~\ref{thm-gauge-group-labelings} that every  $\CM$-manifold has a $\CM$-Heegaard diagram.

\subsection{Colored Heegaard moves}\label{sect-Xi-moves}
The Reidemeister-Singer theorem asserts that two Heegaard diagrams give rise to diffeomorphic oriented 3-manifolds if and only if they are related by a finite sequence of the following Heegaard moves (and their inverses):   diffeomorphism of the surface, isotopy of the diagram (two-point move), stabilization, and handle slide (sliding a circle past another). We extend this result to $\CM$-Heegaard diagrams and $\CM$-manifolds (see Theorem~\ref{thm-colored-Reidemeister} below). To this end, we first introduce a colored version of the Heegaard moves. Each of the moves below transforms a $\CM$-Heegaard diagram $(D=(\Sigma, \up, \low),(\alpha,\beta))$ into another $\CM$-Heegaard diagram  $(D',(\alpha',\beta'))$. 

\smallskip

\begin{mylist}{iii}
\item[i] \textbf{Orientation preserving diffeomorphisms of $\Sigma$.} Let $\psi \colon \Sigma \to \Sigma$ be an orientation preserving diffeomorphism. The resulting $\CM$-Heegaard diagram is $(D'=\psi(D), (\alpha',\beta'))$, where $\psi(D)=(\psi(\Sigma),\psi(\up),\psi(\low))$ and the labels are transferred via the diffeomorphism: for all $u \in \up$ and $l \in \low$,
        $$\alpha'(\psi(u))= \alpha(u) \quad \text{and} \quad \beta'(\psi(l))=\beta(l).$$ 
        
\item[ii] \textbf{Moving basepoints.} This move consists of moving the basepoint of a lower circle $l$ across an intersection point   $s$ by following its orientation:
$$
 \psfrag{v}[Bc][Bc]{\scalebox{.9}{$l$}}
 \psfrag{s}[Bl][Bl]{\scalebox{.9}{$s$}}
 \rsdraw{.49}{.9}{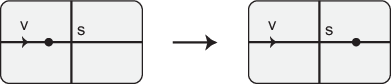} \;\;.
$$
The new label of~$l$~is
        $$
\beta'(l)=\lact{\alpha(u)^{-\nu(s)}}{\!\beta}(l)
        $$
    where $\nu_s \in \{1,-1\}$ is the sign of $s$ (see Section~\ref{sect-Heegaard-diag}). The labels of all other circles remain unchanged.
    
\item[iii] \textbf{Orientation reversal.} This move reverses the orientation of an upper or a lower circle and replaces its label by its inverse.
     
\item[iv] \textbf{Two-point move.} This move is an isotopy (in general position and away from the basepoints) between an upper circle $u \in \up$ and a lower circle $l \in \low$:
$$
 \psfrag{u}[Br][Br]{\scalebox{.9}{$u$}}
 \psfrag{v}[Br][Br]{\scalebox{.9}{$l$}}
 \rsdraw{.49}{.9}{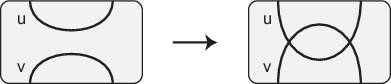} \;\;.
$$
Throughout this move, the labels of all circles remain unchanged.      Note that one of the added intersection points is positive and the other one is negative.

\item[v] \textbf{Stabilization.} 
This move removes a disk from $\Sigma$ which is disjoint from all upper and lower circles and replaces
  it by a punctured torus with one upper circle and one lower circle. One of them corresponds to the standard
  meridian and the other to the standard longitude of the added torus:
$$
 \psfrag{D}[Bc][Bc]{$D$}
 \rsdraw{.49}{.9}{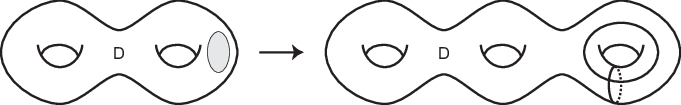} \;\;.
$$
The added lower circle is  endowed with arbitrary orientation and basepoint. The added upper circle is oriented in such a way that the added intersection point is positive. The added lower circle is labeled by an element $e \in E$ and the added upper circle  is labeled by $\CM(e) \in H$.
    
\item[vi] \textbf{Handle slide of upper circles.} This move slides an upper circle $u_1$ over another upper circle $u_2$. Pick a band~$b$ on $\Sigma$ which connects $u_1$ to $u_2$ and does not cross any other circle. We assume that the connected components~$c^{u_1}_-$ and~$c^{u_2}_-$ agree (see Section~\ref{sect-CM-from-Heegaard}) and that the band $b$ lies in this component. (This is always possible after orientation reversal of the circles.) The circle $u_1$ is replaced by the band sum $u_1' = u_1 \#_b u_2$ and inherits the orientation from~$u_1$. The circle $u_2$ is replaced by a copy $u'_2$ of itself (with the same orientation) which is slightly isotoped such that it has no point in common with~$u'_1$:
 $$
 \psfrag{C}[Bl][Bl]{\scalebox{.9}{$u_1$}}
 \psfrag{D}[Bl][Bl]{\scalebox{.9}{$u_2$}}
 \psfrag{U}[Bl][Bl]{\scalebox{.9}{$u'_1$}}
 \psfrag{V}[Br][Br]{\scalebox{.9}{$u'_2$}}
 \psfrag{b}[Bc][Bc]{$b$}
 \rsdraw{.49}{.9}{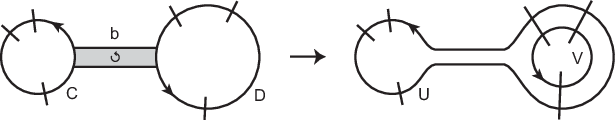}\;\; .
$$ 
Here the small arcs represent parts of the lower circles intersecting $u_1$ and~$u_2$ and the orientation of $b$ is inherited from $\Sigma$.      
The labels of new circles are
$$
\alpha'(u_1')=\alpha(u_1) \quad \text{and} \quad \alpha'(u_2')=\alpha(u_1)^{-1}\alpha(u_2).
$$
The labels of all other circles remain unchanged.

\item[vii] \textbf{Handle slide  of lower circles.} This move slides a lower circle $l_1$ over another lower circle $l_2$. Pick a band~$b$ on $\Sigma$ which connects $l_1$ to $l_2$  and does not cross any other circle. We assume that $b$ connects $l_1$ to $l_2$ just before their basepoints and that the orientations of $l_1$ and $l_2$ induce the same orientation on their band sum along~$b$. (This is always possible after moving basepoints and/or orientation reversal of the circles.) The circle $l_1$ is replaced by the band sum $l_1' = l_1 \#_b l_2$ and inherits the orientation and  basepoint from~$l_1$. The circle~$l_2$ is replaced by a copy $l'_2$ of itself (with the same orientation and basepoint) which is slightly isotoped such that it has no point in common with~$l'_1$:
 $$
 \psfrag{C}[Bl][Bl]{\scalebox{.9}{$l_1$}}
 \psfrag{D}[Bl][Bl]{\scalebox{.9}{$l_2$}}
 \psfrag{U}[Bl][Bl]{\scalebox{.9}{$l'_1$}}
 \psfrag{V}[Br][Br]{\scalebox{.9}{$l'_2$}}
 \psfrag{b}[Bc][Bc]{$b$}
 \rsdraw{.49}{.9}{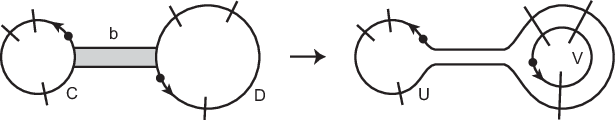}\;\; .
$$       
Here the small arcs represent parts of the upper circles intersecting $l_1$ and $l_2$. The labels of new circles are
$$
\beta'(l_1')=\beta(l_1)\beta(l_2) \quad \text{and} \quad \beta'(l_2')=\beta(l_2).
$$
The labels of all other circles remain unchanged.

\item[viii] \textbf{Adding a trivial circle.} By a trivial circle, we mean a circle that bounds a disk embedded in $\Sigma$ disjoint from all the upper and lower circles of $D$. This move adds a trivial oriented upper circle labeled by an arbitrary element of~$H$, or a trivial oriented pointed lower circle labeled by the unit $1 \in E$.\\[-.8em]
\end{mylist}
The moves (i)-(viii) and their obvious inverses (such as destabilization or removal of a trivial circle) are called \emph{$\CM$-moves}.

\begin{thm}\label{thm-colored-Reidemeister}
The $\CM$-moves are well-defined (that is, transform a $\CM$-Heegaard diagram into a $\CM$-Heegaard diagram). Moreover 
two $\CM$-Heegaard diagrams represent equivalent $\CM$-manifolds if and only if they are related by a finite sequence of $\CM$-moves.
\end{thm}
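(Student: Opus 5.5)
The plan is to work throughout with the crossed-complex description from the proof of Theorem~\ref{thm-gauge-group-labelings}. A $\CM$-labeling of $D$ is the same as a morphism of crossed complexes $f\co \Pi M^\ast \to \widetilde{\CM}$ for the CW-decomposition of $M_D$ with $2$-skeleton $X_D$. Its homotopy class corresponds to $g_{\alpha,\beta}\in[M_D,B\CM]$ via the homotopy classification theorem, and this correspondence is natural with respect to cellular maps. Each Heegaard move (i)--(viii) induces a diffeomorphism, canonical up to isotopy, $\phi\co M_D\to M_{D'}$ that is the identity away from the region where the move takes place. For each move I will compute the effect of a cellular approximation of $\phi$ (or a subdivision/collapse relating the two CW-structures) on the free bases $\mathcal{B}_*$. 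I will then check two things: that the new labels $(\alpha',\beta')$ satisfy Axioms (i)--(ii), and that the corresponding morphism $f'$ satisfies $f'\circ\Pi\phi\simeq f$.

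The key steps, move by move, are as follows.
\begin{itemize}
\item Move (i) is transport of structure.
\item Moves (ii) and (iii) change the basis elements only: $b_l$ is replaced by $\lact{u^{-\nu}}{b_l}$, respectively a generator is inverted, and $\omega_l$ is replaced by a conjugate, respectively by its inverse. The formulas follow immediately from the relations $\nu_D(\langle x,l\rangle)=x\omega_l x^{-1}$ and $f_2(\lact{w}{z})=\lact{f_1(w)}{f_2(z)}$.
\item Move (iv) inserts a cancelling pair $u^{+1}u^{-1}$ into $\omega_l$ and leaves the cell structure combinatorially equivalent, so the labels are unchanged.
\item Move (viii) adds a cancelling $0/1$-cell pair (trivial upper circle) or a cancelling $2/3$-cell pair (trivial lower circle). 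The label $1$ is forced by the new taut identity $(1,l)^{\pm1}$; a trivial upper circle imposes no condition on its label.
\item Move (v) adds a cancelling $1$-cell/$2$-cell pair with $\omega_{l}=u$. Axiom (i) forces $\alpha(u)=\CM(e)$, and all such labelings are gauge equivalent, by gauging with $d(u)=e^{-1}$.
\item Moves (vi) and (vii) are handle slides, i.e.\ Tietze-type changes of free basis. For (vii), $b_{l_1'}=b_{l_1}b_{l_2}$ in $\mathcal{F}_D$, using that the band is attached just before the basepoints. For (vi), the generators change by $u_2\mapsto u_1u_2'$. The taut identities are transformed accordingly.
\end{itemize}
Each of these checks shows that $(\alpha',\beta')$ is a labeling and that $g_{\alpha',\beta'}\circ\phi=g_{\alpha,\beta}$. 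This proves that the moves are well defined and proves the ``if'' direction.

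For the ``only if'' direction, let $(D,(\alpha,\beta))$ and $(D',(\alpha',\beta'))$ represent equivalent $\CM$-manifolds via $\psi$. By the Reidemeister--Singer theorem there is a sequence of uncolored Heegaard moves from $D$ to $D'$. I will first need to convert these into moves of the restricted forms used here: bands placed in the prescribed components, basepoint and orientation conventions, and, for the generalized diagrams, stabilization together with trivial circles. I then propagate the labeling of $D$ along the sequence, using at each stabilization an arbitrary $e$ (say $e=1$). This produces a labeling $(\alpha'',\beta'')$ on $D'$ that is related to $(\alpha,\beta)$ by $\CM$-moves. Because the composite diffeomorphism is isotopic to $\psi$ (this can be arranged, possibly after adding a final move (i)), the labeling $(\alpha'',\beta'')$ represents the same class in $[M_{D'},B\CM]$ as $(\alpha',\beta')$. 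By Theorem~\ref{thm-gauge-group-labelings}, the two labelings are therefore gauge equivalent.

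The main remaining step, and I expect it to be the main obstacle, is to show that gauge equivalence is realized by $\CM$-moves. Since the gauge group is generated by elements $(a,1)$ with $a$ supported at a single vertex $c$ and by elements $(1,d)$ with $d$ supported at a single upper circle $u$, it suffices to treat these generators. For $d$ at $u$, I plan to stabilize next to $u$ with the label $e=d(u)$ and then handle-slide $u$ over the new upper circle. This multiplies $\alpha(u)$ by $\CM(d(u))$, and a compensating lower handle slide over the new lower circle adjusts the $\beta$'s. For $a$ at $c$, I plan to add a trivial upper circle inside $c$ labeled $a$, slide it successively over all upper circles bounding $c$, and then remove it. Verifying that the resulting $\beta$'s match the formula $\lact{a(c_l)}{\bigl(d_\alpha(\omega_l)\beta(l)\bigr)}$, including the basepoint conventions, is the delicate bookkeeping; I would check it first on the lens-space diagram of Example~\ref{ex-Lens-labelings}.
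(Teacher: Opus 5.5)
Your architecture matches the paper's proof. You identify labelings with morphisms $\Pi M^\ast\to\widetilde{\CM}$ and check each move on free bases; your formulas $b_l\mapsto\lact{u^{-\nu}}{b_l}$, $u_2\mapsto u_1'u_2'$ and $b_{l_1'}=b_{l_1}b_{l_2}$ agree with the paper's $\theta_T$. You use naturality of the homotopy classification theorem for the ``if'' direction. For the converse you combine Reidemeister--Singer, Theorem~\ref{thm-gauge-group-labelings}, and realization of the generators $(a_{c,x},1)$ and $(1,d_{u,e})$ of $\tg_D$ by $\CM$-moves. Your recipe for $(a_{c,x},1)$ is exactly the paper's. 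It works because under move~(vi) it is the circle slid \emph{over} whose label changes: sliding $u_1$ over $u_2$ keeps $\alpha(u_1)$ and replaces $\alpha(u_2)$ by $\alpha(u_1)^{-1}\alpha(u_2)$.

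The step that fails as written is the generator $(1,d_{u,e})$.

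By that same rule, sliding $u$ over the new circle $u_s$ leaves $\alpha(u)$ unchanged; it is $u_s$ that gets relabeled, by $\alpha(u)^{-1}\CM(e)$. So your slide does not multiply $\alpha(u)$ by $\CM(e)$. Your own formula $u_2\mapsto u_1'u_2'$ already says this.

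Moreover, your sequence never destabilizes, so it does not return to $D$. It also could not: after your slide, $l_s$ meets both $u\#_b u_s$ and the copy of $u_s$, while $u\#_b u_s$ meets every lower circle that $u$ met. So no pair is available for destabilization.

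The paper's sequence reverses the roles:
\begin{enumerate}
\item Stabilize in $c^u_-$, with $l_s$ labeled $e$ and $u_s$ labeled $\CM(e)$.
\item Following the order of the intersection points along $u$, slide over $l_s$ the lower circle through each such point. This is one slide per point, not a single compensating slide.
\item Slide $u_s$ over $u$. This replaces $\alpha(u)$ by $\alpha(u_s)^{-1}\alpha(u)$, i.e.\ by $\CM(e)^{\pm1}\alpha(u)$ depending on the orientation given to $u_s$.
\item Destabilize the pair $(u_s',l_s)$ and isotope back to $D$.
\end{enumerate}

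The preliminary lower slides do double duty. First, they produce the factor $d_\alpha(\omega_l)$ in $\beta'(l)$, namely one conjugate of $e^{\pm1}$ for each occurrence of $u^{\pm1}$ in $\omega_l$. Second, they make the final destabilization possible: without them, $u_s\#_b u$ runs parallel to $u$ and meets every lower circle that $u$ meets.
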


We prove Theorem~\ref{thm-colored-Reidemeister} in the next section.  In view of Section~\ref{sect-flat-2-bundles}, Theorem~\ref{thm-colored-Reidemeister} gives a way to represent $\CM$-bundles via $\CM$-Heegaard diagrams.

\subsection{Remark}\label{sect-rem-connected}
A  Heegaard  diagram $D=(\Sigma,\up,\low)$ is \emph{connected} if both $\Sigma \setminus\up$ and~$\Sigma \setminus\low$ are connected. 
Contrary to the $\CM$-move (viii), the $\CM$-moves (i)-(vii) and their inverses transform a connected $\CM$-Heegaard diagram into a connected $\CM$-Heegaard diagram. They are called \emph{connected $\CM$-moves}. Any $\CM$-manifold has a connected $\CM$-Heegaard diagram. By Theorem~\ref{thm-colored-Reidemeister}, if two connected $\CM$-Heegaard diagrams are related by a finite sequence of connected $\CM$\ti moves, then they represent equivalent $\CM$-manifolds. But the converse is false: connected $\CM$\ti Heegaard diagrams of equivalent $\CM$-manifolds are not necessarily related by connected $\CM$\ti moves.

\subsection{Proof of Theorem~\ref*{thm-colored-Reidemeister}}\label{sect-proof-colored-Reidemeister}
First, let us prove that $\CM$-moves are well-defined. We have to verify that the resulting pair $(\alpha',\beta')$ of each $\CM$-move is a $\CM$-labeling. This is obvious for the $\CM$-moves~(i), (iv), and (viii) for upper circles since under these moves, the elements $\omega_l\in \mathcal{F}_{\Sigma,\up}$ with $l \in \low$ and (an appropriate choice of) taut identities remain unchanged. Adding a trivial lower circle $l$ adds the element $\omega_l=1$, adds a taut identity~$(1,l)^{\pm 1}$, and inserts $(r,l)^{\pm 1}$ in the taut identity associated with the component of $\Sigma \setminus \low$ where the circle is added. These transformations are innocuous since the label of~$l$ is $1 \in E$. Hence the $\CM$-move (viii) for lower circles transforms a $\CM$-labeling into a $\CM$-labeling.

The $\CM$-move (ii) consists of moving the basepoint of a lower circle $l$ across an intersection point $s$ (lying on an upper circle $u$) by following its orientation. Denote by $\nu$ the sign of $s$ (that is, $\nu=1$ if $s$ is positive and $\nu=-1$ otherwise). The element $\omega_l=u^{\nu} w$ is transformed into  $\omega'_l=w\, u^{\nu}$ and a taut identity $\prod_{k=1}^n (r_k,l_k)^{\varepsilon_k}$ is transformed into $\prod_{k=1}^n (r_k',l_k)^{\varepsilon_k}$, where $r'_k=r_ku^\nu$ if $l_k=l$ and $r'_k=r_k$ otherwise.  The fact that 
$$
\beta'(l)=\lact{\alpha(u)^{-\nu}}{\!\beta}(l) \quad \text{and} \quad
 \lact{\tilde{\alpha}'(r'_k)}{\beta'}(l_k)= \lact{\tilde{\alpha}(r_k)}{\beta}(l_k)
$$
for all $k$ imply that $(\alpha',\beta')$ is a $\CM$-labeling.

Reversing the orientation of an upper circle $u$ transforms each $\omega_l$ with $l \in \low$ and each taut identity $\tau=\prod_{k=1}^n (r_k,l_k)^{\varepsilon_k}$ by replacing $u$ with $u^{-1}$ in $\omega_l$ and~$r_k$. Likewise, reversing the orientation of a lower circle $l$ transforms $\omega_l$ into $\omega_l^{-1}$, leaves the elements $\{\omega_{l'}\}_{l' \in \low\setminus\{l\}}$ unchanged, and transforms each taut identity $\tau=\prod_{k=1}^n (r_k,l_k)^{\varepsilon_k}$ by replacing $\varepsilon_k$ with $\varepsilon'_k$,  where $\varepsilon'_k=-\varepsilon_k$ if $l_k=l$  and $\varepsilon'_k=\varepsilon_k$ otherwise. These transformations are compensated by the changes of labels. Hence the $\CM$-move (iii) transforms a $\CM$-labeling into a $\CM$-labeling.

Stabilization introduces a lower circle  $l$ and an upper circle $u$. Then $\omega_l=u$ and this move inserts $(r,l)(r,l)^{-1}$ in the taut identity associated with the component of~$\Sigma \setminus \low$ where the stabilization is performed. Since the label of $u$ is the image under~$\CM$ of the label of $l$, we conclude that the $\CM$-move (v)  transforms a $\CM$-labeling into a $\CM$-labeling.

By Section~\ref{sect-Taut}, any oriented pointed Heegaard diagram $D=(\Sigma, \up, \low)$ gives rise to a free crossed complex 
$$
\Pi_{D}=\bigl ( \cdots \to 1 \to 1 \to  \pi_3(M_D,X_D) \xrightarrow{\eta\partial_3}  \mathcal{F}_D \xrightarrow{\nu_D}   \mathcal{F}_{\Sigma,\up}  \bigr)
$$
with free basis 
$\mathcal{B}_*=\{\mathcal{B}_3,\mathcal{B}_2,\mathcal{B}_1\}$, where $\mathcal{B}_2=\{b_l\}_{l \in \low}$ and $\mathcal{B}_1=\up$ (see Section~\ref{sect-CM-from-Heegaard}). 
Recall from the proof of Theorem~\ref{thm-gauge-group-labelings} that there is a bijective correspondence  between morphisms of crossed complexes $\Pi_D \to \widetilde{\CM}= ( \cdots \to 1 \to 1 \to  E \xrightarrow{\CM }  H)$ and $\CM$-labelings of~$D$. In this correspondence,  the $\CM$-labeling $(\alpha,\beta)$ associated with a morphism $f \co \Pi_D \to \widetilde{\CM}$ is given by $\alpha(u)=f_1(u)$ for $u \in \up$ and  $\beta(l)=f_2(b_l)$  for $l \in\low$. Consider now a handle slide from $D$ to $D'$. It induces  an orientation preserving diffeomorphism $M_D \to M_{D'}$ which preserves their filtration defined in Section~\ref{sect-Taut}. This diffeomorphism then induces an   isomorphism between the crossed complexes $\theta \co \Pi_{D} \to \Pi_{D'}$ and so between the sets of $\CM$-labelings of $D$ and $D'$. The image under this bijection of a $\CM$-labeling $(\alpha,\beta)$ of $D$ is the pair $(\alpha',\beta')$ described in the corresponding handle slide $\CM$-move, which is then a $\CM$-labeling of $D'$.
For the $\CM$-move (vi), this follows from the fact $\theta_2(b_l)=b_l$  for all  $l \in \low$ and 
$$
\theta_1(u_1)=u'_1, \quad \theta_1(u_2)=u'_1 u'_2, \quad \theta_1(u)=u \, \text{ for all $u \in \up \setminus \{u_1,u_2\}$.}
$$
For the $\CM$-move (vii), this follows from the fact that $\theta_1(u)=u$  for all  $u \in \up$ and 
$$
\theta_2(b_{l_1})=b_{l'_1}b_{l'_2}^{-1}, \quad \theta_2(b_{l_2})=b_{l'_2}, \quad \theta_2(b_l)=b_l \, \text{ for all $l \in \low \setminus \{l_1,l_2\}$.}
$$
This proves that the $\CM$-moves (vi) and (vii)  transform a $\CM$-labeling into a $\CM$-labeling.

Second, let us prove that two $\CM$-Heegaard diagrams related by a $\CM$-move represent equivalent $\CM$-manifolds. Given an oriented pointed Heegaard diagram $D$, we denote by  $g_{\alpha,\beta} \in [M_D,B\CM]$ the homotopy class of maps associated with the gauge class of a $\CM$-labeling $(\alpha,\beta)$ of $D$  under the canonical bijection of Theorem~\ref{thm-gauge-group-labelings}.
Let us prove that if a  $\CM$-Heegaard diagram $(D',(\alpha',\beta'))$ is obtained from a $\CM$-Heegaard diagram $(D,(\alpha,\beta))$ by applying a $\CM$-move $T$, then $g_{\alpha',\beta'}\circ  \psi_T=g_{\alpha,\beta}$, where $\psi_T \co M_D \to M_{D'}$ is the diffeomorphism canonically associated (up to isotopy) with the Heegaard move underlying $T$.  This diffeomorphism $\psi_T$ is homotopic to a cellular map which induces a morphism of crossed complexes $\theta_T \co \Pi_{D} \to \Pi_{D'}$ such that the following diagram commutes:
$$
\xymatrix@R=.5cm @C=1.2cm{
[\Pi_{D'}, \widetilde{\CM}] \ar@{->}[r]^-{? \circ\theta_T}\ar@{=}[d]^-{\mathbin{\rotatebox[origin=c]{-90}{$\sim$}}}      & [\Pi_{D}, \widetilde{\CM}] \ar@{=}[d]^-{\mathbin{\rotatebox[origin=c]{-90}{$\sim$}}}     \\
[M_{D'},B\CM] \ar@{->}[r]^-{? \circ\psi_T}  &  [M_{D},B\CM].
}
$$
Here, the square brackets in the first row denote the set of homotopy classes of morphisms of crossed complexes (see Appendix~\ref{sect-groupoid-of-homotopies}) and the vertical bijections are given by the homotopy classification theorem (see Appendix~\ref{defn-of-pointed-crossed-things}). 
As above, consider the morphisms  of crossed complexes $f_{\alpha,\beta} \co \Pi_D \to \widetilde{\CM}$ and  $f_{\alpha',\beta'} \co \Pi_{D'} \to \widetilde{\CM}$ associated with the $\CM$-labelings $(\alpha,\beta)$ and $(\alpha',\beta')$. These morphisms are mapped to $g_{\alpha,\beta}$ and $g_{\alpha',\beta'}$ under the vertical bijections. Now it follows from the definition of the change of labelings for the $\CM$-move $T$ (see Section~\ref{sect-Xi-moves}) that these morphisms are related by $[f_{\alpha',\beta'}\circ  \theta_T]=[f_{\alpha,\beta}]$. By the commutativity of the diagram, we deduce that $g_{\alpha',\beta'}\circ  \psi_T=g_{\alpha,\beta}$.

Third, let us prove that $\CM$-Heegaard diagrams representing equivalent $\CM$-manifolds are related by a finite sequence of $\CM$-moves. Let $(D,(\alpha,\beta))$ and $(D',(\alpha',\beta'))$ be two $\CM$-Heegaard diagrams whose associated $\CM$-manifolds $(M_D, g_{\alpha,\beta})$ and $(M_{D'},g_{\alpha',\beta'})$ are equivalent. By definition, there is an orientation preserving diffeomorphism $\psi \colon M_{D'} \to M_{D}$ such that $g_{\alpha',\beta'}= g_{\alpha,\beta} \circ \psi $. By the Reidemeister-Singer theorem, the  diffeomorphism $\psi$ is isotopic to a composition $\psi_{T_n} \circ \dots \circ \psi_{T_1}$ where 
$$
D'=D_0 \xrightarrow{T_1} D_1 \to \dots \xrightarrow{T_n} D_n=D
$$
is a sequence of (standard) Heegaard moves. Extend each $T_i$ to a $\CM$-move to obtain a sequence of  $\CM$-moves
\begin{gather*}
(D', (\alpha',\beta'))=(D_0,(\alpha_0,\beta_0)) \xrightarrow{T_1} (D_1,(\alpha_1,\beta_1)) \xrightarrow{T_2} \cdots \\ \cdots \xrightarrow{T_{n-1}} (D_{n-1},(\alpha_{n-1},\beta_{n-1})) \xrightarrow{T_n} (D_n,(\alpha_n,\beta_n))= (D,(\alpha'',\beta'')).
\end{gather*}
It remains to prove that $(D,(\alpha'',\beta''))$ and $(D,(\alpha,\beta))$ are also related by finitely many $\CM$-moves. 
By the  above, $g_{\alpha_{i-1},\beta_{i-1}}=g_{\alpha_i,\beta_i} \circ \psi_{T_i}$ for all $1 \leq i \leq n$. Then
$$
g_{\alpha,\beta} =g_{\alpha',\beta'}\circ \psi^{-1}=g_{\alpha'',\beta''} \circ \psi_{T_n} \circ \dots \circ \psi_{T_1} \circ \psi^{-1}=g_{\alpha'',\beta''}.
$$
Consequently, by Theorem~\ref{thm-gauge-group-labelings}, the $\CM$-labelings $(\alpha,\beta)$ and $(\alpha'',\beta'')$ of $D$ are gauge equivalent: there is $h \in \tg_D$ such that $(\alpha'',\beta'')=h\cdot(\alpha,\beta)$. We need to prove that the  $\CM$-Heegaard diagram $(D,h\cdot(\alpha,\beta))$ is obtained by applying to $(D,(\alpha,\beta))$ a finite sequence of $\CM$-moves. It suffices to verify this for the elements of the following generating set of the gauge group $\tg_D$:
$$
\bigl\{ (a_{c,x},1) \, \big| \, c \in \pi_0(\Sigma \setminus \up) \text{ and } x \in H \bigr\} \cup \bigl\{ (1,d_{u,e}) \, \big| \, u \in \up  \text{ and } e\in E \bigr\}, 
$$
where the map $a_{c,x}\co \pi_0(\Sigma \setminus \up) \to H$ assigns $x$ to $c$ and $1$ to the other components, and the map $d_{u,e} \colon \up \to E$ assigns $e$ to $u$ and $1$ to the other upper circles.\\

\pfpart{The case of $(a_{c,x},1)$} First add a trivial upper circle $u_0$ labeled by $x$ inside the component $c$. For each component of the boundary of $c$, slide $u_0$ over the upper circle corresponding to this component. Lastly isotop $u_0$ back to its initial position and remove it. The handle slide rule for upper circles ensures that the resulting $\CM$-Heegaard diagram is $(D,(a_{c,x},1) \cdot (\alpha,\beta))$. \\  

\pfpart{The case of $(1,d_{u,e})$} Apply the stabilization $\CM$-move in  $c^u_-$  by introducing a new lower circle $l_s$ labeled by $e$ and a new upper circle $u_s$ labeled by $\CM(e)$. By picking an arbitrary basepoint for $u$, the set of intersection points between $u$ and the lower circles is totally ordered by enumerating its elements  starting from this basepoint and following the orientation of $u$. Following this order, slide over $l_s$ the lower circles corresponding to the intersection points lying in $u$. Next  slide $u_s$ over $u$ and call the resulting circle $u_s'$. Finally, apply the destabilization $\CM$-move to remove the pair  $(u_s',l_s)$ and isotop the resulting $\CM$-Heegaard diagram to obtain $(D,(1,d_{u,e}) \cdot(\alpha,\beta))$.

\section{Hopf crossed module-coalgebras}\label{sect-Hopf-crossed-module-coalgebras}
Throughout this section, we let $H$ be a group (with neutral element $1$) and $\chi \co E \to H$ be a crossed module. We first review the definitions and basic properties of Hopf $H$-coalgebras and Hopf $\CM$-coalgebras (referring to \cite{Vi2} and \cite{SV2} for details). Then we study integrals of  involutory Hopf $\chi$-coalgebras of finite type.

\subsection{Hopf group-coalgebras}\label{sect-Hopf-group-coalgebras} 
A  \emph{Hopf $H$-coalgebra} (over the field $\Bbbk$) is a family of $\kk$-algebras $A = \{ A_x \}_{x \in H}$ endowed with a family of algebra homomorphisms  $\Delta=\{\Delta_{x,y} \colon A_{xy} \to A_x \otimes A_y\}_{x,y \in H}$ (called the \emph{coproduct}), an algebra homomorphism $\varepsilon \colon A_1 \to \kk$ (called the \emph{counit}),
and  a family of $\kk$-linear isomorphisms $S=\{S_x \colon A_{x^{-1}} \to A_x\}_{x \in H}$ (called the \emph{antipode}), which satisfy:
\begin{itemize}
    \item The coproduct is coassociative and counitary: for all $x,y,z \in H$, 
\begin{gather*}
(\Delta_{x,y}\otimes \id_{A_z}) \Delta_{xy,z}=(\id_{A_x} \otimes \Delta_{y,z}) \Delta_{x,yz},\\
(\id_{A_x} \otimes \varepsilon) \Delta_{x,1}=\id_{A_x}=(\varepsilon \otimes \id_{A_x}) \Delta_{1,x}.
\end{gather*}
    \item The antipode is convolution-inverse to the identities: for all $x \in H$,
    $$
\mu_x (S_x \otimes \id_{A_x}) \Delta_{x^{-1},x} =   \eta_x \varepsilon  =  \mu_x (\id_{A_x} \otimes S_x) \Delta_{x,x^{-1}},
$$
where $\mu_x\co A_x \otimes A_x \to A_x$ is the product of $A_x$ and $\eta_x \colon \Bbbk \to A_x$ is its unit map, that is, the $\kk$-linear morphism sending $1_\kk$ to the unit element $1_x$ of~$A_x$.
\end{itemize}
These axioms imply that the antipode is anti-multiplicative: for all $x \in H$,
$$
S_x \mu_{x^{-1}}=\mu_x\sigma_{A_x,A_x}(S_x \otimes S_x) \quad \text{and} \quad S_x(1_{x^{-1}})=1_x,
$$
and anti-comultiplicative:  for all $x,y \in H$,
$$
\Delta_{x,y} S_{xy}=(S_x \otimes S_y)\sigma_{A_{y^{-1}},A_{x^{-1}}} \Delta_{y^{-1},x^{-1}} \quad \text{and} \quad  \varepsilon S_1=\varepsilon.
$$
Here and below,  for $\kk$-vector spaces $U$ and $V$, the flip $\sigma_{U,V}\co U\otimes V \to V \otimes U$ is the $\kk$-linear isomorphism defined by $\sigma_{U,V}(u \otimes v)=v \otimes u$ for all $u \in U$ and $v \in V$.

Note that $A_1$ is a (usual) Hopf algebra (over $\kk$) with coproduct $\Delta_{1,1}$, counit~$\varepsilon$, and antipode $S_1$. Also, if the group  $H$ is finite, then the direct sum $\oplus_{x \in H} A_x$ is a Hopf algebra with coproduct and antipode given on $A_x$ by $\sum_{x \in H} \Delta_{h,h^{-1}x}$ and~$S_{x^{-1}}$.

\subsection{Graphical conventions}\label{sect-graphical-conventions}
We represent $\kk$-linear morphisms by diagrams to be read from bottom to top (with the composition/tensor product of morphisms consisting 
in vertical/horizontal stacking of diagrams). The product $\mu_x$, unit map~$\eta_x$,
coproduct $\Delta_{x,y}$,  counit $\varepsilon$,  and antipode $S_x$ of a Hopf $H$-coalgebra $A=\{A_x\}_{x \in H}$ are depicted as follows:
$$
\psfrag{a}[Br][Br]{\scalebox{.8}{$x$}}
\psfrag{x}[Bl][Bl]{\scalebox{.8}{$x$}}
\psfrag{y}[Br][Br]{\scalebox{.8}{$y$}}
\psfrag{n}[Bl][Bl]{\scalebox{.8}{$y$}}
\psfrag{z}[Bl][Bl]{\scalebox{.8}{$xy$}}
\psfrag{u}[Bl][Bl]{\scalebox{.8}{$1$}}
\psfrag{c}[Bl][Bl]{\scalebox{.8}{$x^{-1}$}}
\mu_x=\!\rsdraw{.45}{.9}{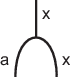} \qquad
\eta_x=\,\rsdraw{.45}{.9}{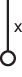} \qquad
\Delta_{x,y}=\!\rsdraw{.45}{.9}{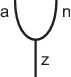}\qquad
\varepsilon=\,\rsdraw{.45}{.9}{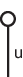} \qquad
S_x=\;\,\rsdraw{.45}{.9}{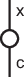}\;.
$$
Here the strand colors $x,y \in H$ are abbreviations for $A_x$ and $A_y$. The flip $\sigma_{A_x,A_y} $ is depicted as
$$
\psfrag{Z}[Bl][Bl]{\scalebox{.8}{$x$}}
\psfrag{X}[Br][Br]{\scalebox{.8}{$x$}}
\psfrag{A}[Bl][Bl]{\scalebox{.8}{$y$}}
\psfrag{Y}[Br][Br]{\scalebox{.8}{$y$}}
\sigma_{A_x,A_y}=\,\; \rsdraw{.45}{.9}{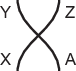} \,.
$$
For any $n \in \N$ and $x \in H$, define recursively the $n$-fold product 
$\mu^n_x \co A_x^{\otimes n} \to A_x$ by setting:
$$
\mu^0_x=\eta_x, \qquad \mu^1_x=\id_{A_x},  \qquad \mu^{n+1}_x=\mu_x(\mu^n_x \otimes \id_{A_x}).
$$
The associativity and unitality of the product imply that
$$
\mu^{n_1+\cdots +n_k}_x=\mu^k_x(\mu^{n_1}_x \otimes \cdots \otimes \mu^{n_k}_x).
$$
Similarly, we set $\Delta^0_1=\varepsilon\co A_1 \to \kk$ and, for $n \in \N_{\geq 1}$ and $x_1, \dots, x_n \in H$, we define recursively the $n$-fold coproduct 
$\Delta^n_{x_1, \dots, x_n} \co A_{x_1 \cdots x_n} \to A_{x_1} \otimes \cdots \otimes A_{x_n}$ by setting:
$$
\Delta^1_x=\id_{A_x} \quad \text{and} \quad \Delta^{n+1}_{x_1,x_2, \dots, x_{n+1}}=(\Delta^n_{x_1, \dots, x_n} \otimes\id_{A_{x_{n+1}}}) \Delta_{x_1\cdots  x_n, x_{n+1}}.
$$
The coassociativity and counitality of the coproduct imply that 
$$
\Delta^{n_1+\cdots +n_k}_{\bar{x}_1, \dots, \bar{x}_k}=(\Delta^{n_1}_{\bar{x}_1} \otimes \cdots \otimes \Delta^{n_k}_{\bar{x}_k})\Delta^k_{|\bar{x}_1|, \dots ,|\bar{x}_k|},
$$
where each $\bar{x}_i$ is an $n_i$-tuple of elements of $H$ and $|\bar{x}_i|$ denotes the product of the elements of $\bar{x}_i$.
We will depict the morphisms $\mu^n_x$ and $\Delta^n_{x_1, \dots, x_n}$ as
$$
\psfrag{a}[Bl][Bl]{\scalebox{.8}{$x_1$}}
\psfrag{c}[Bl][Bl]{\scalebox{.8}{$x_2$}}
\psfrag{n}[Bl][Bl]{\scalebox{.8}{$x_n$}}
\psfrag{v}[Bl][Bl]{\scalebox{.8}{$x_1 \cdots x_n$}}
\psfrag{x}[Bl][Bl]{\scalebox{.8}{$x$}}
\mu^n_x=\rsdraw{.4}{.9}{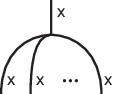}
\qquad \text{and} \qquad
\Delta^n_{x_1, \dots, x_n}=\rsdraw{.4}{.9}{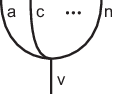} \,\;.
$$

\subsection{Crossed module-actions}\label{sect-crossed-module-actions}
Let $\chi \co E \to H$ be a crossed module. A \emph{$\chi$-action} on a Hopf $H$-coalgebra $A = \{ A_x \}_{x \in H}$ is a family of \kt algebra homomorphisms 
$$
\phi=\{\phi_{x,e} \co A_x \to A_{\CM(e)x} \}_{(x,e) \in H \times E}
$$
such that for all $x,y \in H$ and $e,f \in E$,
\begin{itemize}
\item $\phi_{x,1}=\id_{A_x}$, 
\item $\phi_{\CM(e)x,f}\, \phi_{x,e}=\phi_{x,fe}$,  
\item $(\phi_{x,e} \otimes \phi_{y,f}) \Delta_{x,y}=\Delta_{\CM(e)x,\CM(f)y}\, \phi_{xy,e\lact{x}{\!f}}$.
\end{itemize}
Note that the indices in Axiom~(iii) are coherent since the equivariance of $\CM$ (see Section~\ref{sect-crossed-modules-def}) implies that
$\CM\bigl(e\lact{x}{\!f}\bigr)xy=\CM(e)x\CM(f)y$. Also, each $\phi_{x,e}$ is an isomorphism, with inverse
$$
\phi_{x,e}^{-1}=\phi_{\CM(e)x, e^{-1}},
$$
and commutes with the antipode $S$ of $A$ in the following sense:  
$$
\phi_{x,e} \, S_x = S_{\CM(e)x} \, \phi_{x^{-1},\lact{{x^{-1}}}{\!(}e^{-1})}.
$$

We depict the $\CM$-action $\phi_{x,e} \co A_x \to A_{\CM(e)x}$ by a strand with a dot labeled with $e$ (on the left or on the right) as follows:
$$
\psfrag{x}[Bl][Bl]{\scalebox{.8}{$x$}}
\psfrag{y}[Bl][Bl]{\scalebox{.8}{$\CM(e)x$}}
\psfrag{e}[Br][Br]{\scalebox{.9}{$e$}}
\phi_{x,e}= \, \rsdraw{.45}{.9}{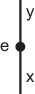} \quad \text{or} \quad
\psfrag{e}[Bl][Bl]{\scalebox{.9}{$e$}}
 \phi_{x,e}= \,\rsdraw{.45}{.9}{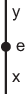} \;.
$$
The three axioms of the $\chi$-action are  depicted as
$$
\psfrag{x}[Bl][Bl]{\scalebox{.8}{$x$}}
\psfrag{e}[Br][Br]{\scalebox{.9}{$1$}}
\rsdraw{.45}{.9}{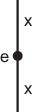}= \,
\psfrag{x}[Bl][Bl]{\scalebox{.8}{$x$}}
\rsdraw{.45}{.9}{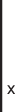} \;, \quad \quad \qquad
\psfrag{x}[Br][Br]{\scalebox{.8}{$x$}}
\psfrag{y}[Br][Br]{\scalebox{.8}{$\CM(e)x$}}
\psfrag{z}[Br][Br]{\scalebox{.8}{$\CM(f)\CM(e)x$}}
\psfrag{u}[Bl][Bl]{\scalebox{.9}{$f$}}
\psfrag{e}[Bl][Bl]{\scalebox{.9}{$e$}}
\rsdraw{.45}{.9}{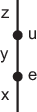}= \;
\psfrag{x}[Bl][Bl]{\scalebox{.8}{$x$}}
\psfrag{a}[Bl][Bl]{\scalebox{.8}{$\CM(fe)x$}}
\psfrag{v}[Bl][Bl]{\scalebox{.9}{$fe$}}
\rsdraw{.45}{.9}{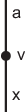} \quad, \qquad \qquad
\psfrag{z}[Bl][Bl]{\scalebox{.8}{$xy$}}
\psfrag{n}[Bl][Bl]{\scalebox{.8}{$\CM\bigl(e\lact{x}{\!f}\bigr)xy$}}
\psfrag{a}[Bl][Bl]{\scalebox{.8}{$\CM(f)y$}}
\psfrag{c}[Br][Br]{\scalebox{.8}{$\CM(e)x$}}
\psfrag{x}[Br][Br]{\scalebox{.8}{$x$}}
\psfrag{y}[Bl][Bl]{\scalebox{.8}{$y$}}
\psfrag{u}[Bl][Bl]{\scalebox{.9}{$f$}}
\psfrag{e}[Br][Br]{\scalebox{.9}{$e$}}
\psfrag{v}[Br][Br]{\scalebox{.9}{$e\lact{x}{\!f}$}}
\rsdraw{.45}{.9}{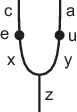} \quad \; = \; \quad \rsdraw{.45}{.9}{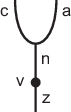} \quad \quad.
$$
The fact that $\phi_{x,e}$ is an algebra homomorphism is depicted as
$$
\psfrag{x}[Bl][Bl]{\scalebox{.8}{$x$}}
\psfrag{u}[Bl][Bl]{\scalebox{.9}{$e$}}
\psfrag{z}[Bl][Bl]{\scalebox{.8}{$\CM(e)x$}}
\psfrag{c}[Br][Br]{\scalebox{.8}{$x$}}
\psfrag{a}[Br][Br]{\scalebox{.8}{$\CM(e)x$}}
\psfrag{e}[Br][Br]{\scalebox{.9}{$e$}}
\rsdraw{.45}{.9}{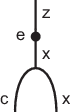}\;\, = \;\,\quad \rsdraw{.45}{.9}{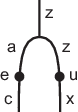}\quad \qquad \text{and} \qquad \quad
\psfrag{x}[Br][Br]{\scalebox{.8}{$x$}}
\psfrag{y}[Br][Br]{\scalebox{.8}{$\CM(e)x$}}
\psfrag{z}[Bl][Bl]{\scalebox{.8}{$\CM(e)x$}}
\psfrag{e}[Bl][Bl]{\scalebox{.9}{$e$}}
\rsdraw{.45}{.9}{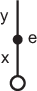}\; = \;\, \rsdraw{.45}{.9}{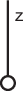} \quad \;.
$$
The commutativity of a $\CM$-action with the antipode is depicted as
$$
\psfrag{e}[Br][Br]{\scalebox{.9}{$e$}}
\psfrag{u}[Br][Br]{\scalebox{.9}{$\lact{{x^{-1}}}{\!(}e^{-1})$}}
\psfrag{z}[Bl][Bl]{\scalebox{.8}{$\CM(e)x$}}
\psfrag{x}[Bl][Bl]{\scalebox{.8}{$x$}}
\psfrag{c}[Bl][Bl]{\scalebox{.8}{$x^{-1}$}}
\psfrag{a}[Bl][Bl]{\scalebox{.8}{$x^{-1}\CM(e^{-1})$}}
\rsdraw{.45}{.9}{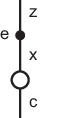}\;= \;\,
\rsdraw{.45}{.9}{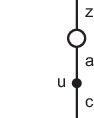}\; \quad \quad  \quad \;.
$$

\subsection{Hopf crossed module-coalgebras}\label{sect-Hopf-crossed-module-coalgebras-def}
A \emph{Hopf $\CM$-coalgebra} (over $\kk$)  is a Hopf $H$-coalgebra  (over $\kk$) endowed with a $\CM$-action.  

A Hopf $\CM$-coalgebra $A = \{ A_x \}_{x \in H}$ is \emph{of finite type} if each $A_x$ is finite-dimensional. It is \emph{involutory} if its antipode $S=\{S_x \}_{x \in H}$ satisfies $S_x  S_{x^{-1}} = \id_{A_x}$ for all $x \in H$, which is depicted as
$$
\psfrag{x}[Bl][Bl]{\scalebox{.8}{$x$}}
\psfrag{c}[Bl][Bl]{\scalebox{.8}{$x^{-1}$}}
\rsdraw{.45}{.9}{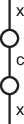}\quad  = \;\; \rsdraw{.45}{.9}{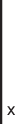} \;.
$$

\subsection{Particular cases}\label{sect-Hopf-Xi-coalg-particular}
1. Given a group $H$, the trivial map $1 \to H$ is a crossed module and the notion of a Hopf $(1\to H)$-coalgebra  agrees with that of a Hopf $H$-coalgebra.  

2. Let $E$ be an abelian group, so that the trivial map $E \to 1$ is a crossed module. Then there is a bijective correspondence between:
\begin{itemize}
\item   Hopf $(E\to 1)$-coalgebras of finite type,
\item   finite-dimensional Hopf algebras $A$ endowed with a group homomorphism from $E$ to the center of the group $G(A^*)=\Hom_{\text{alg}}(A,\kk)$ of grouplike elements of the Hopf algebra $A^*$ dual to $A$.
\end{itemize}
In this correspondence, the $(E\to 1)$-action $\phi=\{\phi_{e} \co A \to A \}_{e \in E}$ associated with a group homomorphism $\rho \co E \to Z(G(A^*))$ is given by $\phi_{e}=(\rho_e \otimes \id_A)\Delta$, where $\Delta$ is the coproduct of $A$.

\subsection{(Co)opposite Hopf crossed module-coalgebras}\label{sect-opposite-Xi-HA}
Let $A=\{A_x\}_{x \in H}$ be a  Hopf $\CM$-coalgebra with coproduct $\Delta$, antipode $S$, and $\CM$-action $\phi$.

The Hopf $\CM$-coalgebra \emph{opposite to $A$} is the family  $A^\opp=\{A^\opp_x\}_{x \in H}$, where $A^\opp_x$ denotes the opposite algebra to $A_x$, endowed with the coproduct, counit, and $\CM$\ti action of $A$ and with  the antipode $S^\opp=\{ S^\opp_x=S^{-1}_{x^{-1}} \}_{x \in H}$.

The Hopf $\CM$-coalgebra \emph{coopposite to $A$} is the family  $A^\cop=\{A^\cop_x\}_{x \in H}$, where $A^\cop_x=A_{x^{-1}}$ as an algebra, endowed with the counit of $A$ and the coproduct $\Delta^\cop$, antipode $S^\cop$, and $\CM$-action $\phi^\cop$ defined by 
$$
\Delta^\cop_{x,y}=\sigma_{A_{y^{-1}},A_{x^{-1}}} \Delta_{y^{-1},x^{-1}}, \quad S^\cop_x=S^{-1}_x , \quad \phi^\cop_{x,e}=\phi_{x^{-1},\lact{x^{-1}}{\!(}e^{-1})}
$$
for all $x,y \in H$ and $e \in E$.

\subsection{Integrals}\label{sect-integrals} 
Let $A=\{A_x\}_{x \in H}$ be an involutory Hopf $\chi$-coalgebra of finite type. 
Recall that a \emph{two-sided integral element} of the Hopf algebra $A_1$ is an element $\Lambda \in A_1$ such that for all $a \in A_1$,
$$
\Lambda a =\varepsilon(a)  \Lambda = a \Lambda.
$$
A \emph{two-sided $\chi$-integral on $A$} is a family of $\kk$-linear forms $\lambda = \{\lambda_x \colon A_x \to \kk\}_{x \in H}$ such that for all $x, y \in H$ and $e \in E$,
$$
(\id_{A_x} \otimes \lambda_y) \Delta_{x,y}=\eta_x \lambda_{xy}, \quad (\lambda_x \otimes \id_{A_y})\Delta_{x,y} = \eta_y \lambda_{xy}, \quad 
\lambda_{\chi(e)x} \phi_{x,e} = \lambda_x.
$$

\begin{lem}\label{lem-existence-Xi-integrals}
Assume that the characteristic of $\kk$ does not divide $\dim_\kk(A_1)$.  Then there is a unique two-sided integral element $\Lambda$ of $A_1$ and a unique two-sided $\chi$\ti integral  $\lambda = \{\lambda_x\}_{x \in H}$ on $A$ such that for all $x \in H$,
$$
\lambda_x(1_x)=\lambda_1(\Lambda)= \varepsilon(\Lambda)=\dim_\kk(A_1) 1_\kk.
$$
Moreover, the integral $\Lambda$ is cosymmetric, the $\chi$-integral~$\lambda$ is symmetric, and both are $S$-invariant:  
$$
\Delta^{\mathrm{cop}}_{x,x^{-1}}(\Lambda)=\Delta_{x^{-1},x}(\Lambda), \quad   \lambda_x(ba)=\lambda_x(ab), \quad   S_1(\Lambda)=\Lambda, \quad  \lambda_x S_x=\lambda_{x^{-1}}
$$
for all $x \in H$ and $a,b \in A_x$, where $\Delta^{\mathrm{cop}}_{x,x^{-1}}$ is defined in Section~\ref{sect-opposite-Xi-HA}. 
\end{lem}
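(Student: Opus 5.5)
The plan is to reduce to known results for involutory Hopf $H$-coalgebras (\cite{Vi2,Vi3}) and then handle the $\CM$-action separately.

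\emph{Step 1: the neutral component.} $A_1$ is a finite-dimensional involutory Hopf algebra and $\dim_\kk(A_1)$ is invertible in $\kk$. By Larson--Radford, $A_1$ is then semisimple and cosemisimple and unimodular, and its dual is unimodular as well. Hence there is a two-sided integral $\Lambda$, unique up to scalar. We normalize it by $\varepsilon(\Lambda)=\dim_\kk(A_1)1_\kk$, which is possible since $\varepsilon(\Lambda)\neq 0$ by semisimplicity. Similarly, the two-sided integral $\lambda_1$ on $A_1$ can be taken to be the trace of left multiplication: $\lambda_1(a)=\mathrm{tr}(L_a)$. This is the standard fact that for involutory $A_1$ the trace form is an integral, and it gives $\lambda_1(1)=\dim A_1$. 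It is symmetric because it is a trace, and $\lambda_1(\Lambda)=\dim A_1$ because $L_\Lambda=\eta\varepsilon$ scaled by $\varepsilon(\Lambda)$ has trace $\varepsilon(\Lambda)$ times the rank-one trace, hence equals $\dim A_1$. Cosymmetry and $S$-invariance of $\Lambda$ follow from uniqueness up to scalar: $S_1(\Lambda)$ is again a two-sided integral with the same counit value. For cosymmetry we use $S^2=\id$ together with the formula for the trace form on the dual.

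\emph{Step 2: the $H$-integral.} We then invoke the Hopf $H$-coalgebra theory (\cite{Vi2}): for finite-type involutory $A$ with $A_1$ as above, the family $\lambda_x(a)=\mathrm{tr}(L_a\colon A_x\to A_x)$, possibly rescaled, is a two-sided $H$-integral. The key identity $\dim A_x=\dim A_1$ comes from the fact that $\Delta_{x,x^{-1}}$ and the antipode make each $A_x$ a Hopf module-like object over $A_1$, so that $A_x\cong A_1$ as vector spaces when $A_x\neq 0$. (If $A_x=0$ the claimed normalization is vacuous, since then $1_x=0$; this case needs care.) The integral identity $(\id\otimes\lambda_y)\Delta_{x,y}=\eta_x\lambda_{xy}$ for trace-type forms is exactly the argument from the Larson--Radford trace formula, run with the $H$-graded coproduct. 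Uniqueness of $\lambda$ would follow from the fact that the space of $H$-integrals has each component $\lambda_x$ determined by $\lambda_1$, via $\lambda_x(a)1_x=(\id\otimes\lambda_1)\Delta_{x,1}$ composed appropriately. Concretely, we would evaluate the left integral identity on $a\Lambda$-type elements to pin down $\lambda_x$ from $\lambda_1$ and $\Lambda$. Symmetry holds since the $\lambda_x$ are traces, and $\lambda_xS_x=\lambda_{x^{-1}}$ holds since $S$ is an anti-isomorphism and traces are preserved under anti-isomorphisms.

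\emph{Step 3: $\CM$-invariance.} The map $\phi_{x,e}\colon A_x\to A_{\CM(e)x}$ is an algebra isomorphism, so it intertwines the left multiplication operators: $L_{\phi(a)}=\phi L_a\phi^{-1}$. Hence $\mathrm{tr}(L_{\phi(a)})=\mathrm{tr}(L_a)$, which gives $\lambda_{\CM(e)x}\phi_{x,e}=\lambda_x$ directly.

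The main obstacle is Step 2: proving that the trace forms on each $A_x$ satisfy the $H$-graded integral identities with exactly the normalization $\lambda_x(1_x)=\dim A_1$. This requires $\dim A_x=\dim A_1$ and a graded version of the Larson--Radford trace computation, for which I would closely follow \cite{Vi2}.
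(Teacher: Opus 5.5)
\textbf{The gap is in Step 2, the step you flag as the main obstacle.} Nothing in the proposal shows that $\lambda_x=\mathrm{tr}(L_{-}\colon A_x\to A_x)$ satisfies $(\id_{A_x}\otimes\lambda_y)\Delta_{x,y}=\eta_x\lambda_{xy}$ and its mirror identity beyond the neutral component. A ``graded Larson--Radford computation'' does not follow from involutivity alone, because the trace formula it rests on needs a nonzero two-sided $H$-integral as input.

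The workable order is the reverse of yours:
\begin{enumerate}
\item Take the two-sided $H$-integral $\mu$ that the paper extracts from \cite{Vi2}: cosemisimplicity, two-sidedness, and symmetry plus $S$-invariance via \cite[Theorem~4.2]{Vi2}. Normalize $\mu_1(\Lambda)=1$.
\item For $b\in A_{x^{-1}}$, show $\Lambda_{(1,x)}\otimes b\Lambda_{(2,x^{-1})}=S_x(b)\Lambda_{(1,x)}\otimes\Lambda_{(2,x^{-1})}$. This gives $b=\mu_x\bigl(S_x(b)\Lambda_{(1,x)}\bigr)\Lambda_{(2,x^{-1})}$, hence $\mathrm{tr}(f)=\mu_x\bigl(S_x(f(\Lambda_{(2,x^{-1})}))\Lambda_{(1,x)}\bigr)$ for $f\in\mathrm{End}_\kk(A_{x^{-1}})$.
\item Take $f=L_b$. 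Anti-multiplicativity, symmetry of $\mu_x$, the antipode axiom and $S$-invariance give $\mathrm{tr}(L_b)=\varepsilon(\Lambda)\mu_{x^{-1}}(b)$.
\end{enumerate}
So the trace forms are a consequence of the graded integral theory the paper cites, not a substitute for it.

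With that step supplied, the rest of your plan works and is in places neater than the paper:
\begin{itemize}
\item Uniqueness follows from $(\id_{A_x}\otimes\lambda_1)\Delta_{x,1}=\eta_x\lambda_x$ and one-dimensionality of integrals on $A_1$.
\item $\CM$-invariance follows from $L_{\phi_{x,e}(a)}=\phi_{x,e}L_a\phi_{x,e}^{-1}$; the paper needs \cite[Theorem~9.2]{SV2} for this.
\item $\lambda_1(\Lambda)=\mathrm{tr}(L_\Lambda)=\varepsilon(\Lambda)$, because $L_\Lambda(a)=\varepsilon(a)\Lambda$ (not $\eta\varepsilon$). This replaces the paper's final trace-formula computation.
\end{itemize}

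\textbf{Three further problems.}

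First, ``traces are preserved under anti-isomorphisms'' is false. For $a\in A_{x^{-1}}$ one has $L_{S_x(a)}=S_xR_aS_x^{-1}$, so $S_x$ carries the left regular trace to the right regular one. These differ in general: for upper triangular $2\times 2$ matrices, $\mathrm{tr}(L_{e_{11}})=2$ while $\mathrm{tr}(R_{e_{11}})=1$. The identity $\lambda_xS_x=\lambda_{x^{-1}}$ has to come from $\mu$ as above.

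Second, cosymmetry is required for every $x\in H$, i.e.\ $\sigma_{A_x,A_{x^{-1}}}\Delta_{x,x^{-1}}(\Lambda)=\Delta_{x^{-1},x}(\Lambda)$, whereas your sentence only concerns $A_1$. It follows by comparing the dual-basis identity above with its mirror $b=\mu_x\bigl(\Lambda_{(2,x)}S_x(b)\bigr)\Lambda_{(1,x^{-1})}$, using symmetry of $\mu_x$ and nondegeneracy of the pairing. This is the role of \cite[Corollary~4.4]{Vi2} in the paper.

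Third, $\dim_\kk A_x=\dim_\kk A_1$ for $A_x\neq 0$ follows cleanly from the bijection $A_x\otimes A_x\to A_x\otimes A_1$, $a\otimes b\mapsto ab_{(1,x)}\otimes b_{(2,1)}$, with inverse $a\otimes c\mapsto aS_x(c_{(1,x^{-1})})\otimes c_{(2,x)}$. If $A_x=0$, the normalization is not vacuous but false, since $\lambda_x(1_x)=0\neq\dim_\kk(A_1)1_\kk$. So the statement tacitly assumes every $A_x$ is nonzero, as does the paper's appeal to \cite[Theorem~5.4]{Vi2}.
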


\begin{proof}
Since $A_1$ is a finite-dimensional involutory Hopf algebra and $\dim_\kk(A_1) 1_\kk \neq 0$, \cite[Corollary~2.6]{LR} gives that $A_1$ is semisimple (and so unimodular) and cosemisimple. In particular, by Maschke's theorem for Hopf algebras,  there is a unique nonzero two-sided integral $\Lambda$ of $A_1$ such that $\varepsilon(\Lambda)=\dim_\kk(A_1) 1_\kk$.  Since $S_1(\Lambda)$ is also a two-sided integral of $A_1$ verifying $\varepsilon(S_1(\Lambda))=\varepsilon(\Lambda)=\dim_\kk(A_1) 1_\kk$, the uniqueness of $\Lambda$ implies that $S_1(\Lambda)=\Lambda$.

Since $A$ is of finite type and $A_1$ is cosemisimple, the Hopf $H$-coalgebra $A$ is cosemisimple (by \cite[Corollary~5.5]{Vi2}) and so the $\chi$-integrals on $A$ are two-sided (by \cite[Corollary~5.7]{Vi2}). Then it follows from \cite[Theorem~9.2]{SV2} that the space of two-sided $\chi$-integrals on $A$ is one-dimensional. Consequently, using \cite[Theorem~5.4]{Vi2}, there is a unique two-sided $\chi$-integral $\kappa = \{\kappa_x\}_{x \in H}$ on $A$ such that $\kappa_x(1_x)=1_\kk$ for all $x \in H$. Then the 
two-sided $\chi$-integral $\lambda = \dim_\kk(A_1) \kappa$   on $A$ verifies that $\lambda_x(1_x)=\dim_\kk(A_1) \kappa_x(1_x) =\dim_\kk(A_1) 1_\kk$ for all $x \in H$. Since $A_1$ is unimodular, $A$ is involutory, and the distinguished $H$-grouplike element of $A$ is trivial (by \cite[Corollary~5.7]{Vi2}), it follows from \cite[Theorem~4.2]{Vi2} that $\lambda$ is symmetric and $S$\ti invariant, and it follows from \cite[Corollary~4.4]{Vi2} that $\Lambda$ is cosymmetric. 

Finally, $\lambda_1(\Lambda) \neq 0$ (by \cite[Proposition~1.1]{LR}) and so $\Lambda'=\lambda_1(\Lambda)^{-1} \Lambda$ is a two-sided integral of $A_1$ such that  $\lambda_1(\Lambda')=1_\kk$. Using \cite[Theorem~2.5]{LR}, we obtain
$$\dim_\kk(A_1) 1_\kk=\mathrm{Tr}(\id_{A_1})=\mathrm{Tr}(S_1^2)=\lambda_1(1_1)\varepsilon(\Lambda')=\dim_\kk(A_1)\lambda_1(\Lambda)^{-1} \varepsilon(\Lambda).$$
Since $\varepsilon(\Lambda)=\dim_\kk(A_1) 1_\kk\neq 0$, we conclude that $\lambda_1(\Lambda)=\dim_\kk(A_1)1_\kk$.
\end{proof}

We depict $\Lambda \in A_1$ and $\lambda_x \co A_x \to \kk$ as in Lemma~\ref{lem-existence-Xi-integrals} by
$$
\psfrag{x}[Bl][Bl]{\scalebox{.8}{$x$}}
\psfrag{1}[Bl][Bl]{\scalebox{.8}{$1$}}
\Lambda=\rsdraw{.45}{.9}{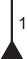} \quad \text{and} \quad \lambda_x=\rsdraw{.45}{.9}{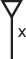} \;.
$$
Their properties are then depicted as follows:
\begin{gather*}
\psfrag{a}[Br][Br]{\scalebox{.8}{$x$}}
\psfrag{x}[Bl][Bl]{\scalebox{.8}{$x$}}
\psfrag{y}[Bl][Bl]{\scalebox{.8}{$y$}}
\psfrag{z}[Bl][Bl]{\scalebox{.8}{$xy$}}
\rsdraw{.45}{.9}{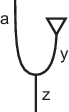}\;\, = \; \rsdraw{.45}{.9}{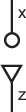} \;, \qquad
\psfrag{a}[Br][Br]{\scalebox{.8}{$x$}}
\psfrag{x}[Bl][Bl]{\scalebox{.8}{$y$}}
\psfrag{y}[Bl][Bl]{\scalebox{.8}{$y$}}
\psfrag{z}[Bl][Bl]{\scalebox{.8}{$xy$}}
\rsdraw{.45}{.9}{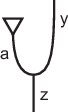} = \; \rsdraw{.45}{.9}{Cointeg-ppt2} \;, \qquad
\psfrag{a}[Br][Br]{\scalebox{.8}{$x$}}
\psfrag{x}[Bl][Bl]{\scalebox{.8}{$x$}}
\rsdraw{.45}{.9}{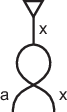} = \; \rsdraw{.45}{.9}{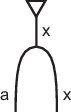} \;, \\[.8em]
\psfrag{u}[Bl][Bl]{\scalebox{.8}{$1$}}
\psfrag{v}[Br][Br]{\scalebox{.8}{$1$}}
\rsdraw{.45}{.9}{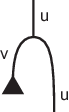}  \!= \;\, \rsdraw{.45}{.9}{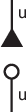}\, \; =  \rsdraw{.45}{.9}{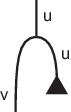} \;\;, \qquad  
\psfrag{u}[Bl][Bl]{\scalebox{.8}{$1$}}
\psfrag{x}[Bl][Bl]{\scalebox{.8}{$x$}}
\psfrag{c}[Br][Br]{\scalebox{.8}{$x^{-1}$}}
\psfrag{a}[Br][Br]{\scalebox{.8}{$x$}}
\psfrag{z}[Bl][Bl]{\scalebox{.8}{$x^{-1}$}}
\rsdraw{.45}{.9}{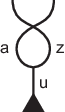} \;\;\; = \; \rsdraw{.45}{.9}{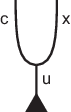} \;, \\[.8em]
\psfrag{x}[Bl][Bl]{\scalebox{.8}{$x$}}
\psfrag{a}[Br][Br]{\scalebox{.8}{$x$}}
\psfrag{e}[Bl][Bl]{\scalebox{.9}{$e$}}
\psfrag{z}[Br][Br]{\scalebox{.8}{$\CM(e)x$}}
\rsdraw{.45}{.9}{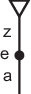}  = \rsdraw{.45}{.9}{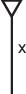} \;\,, \qquad 
\psfrag{c}[Br][Br]{\scalebox{.8}{$x^{-1}$}}
\psfrag{a}[Br][Br]{\scalebox{.8}{$x$}}
\psfrag{z}[Bl][Bl]{\scalebox{.8}{$x^{-1}$}}
\rsdraw{.45}{.9}{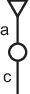}  \, =  \rsdraw{.45}{.9}{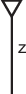} \; \;\;\,, \qquad  
\psfrag{u}[Bl][Bl]{\scalebox{.8}{$1$}}
\psfrag{v}[Br][Br]{\scalebox{.8}{$1$}}
\rsdraw{.45}{.9}{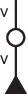} \, =  \rsdraw{.45}{.9}{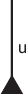} \;\,, \\[.8em]
\psfrag{v}[Br][Br]{\scalebox{.8}{$1$}}
\psfrag{a}[Br][Br]{\scalebox{.8}{$x$}}
\rsdraw{.45}{.9}{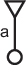} = \; \rsdraw{.45}{.9}{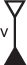} = \; \rsdraw{.45}{.9}{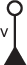}  = \dim_\kk(A_1)1_\kk\,.
\end{gather*}

\subsection{Example}\label{ex-Hopf-xi-from-bicharacter}
Let  $E$ be an abelian group and  $G$ be a finite group. Recall that the group algebra $\kk[G]$ is a finite-dimensional Hopf algebra with copro\-duct~$\Delta$, counit~$\varepsilon$, and antipode $S$ defined by $\Delta(g)=g \otimes g$, $\varepsilon(g)=1_\kk$, and~$S(g)=g^{-1}$ for all $g \in G$. Note that group homomorphisms 
$$
E \to Z\bigl(\Hom_{\text{alg}}(\kk[G],\kk)\bigr)=\Hom_{\text{group}}(G,\kk^*)
$$ 
are in bijective correspondence with bicharacters $G \times E \to \kk^*$. 
Then,  by Section~\ref{sect-Hopf-Xi-coalg-particular}, 
such a bicharacter $\omega \co G \times E \to \kk^*$ gives rise to a Hopf $(E\to 1)$-coalgebra of finite type denoted by $\kk^\omega[G]$. It is involutory and its $\CM$-action  $\phi \co E \to \Aut_{\kk}(\kk[G])$ is computed by $\phi_e(g)= \omega(g,e) g$ for all $e \in E$ and $g \in G$. When the characteristic of~$\kk$ does not divide $\dim_\kk(\kk[G])=|G|$, the integral element $\Lambda$ of $(\kk^\omega[G])_1=\kk[G]$ and the 
$(E\to 1)$-integral  $\lambda = \{\lambda_1\}$ given by Lemma~\ref{lem-existence-Xi-integrals} are computed by 
$$
\Lambda=\sum_{g \in G}g \quad \text{and} \quad \lambda_1(g)=\delta_{g,1} |G|\, 1_\kk.
$$

\subsection{Example}\label{ex-Hopf-Z4-to-Z2} 
Let $\CM\co \Z/4\Z \to \Z/2\Z$ be the crossed module from Section~\ref{ex-Lens-labelings-Z4Z-22Z}. Recall that $\Z/2\Z=\{0,1\}$ acts on $\Z/4\Z=\{\bar{0},\bar{1},\bar{2},\bar{3}\}$ by $\lact{0}{\bar{n}}=\bar{n}$ and $\lact{1}{\bar{n}}=-\bar{n}$, and that
$\CM(\bar{n})=0$. Assume that the characteristic of the field $\kk$ is not 2. Consider the \kt algebras~$A_0$ generated by $a$ and~$A_1$ generated by $u,v$ subject to the relations
$$
a^4=1, \quad u^2=1=v^2, \quad vu=-uv.
$$
Both $A_0$ and $A_1$ have  dimension 4 with respective basis $\{1,a,a^2,a^3\}$ and $\{1,u,v,uv\}$. (Note that if $\kk$ has a primitive fourth root of unity, then $A_0 \cong \kk^4$ and $A_1 \cong M_2(\kk)$ as \kt algebras, and $A_0 \oplus A_1$ is the Kac-Paljutkin Hopf algebra.)
Define algebra morphisms $\Delta_{x,y}\co A_{x+y} \to A_x \otimes A_y$ for $x,y \in\Z/2\Z$ and $\varepsilon \co A_0 \to \kk$ by setting $\varepsilon(a)=1$ and
\begin{align*}
&\Delta_{0,0}(a)=(a \otimes a)\Omega(a^2,a^2),  && \Delta_{0,1}(u)=(a \otimes u)\Omega(a^2,v), && \Delta_{0,1}(v)=a^2 \otimes v, \\
& \Delta_{1,1}(a)=(u \otimes u)\Omega(-v,v), && \Delta_{1,0}(u)=(u \otimes a)\Omega(-v,a^2), && \Delta_{1,0}(v)=v \otimes a^2,
\end{align*}
where $$\Omega(s,t)=\frac{1}{2}\bigl(1 \otimes 1 + s \otimes 1 + 1 \otimes t - s \otimes t\bigr).$$  
Define anti-multiplicative $\kk$-linear isomorphisms $S_0\co A_0 \to A_0$ and $S_1\co A_1 \to A_1$ by setting
$$
S_0(a)=a, \quad S_1(u)=u, \quad S_1(v)=v.
$$
For $\bar{n} \in \Z/4\Z$, consider the algebra morphisms $\phi_{0, \bar{n}} \co A_0 \to A_0$ and $\phi_{1, \bar{n}} \co A_1 \to A_1$ defined by 
$$
\phi_{0, \bar{n}}(a)=(-1)^{n}a, \quad \phi_{1, \bar{n}}(u)=(-1)^n u, \quad \phi_{1, \bar{n}}(v)=v.
$$
Then $A=\{A_0,A_1\}$ is an involutory Hopf $\CM$-coalgebra of finite type with  $\CM$-action $\phi=\{\phi_{x,\bar{n}}\}_{(x, \bar{n}) \in \Z/2\Z \times \Z/4\Z}$. Note that the characteristic of $\kk$ does not divide $\dim_\kk(A_0)=4$. Then Lemma~\ref{lem-existence-Xi-integrals} applies and the two-sided integral element $\Lambda$ of $A_0$ and the two-sided $\chi$-integral  $\lambda = \{\lambda_0, \lambda_1\}$ on $A$ given by this lemma are computed by
\begin{gather*}
\Lambda=1+a+a^2+a^3, \qquad \lambda_0(1)=\lambda_1(1)=4, \\
\lambda_0(a)=\lambda_0(a^2)=\lambda_0(a^3)=\lambda_1(u)=\lambda_1(v)=\lambda_1(uv)=0.
\end{gather*}

\section{Invariants of crossed manifolds}\label{sect-invariant-Xi-manifolds}

Throughout this section, $\CM \co E \to H$ is a crossed module and $A=\{A_x\}_{x \in H}$ is an involutory Hopf $\chi$-coalgebra of finite type such that the characteristic of $\kk$ does not divide $\dim_\kk(A_1)$. We construct from this data  a topological invariant of $\CM$\ti manifolds. Its definition goes by associating tensors (in the spirit of Kuperberg~\cite{Ku}) with $\CM$-Heegaard diagrams
of $\CM$-manifolds. We prove by examples that this invariant is nontrivial.

\subsection{An invariant of $\CM$-manifolds}\label{sect-def-invariant}
Let $(M,g\in[M,B\CM])$ be a $\CM$-manifold. Pick a $\CM$-Hee\-gaard diagram $(D,(\alpha,\beta))$ of $(M,g)$, where $D=(\Sigma,\up,\low)$. For an intersection point~$s$ of~$D$, we denote by $\alpha(s)\in H$ the label of the upper circle containing $s$ and set $\nu_s=1$ if $s$ is positive and $\nu_s=-1$ otherwise (see Section~\ref{sect-Heegaard-diag}). Consider the integral element $\Lambda \in A_1$ and the $\chi$-integral  $\lambda = \{\lambda_x\}_{x \in H}$ associated to $A$ by Lemma~\ref{lem-existence-Xi-integrals} and their graphical representation given in Section~\ref{sect-integrals}. 

For a lower circle $l \in \low$, the set $\I_l$ of intersection points between $l$ and the upper circles is totally ordered by enumerating its elements  starting from the basepoint of~$l$ and following the orientation of $l$. We associate to $l$ the vector
$$
\psfrag{c}[Br][Br]{\scalebox{.8}{$\alpha(s_1)^{\nu_{s_1}}$}}
\psfrag{u}[Bc][Bc]{\scalebox{.8}{$\alpha(s_2)^{\nu_{s_2}}$}}
\psfrag{x}[Bl][Bl]{\scalebox{.8}{$\alpha(s_n)^{\nu_{s_n}}$}}
\psfrag{a}[Bl][Bl]{\scalebox{.8}{$1$}}
\psfrag{e}[Br][Br]{\scalebox{.9}{$\beta(l)$}}
\psfrag{z}[Bl][Bl]{\scalebox{.8}{$\CM(\beta(l))$}}
\Lambda_l= \rsdraw{.3}{.9}{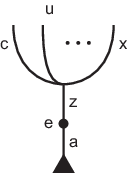}  \; = \Delta^{n}_{\alpha(s_1)^{\nu_{s_1}}, \dots , \alpha(s_n)^{\nu_{s_n}}} \phi_{1,\beta(l)}(\Lambda) \in \bigotimes_{s \in \I_l} A_{\alpha(s)^{\nu_s}}
$$
where $\I_l=\{s_1< \cdots <s_n\}$. 
Here (and in what follows) a tensor product indexed by a totally ordered set is taken following the order of this set. Note that the vector~$\Lambda_l$ is well-defined since $\alpha(s_1)^{\nu_{s_1}} \cdots  \alpha(s_n)^{\nu_{s_n}} =\CM(\beta(l))$ by the first condition of a $\CM$\ti labeling. 
Set also
$$
\ant_l=\bigotimes_{s \in \I_l}\ant_s \co \bigotimes_{s \in \I_l} A_{\alpha(s)^{\nu_s}} \to \bigotimes_{s \in \I_l} A_{\alpha(s)}
$$
where $\ant_s \co A_{\alpha(s)^{\nu_s}} \to A_{\alpha(s)}$ is defined by
$\ant_s=\id_{A_{\alpha(s)}}$ if $s$ is positive and $\ant_s=S_{\alpha(s)}$ if $s$ is negative  (with  $S$ being the antipode of $A$).

For an upper circle $u \in \up$, pick a basepoint for $u$ distinct from the intersection points. Then the set $\I_u$ of intersection points between $u$ and the lower circles is totally ordered by enumerating its elements  starting from the basepoint of $u$ and following the orientation of $u$. We associate to $u$ the linear form
$$
\psfrag{u}[Br][Br]{\scalebox{.8}{$\alpha(u)$}}
\psfrag{a}[Bc][Bc]{\scalebox{.8}{$\alpha(u)$}}
\psfrag{x}[Bl][Bl]{\scalebox{.8}{$\alpha(u)$}}
\lambda_u= \rsdraw{.45}{.9}{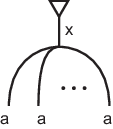}  =\lambda_{\alpha(u)} \mu_{\alpha(u)}^{|u|} \co  \bigotimes_{s \in \I_u} A_{\alpha(s)}=\bigl(A_{\alpha(u)}\bigr)^{\otimes |u|} \to \kk.
$$

Pick total orders for the sets $\up$ and $\low$.  Let $\I$ be the set of intersection points of~$D$. The order of $\up$ induces a total order on $\I$ given by the lexicographic order on $\I=\amalg_{u \in \up} \I_u$ (meaning that $s \in \I_u$ is smaller than $s' \in \I_{u'}$ if $u < u'$ in $\up$ or if $u=u'$ and $s <s'$ in~$\I_u$).
The order of $\low$ induces similarly a total order on~$\I$.
Denote by $\I_\up$ (respectively $\I_\low$) the set $\I$ endowed with the order induced by $\up$ (respectively~$\low$). 
Set 
$$
\Omega_\low=\bigotimes_{l\in \low} \ant_l(\Lambda_l)\in \bigotimes_{s \in \I_\low}A_{\alpha(s)} \quad \text{and} \quad
\lambda_\up=\bigotimes_{u \in \up} \lambda_u \co \bigotimes_{s \in \I_\up} A_{\alpha(s)} \to \kk.
$$
Recall that a symmetric monoidal category provides representations of the symmetric groups. Thus the flip maps of $\kk$-vector spaces associate to the unique increasing map $\I_\low \to \I_\up$  (which is a permutation of $\I$) a \kt linear isomorphism
$$
P_{\low,\up}\co \bigotimes_{s \in \I_\low}A_{\alpha(s)} \to \bigotimes_{s \in \I_\up} A_{\alpha(s)}.
$$
Finally, set
\begin{equation}\label{eq-def-KA}
K_A(M,g)=\dim_\kk(A_1)^{g(\Sigma)-|\up|-|\low|} \; \lambda_\up P_{\low,\up} (\Omega_\low) \in \kk,
\end{equation}
where $g(\Sigma)$ denotes the genus of $\Sigma$.

\begin{thm}\label{thm-colored-Kuperberg}
$K_A(M,g)$ is a topological invariant of $\CM$-manifolds.
\end{thm}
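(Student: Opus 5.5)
By Theorem~\ref{thm-colored-Reidemeister}, two $\CM$-Heegaard diagrams represent equivalent $\CM$-manifolds exactly when they are related by a finite sequence of $\CM$-moves, and every $\CM$-manifold has a $\CM$-Heegaard diagram. So the plan has two parts. First, I show that the scalar in \eqref{eq-def-KA} depends only on the $\CM$-Heegaard diagram, not on the auxiliary choices: the basepoints of the upper circles and the total orders on $\up$ and $\low$. Second, I show it is unchanged under each $\CM$-move (i)--(viii). Together these give Theorem~\ref{thm-colored-Kuperberg}.

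\textbf{Auxiliary choices.} Changing the orders of $\up$ and $\low$ changes $\Omega_\low$, $\lambda_\up$ and $P_{\low,\up}$ by compatible permutations. By the coherence of the symmetric structure of vector spaces, the contraction $\lambda_\up P_{\low,\up}(\Omega_\low)$ does not change. Moving the basepoint of an upper circle $u$ cyclically permutes the inputs of $\lambda_{\alpha(u)}\mu^{|u|}_{\alpha(u)}$, which is harmless because $\lambda$ is symmetric (Lemma~\ref{lem-existence-Xi-integrals}).

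\textbf{The easy moves.} Move (i) obviously preserves the scalar. For move (ii), moving the basepoint of $l$ past $s$ turns $\Lambda_l$ into $\Delta^n$ applied to $\phi_{1,\lact{\alpha(u)^{-\nu}}{\beta(l)}}(\Lambda)$ with the first tensor factor moved to the end. To compare the two, I use the cosymmetry of $\Lambda$, in the form $\Delta^{\cop}_{x,x^{-1}}(\Lambda)=\Delta_{x^{-1},x}(\Lambda)$ extended to $n$-fold coproducts. I combine it with the third $\CM$-action axiom, which relates $\phi$ on the two sides of a coproduct through the twisted element $e\lact{x}{f}$. Together these should express the rotation as the change $\beta(l)\mapsto\lact{\alpha(u)^{-\nu}}{\beta(l)}$. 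For move (iii), reversing a lower circle uses the anti-comultiplicativity of $S$, $S_1(\Lambda)=\Lambda$, involutivity, and the commutation of $\phi$ with $S$. Reversing an upper circle uses $\lambda_xS_x=\lambda_{x^{-1}}$ and the anti-multiplicativity of $S$. For move (iv), the two-point move contributes a factor $\mu(\id\otimes S)\Delta$ or $\mu(S\otimes\id)\Delta$ in the relevant degree, which the antipode axiom collapses. For move (v), stabilization adds one upper circle, one lower circle and one genus, so the normalization changes by $\dim_\kk(A_1)^{1-2}$. The new tensor contributes $\lambda_{\CM(e)}\phi_{1,e}(\Lambda)=\lambda_1(\Lambda)=\dim_\kk(A_1)$, by the $\phi$-invariance of $\lambda$ and Lemma~\ref{lem-existence-Xi-integrals}, and these cancel. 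For move (viii), a trivial upper circle labeled $x$ gives $\lambda_x(1_x)=\dim_\kk(A_1)$, and a trivial lower circle gives $\varepsilon(\Lambda)=\dim_\kk(A_1)$. Each cancels the extra factor $\dim_\kk(A_1)^{-1}$ coming from one more circle.

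\textbf{Handle slides: the main obstacle.} For move (vii), I slide $l_1$ over $l_2$. The new circle $l_1'$ passes through the intersection points of $l_1$ and then those of a parallel copy of $l_2$, with labels $\beta(l_1)\beta(l_2)$. The key identity to prove is the $\CM$-graded version of Kuperberg's integral identity: $\Lambda_{(1)}\otimes\Lambda_{(2)}\Lambda'$ equals $\Lambda\otimes\Lambda'$ after renormalization, carried out for $\phi_{1,e}(\Lambda)$ and $\phi_{1,f}(\Lambda)$ using the algebra-morphism and coproduct axioms of $\phi$. Where $l_2'$ crosses an upper circle $u$ next to $l_2$, the doubled intersection points produce a multiplication $\mu_{\alpha(u)}$ on the $u$ side. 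One then applies the two-sided property $(\id\otimes\lambda_y)\Delta_{x,y}=\eta_x\lambda_{xy}$. The slide (vi) of upper circles is dual: there one uses the left/right $\CM$-integral identity for $\lambda$ together with the relabeling $\alpha(u_2')=\alpha(u_1)^{-1}\alpha(u_2)$, and this is where the grading must be tracked carefully. My first attempt would be to reduce these two cases to the ungraded arguments of \cite{Ku} and \cite{Vi3}, keeping track of the degrees and of the $\phi$-labels through the crossed-module axioms. I expect the bookkeeping of the $\phi$-twists in (vii) and in (ii) to be the hardest part of the proof.
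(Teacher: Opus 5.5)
Your outline follows the paper's proof. You reduce to $\chi$-moves via Theorem~\ref{thm-colored-Reidemeister} and handle the auxiliary choices by the symmetry of $\lambda$ and of vector spaces. You then check each move with the same identities the paper uses:
\begin{itemize}
\item for (ii), the coproduct axiom of $\phi$ together with the cosymmetry of $\Lambda$;
\item for (iii), $S$-invariance, involutivity and anti-(co)multiplicativity;
\item for (iv), the antipode axiom;
\item for (v), $\lambda_{\chi(e)}\phi_{1,e}(\Lambda)=\lambda_1(\Lambda)=\dim_\Bbbk(A_1)$;
\item for (vi), the two-sided $\chi$-integral property of $\lambda$;
\item for (viii), $\lambda_x(1_x)=\varepsilon(\Lambda)=\dim_\Bbbk(A_1)$.
\end{itemize}

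One step in your sketch of (vii) is misplaced. The identity $(\mathrm{id}\otimes\lambda_y)\Delta_{x,y}=\eta_x\lambda_{xy}$ plays no role there; it is the ingredient of (vi), and in (vii) no coproduct ever feeds into $\lambda$.

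What moves the products $\mu_{\alpha(u)}$ created on the upper circles back to the lower side is the multiplicativity of the coproduct. At negative intersection points you also need $\mu^{\mathrm{op}}_x(S_x\otimes S_x)=S_x\mu_{x^{-1}}$. Together these turn the two lower tensors into $\Delta^n$ applied to $\bigl(\phi_{1,ef}(\Lambda)\bigr)_{(2)}\,\phi_{1,f}(\Lambda)$, with the $(1)$-part feeding the points of $l_1$.

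Your Kuperberg-type identity then holds exactly, with no renormalization:
\begin{itemize}
\item $\Delta_{\chi(e),\chi(f)}\phi_{1,ef}=(\phi_{1,e}\otimes\phi_{1,f})\Delta_{1,1}$ by the third $\chi$-action axiom with $x=y=1$;
\item $\phi_{1,f}$ is an algebra map;
\item $\Lambda_{(2)}\Lambda=\varepsilon(\Lambda_{(2)})\Lambda$;
\item counitality then gives $\phi_{1,e}(\Lambda)\otimes\phi_{1,f}(\Lambda)$.
\end{itemize}
With this correction your outline matches the paper's argument.
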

In view of Section~\ref{sect-flat-2-bundles}, Theorem~\ref{thm-colored-Kuperberg} provides a topological invariant of $\CM$-bundles, that is, of 
(flat) principal $\tg_\CM$-bundles over closed connected oriented 3-manifolds (where $\tg_\CM$ is the 2-group associated with $\CM$).

We prove Theorem~\ref{thm-colored-Kuperberg} in Section~\ref{sect-proof-xi-kup}. The proof consists of showing that the right-hand side of~\eqref{eq-def-KA} is independent of the choice of the $\CM$-Heegaard diagram representing~$(M,g)$, the choice of basepoints for the upper circles,  and the choice of the total orders for the sets of upper and lower circles. For that, we use in particular Theorem~\ref{thm-colored-Reidemeister} which relates two $\CM$-Heegaard diagrams of $(M,g)$ via  $\CM$-moves.

We illustrate the computation of the invariant of Theorem~\ref{thm-colored-Kuperberg} in Sections~\ref{ex-compute-Lens}\ti \ref{ex-computation-E-G-pairing} below. In particular, this invariant is nontrivial  and may even distinguish homotopy classes of phantom maps (i.e., of maps inducing trivial homomorphisms on homotopy groups), see for instance Section~\ref{ex-compute-Lens}.

\subsection{Properties}\label{sect-properties-KuA} 
The  invariant $K_A$ of $\CM$-manifolds from Theorem~\ref*{thm-colored-Kuperberg} satisfies the following properties:\\[-1em]

\begin{mylist}{2}
  \item[1] If $g$ is the homotopy class of a constant map, then 
$$K_A(M,g)= \mathrm{Ku}_{A_1}(M),$$ 
where the right-hand side is the Kuperberg invariant \cite{Ku} of $M$ derived from the involutory Hopf algebra $A_1$.
  \item[2] The invariant $K_A$ generalizes the invariant defined in \cite{Vi3}. More precisely, if $E=1$, then $B\CM=B(1\to H)=BH$ is a $K(H,1)$ space  and the invariant $K_A$ of $(1\to H)$-manifolds (or equivalently of flat $H$-bundles over 3-manifolds) is equal to the invariant defined in \cite[Theorem 9]{Vi3} using the involutory Hopf $H$-coalgebra~$A$. As a consequence, the invariant of \cite[Theorem 9]{Vi3} does not depend on the choice of the basepoints (of the flat bundles). 
  \item[3] The invariant $K_A$ of Theorem~\ref{thm-colored-Kuperberg} is multiplicative with respect to the connected sum of $\CM$-manifolds:
    $$
    K_A(M\# M',g\#g')=K_A(M,g)K_A(M',g').
    $$
    This follows directly from the fact that a $\CM$-Heegaard diagram of a connected sum of two $\CM$-manifolds is given by the connected sum of the  $\CM$-Heegaard diagrams of the $\CM$-manifolds. 
  \item[4] The opposite of a $\CM$-manifold  $(M,g)$ is the $\CM$-manifold $(-M,g)$, where $-M$ is~$M$ with the opposite orientation. Then
    $$
    K_A(-M,g)=K_{A^\opp}(M,g)=K_{A^\cop}(M,g)
    $$
    where $A^\opp$ and $A^\cop$ are the Hopf $\CM$-coalgebras opposite and coopposite to $A$ (see Section~\ref{sect-opposite-Xi-HA}).
    Indeed, if $((\Sigma,\up,\low),(\alpha,\beta))$ is a $\CM$-Heegaard diagram for $(M,g)$, then  $((-\Sigma,\up,\low),(\alpha^{-1},\beta))$ is a $\CM$-Heegaard diagram for $(-M,g)$. 
    The first equality follows from the anti-multiplicativity of the antipode $S$ of $A$ and from the $S$-invariance of the $\chi$\ti integral $\lambda$. The second equality follows from the anti-comultiplicativity of~$S$ and from the $S$-invariance of the integral element $\Lambda$ of $A_1$. 
  \item[5] Recall from \cite{SV} that any $\CM$-fusion category $\cc$ with $\dim(\cc^1_1)$ invertible in~$\kk$ defines a state sum invariant $\vert M,g \vert_\cc$ of $\CM$-manifolds.
    When $\kk$ is an algebraically closed field, the category $\mmod_\CM(A)$ of finite-dimensional modules over $A$ is $\CM$\ti fusion (by \cite[Theorem 8.3]{SV2}) and $\dim\bigl(\mmod_\CM(A)^1_1\bigr)=\dim_\kk(A_1)1_\kk$ is invertible. We will prove in a subsequent article that 
    $$
    K_A(M,g)=\dim_\kk(A_1)\, \vert M,g \vert_{\mmod_\CM(A)}.
    $$
    In particular, $\vert M,g \vert_{\mmod_\CM(A)}=\dim_\kk(A_1)^{-1} K_A(M,g)$ can be computed without determining the simple $A$-modules. This equality generalizes the result of \cite{BW2} which relates the Kuperberg invariant of 3-manifolds with the Turaev-Viro-Barrett-Westbury invariant of 3-manifolds.
\end{mylist}

\subsection{Remark}\label{rk-Hopf-xi-algebras}
The notion of a Hopf $\CM$-coalgebra is not self-dual. The dual notion is that of a Hopf $\CM$-algebra (see \cite[Section 7.8]{SV2}).
An invariant of $\CM$-manifolds can be defined from an involutory Hopf $\CM$-algebra of finite type $B=\{B_x\}_{x \in H}$ in a manner similar to that in Section~\ref{sect-def-invariant} (by using the $\CM$-action for upper circles and the graded integral elements for lower circles). We do not elaborate further on this, since this invariant actually corresponds to the invariant $K_{B^*}$, where $B^*=\{B_x^*\}_{x \in H}$ is the Hopf $\CM$-coalgebra dual to $B$.

\subsection{Example}\label{ex-compute-Lens}
Consider the lens space $L(p,q)$ and its Heegaard diagram $D$ given in Section~\ref{ex-Lens}. Recall from  Section~\ref{ex-Lens-labelings} that the set of homotopy classes of maps  $[L(p,q), B\CM]$ is in bijection with the set of orbits $\mathfrak{L}_D/\tg_D$, where
$$
\mathfrak{L}_D=\bigl\{ (x,e) \in H \times E\; \big | \; \CM(e)=x^p \;\, \text{and} \;\, \lact{x}{e}=e \bigr\} \quad \text{and} \quad \tg_D=H \ltimes E,
$$
and that $g_{[x,e]} \in [L(p,q), B\CM]$ denotes the homotopy class of maps associated with an orbit $[x,e]\in \mathfrak{L}_D/\tg_D$. Then
$$
\psfrag{a}[Bl][Bl]{\scalebox{.8}{$1$}}
\psfrag{e}[Br][Br]{\scalebox{.9}{$e$}}
\psfrag{z}[Bl][Bl]{\scalebox{.8}{$\CM(e)=x^p$}}
\psfrag{x}[Bl][Bl]{\scalebox{.8}{$x$}}
\psfrag{I}[Bc][Bc]{\scalebox{.9}{$\mathfrak{S}_x^q$}} 
K_A\bigl(L(p,q),g_{[x,e]}\bigr)=d^{-1}\;  \rsdraw{.55}{.9}{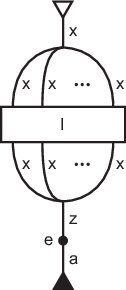}\,  =d^{-1}  \lambda_x \mu_x^p \mathfrak{S}_x^q \Delta^{p}_{x, \dots , x} \phi_{1,e}(\Lambda) \in \kk
$$
where $d=\dim_\kk(A_1)$ and $\mathfrak{S}_x$ is the flip map
$$
\psfrag{x}[Bl][Bl]{\scalebox{.8}{$x$}}
\mathfrak{S}_x=\;  \rsdraw{.45}{.9}{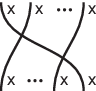}\,  =\sigma_{A_x^{\otimes (p-1)},A_x} \co A_x^{\otimes p} \to A_x^{\otimes p}.
$$
For instance, using the symmetry of the $\chi$-integral~$\lambda$ of $A$ (see Lemma~\ref{lem-existence-Xi-integrals}), the invariant for the 
real projective space $\mathbb{RP}^3=L(2,1)$ is computed by
$$
K_A\bigl(\mathbb{RP}^3,g_{[x,e]}\bigr)=d^{-1}\; \lambda_x \mu_x \Delta_{x,x} \phi_{1,e}(\Lambda).
$$ 
As an example, consider the crossed module $\CM\co \Z/4\Z=\{\bar{0},\bar{1},\bar{2},\bar{3}\} \to \Z/2\Z=\{0,1\}$ and the involutory Hopf $\CM$-coalgebra $A=\{A_0,A_1\}$ of finite type from Section~\ref{ex-Hopf-Z4-to-Z2} and assume that $\kk$ is not of characteristic 2 (so that the characteristic of $\kk$ does not divide $\dim_\kk(A_0)=4$). Recall from Section~\ref{ex-Lens-labelings-Z4Z-22Z} that
$$
\big |[\mathbb{RP}^3, B\CM] \big |=4 \quad \text{with} \quad [\mathbb{RP}^3, B\CM]=\bigl\{ g_{[0,\bar{0}]}, g_{[0,\bar{1}]}, g_{[1,\bar{0}]}, g_{[1,\bar{2}]} \bigr \}.
$$
Using the explicit description of the product, coproduct, $\CM$-action, and integrals of~$A$, we obtain:
$$
K_A\bigl(\mathbb{RP}^3,g_{[0,\bar{0}]}\bigr)=1, \; K_A\bigl(\mathbb{RP}^3,g_{[0,\bar{1}]}\bigr)=0, \; K_A\bigl(\mathbb{RP}^3,g_{[1,\bar{0}]}\bigr)=K_A\bigl(\mathbb{RP}^3,g_{[1,\bar{2}]}\bigr)=\frac{3}{4}.
$$
In particular, $K_A$ distinguishes $g_{[0,\bar{0}]}$ and $g_{[0,\bar{1}]}$ which are phantom maps. Indeed both  induce trivial homomorphisms on homotopy groups: they induce the trivial homomorphism on the fundamental groups (by definition of their labeling) and for higher homotopy groups,  this follows from the facts that $\pi_2(\mathbb{RP}^3)$ and $\pi_n(B\CM)$ are  trivial for $n \geq 3$.

\subsection{Example}\label{ex-compute-Poincare}
Consider the Poincar\'e homology sphere $\mathbb{P}$ and its Heegaard diagram~$D$ given in Section~\ref{ex-Poincare}.  Recall from  Section~\ref{ex-Poincare-labelings} that the set of homotopy classes of maps  $[\mathbb{P}, B\CM]$ is in bijection with the set of orbits $\mathfrak{L}_D/\tg_D$.  Denote by $[x,y,e,f]\in \mathfrak{L}_D/\tg_D$ the class of $(x,y,e,f)\in \mathfrak{L}_D$ and by $g_{[x,y,e,f]} \in [\mathbb{P}, B\CM]$ the corresponding homotopy class. Then
$$
\psfrag{a}[Bl][Bl]{\scalebox{.8}{$1$}}
\psfrag{e}[Br][Br]{\scalebox{.9}{$e$}}
\psfrag{z}[Bl][Bl]{\scalebox{.8}{$\CM(e)$}} 
\psfrag{d}[Br][Br]{\scalebox{.9}{$f$}}
\psfrag{c}[Bl][Bl]{\scalebox{.8}{$\CM(f)$}} 
\psfrag{x}[Bl][Bl]{\scalebox{.8}{$x$}}
\psfrag{s}[Br][Br]{\scalebox{.8}{$x$}}
\psfrag{y}[Bl][Bl]{\scalebox{.8}{$y$}}
\psfrag{b}[Br][Br]{\scalebox{.8}{$y$}}
\psfrag{o}[Bc][Bc]{\scalebox{.8}{$x^{-1}$}}
\psfrag{u}[Bc][Bc]{\scalebox{.8}{$y^{-1}$}}
K_A\bigl(\mathbb{P},g_{[x,y,e,f]}\bigr)=\dim_\kk(A_1)^{-2}\;  \rsdraw{.55}{.9}{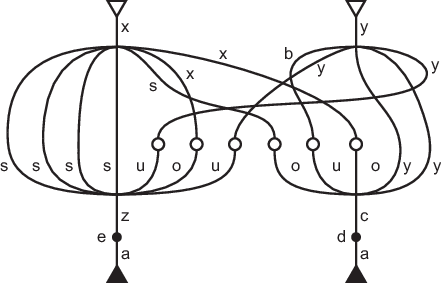} 
$$
where the disks denote the antipode (see Section~\ref{sect-graphical-conventions}).

\subsection{Example}\label{ex-computation-E-G-pairing}
Let $\omega \co G \times E \to \kk^*$ be a bicharacter, where $G$ is a finite group and~$E$ is an abelian group. Consider the involutory Hopf $(E \to 1)$-coalgebra of finite type $\kk^\omega[G]$ from Section~\ref{ex-Hopf-xi-from-bicharacter}. Assume that the characteristic of $\kk$ does not divide
$\dim_\kk((\kk^\omega[G])_1)=|G|$. Then, for any $(E \to 1)$-manifold $(M,g)$,  we have
$$
K_A(M,g)= \sum_{f \in [M,BG]} \!\! I_\omega(M,f,g)
$$
where the scalar $I_\omega(M,f,g)$ is defined as follows. Since $E$ and $\kk^*$ are abelian, the bicharacter $\omega$ induces a group homomorphism $\widetilde{\omega} \co G^\ab \otimes E \to \kk^*$, where $G^\ab$ is the abelianization of the group $G$. Denote by  $f_\sharp \co \pi_1(M)^\ab \to G^\ab$ the group homomorphism induced by $f$. Let $\theta_g$ be the image of $g$ under the isomorphism
$$
[M,B(E \to 1)] \cong [M,K(E,2)] \cong H^2(M,E) \cong H_1(M,E) \cong \pi_1(M)^\ab \otimes E
$$
induced by the Poincar\'e duality and the Hurewicz theorem. Then 
$$
I_\omega(M,f,g)=\widetilde{\omega}\bigl( (f_\sharp \otimes \id_E)(\theta_g) \bigr).
$$

\subsection{Proof of Theorem~\ref*{thm-colored-Kuperberg}}\label{sect-proof-xi-kup}
Given a $\CM$-Heegaard diagram $(D=(\Sigma,\up,\low),(\alpha,\beta))$, any choice of basepoints for the upper circles and of total orders for $\up$ and $\low$ allows us to define the right-hand side of~\eqref{eq-def-KA}, that is, the scalar
\begin{equation}\label{eq-pf-def1}
K_A\bigl(D,(\alpha,\beta)\bigr)=\dim_\kk(A_1)^{g(\Sigma)-|\up|-|\low|} \; \lambda_\up P_{\low,\up} (\Omega_\low) \in \kk.
\end{equation}

First note that $K_A(D,(\alpha,\beta))$ does not depend on these choices. Indeed, independence in the choice of basepoints for the upper circles follows from the fact that the linear form 
$\lambda_u=\lambda_{\alpha(u)} \mu_{\alpha(u)}^{|u|}$  associated with any upper circle $u$ is cyclically symmetric (by the symmetry of the $\chi$-integral~$\lambda$). Independence in the choice of the total orders for $\up$ and $\low$ follows from the symmetry of the category of $\kk$-vector spaces (and in particular from the fact that under the canonical identification $\kk \otimes \kk \cong \kk$, the flip map $\sigma_{\kk,\kk}$ is the identity). 

Next, by Theorem~\ref{thm-colored-Reidemeister}, we only need to verify that $K_A(D,(\alpha,\beta))$ remains unchanged when applying to $(D,(\alpha,\beta))$ one of the $\CM$-moves (i)-(viii) described in Section~\ref{sect-Xi-moves}.
Before proving the invariance under these moves below, we give another useful formulation of $K_A(D,(\alpha,\beta))$. Set 
$$
\Lambda_\low=\bigotimes_{l\in \low}  \Lambda_l, \quad  \ant_\low=\bigotimes_{l\in \low} \ant_l,  \quad \ant_\up=\bigotimes_{u\in \up} \ant_u  \quad \text{where} \quad \ant_u=\bigotimes_{s \in \I_u}\ant_s,
$$
and consider  \kt linear isomorphism 
$$
Q_{\low,\up}\co \bigotimes_{s \in \I_\low} A_{\alpha(s)^{\nu_s}} \to \bigotimes_{s \in \I_\up} A_{\alpha(s)^{\nu_s}}
$$
induced by the flip maps of $\kk$-vector spaces and associated with the unique increasing map $\I_\low \to \I_\up$  (which is a permutation of $\I$).
Then 
$$
\ant_\low(\Lambda_\low)=\bigotimes_{l\in \low} \ant_l(\Lambda_l)=\Omega_\low, \quad \lambda_\up \ant_\up=\bigotimes_{u\in \up} \lambda_u\ant_u, \quad \text{and} \quad  P_{\low,\up}\ant_\low=\ant_\up Q_{\low,\up}.
$$
In particular, we obtain 
\begin{equation}\label{eq-pf-def2}
K_A\bigl(D,(\alpha,\beta)\bigr)=\dim_\kk(A_1)^{g(\Sigma)-|\up|-|\low|} \; \lambda_\up \ant_\up Q_{\low,\up}(\Lambda_\low).
\end{equation}
\vspace{-.4em}

\pfpart{Invariance under orientation preserving diffeomorphisms of $\Sigma$} 
Such a diffeomorphism does not modify either the intersection pattern or the $\chi$-labelings. By transferring (via this diffeomorphism) the basepoints for upper circles and the orders of the sets of upper and lower circles, we deduce that $K_A(D,(\alpha,\beta))$ remains literally the same when applying this $\CM$-move.\\ 

\pfpart{Invariance under moving basepoints}
This move consists of moving the basepoint of a lower circle $l$ across an intersection point $s$ by following its orientation. Denote $\I_l=\{s=s_1< \cdots <s_n\}$ and set $x=\alpha(s_1)^{\nu_1}$, $y=\alpha(s_2)^{\nu_2} \cdots \alpha(s_n)^{\nu_n}$, $e=\beta(l)$, where $\nu_i=\nu_{s_i}$ for $1 \leq i \leq n$. Recall that $\CM(e)=x y$. Before the move, the vector~$\Lambda_l$ associated with  $l$ is  
$$
\Lambda_l= (\id_{A_x} \otimes \delta)\Delta_{x,y}\bigl(\phi_{1,e}(\Lambda)\bigr) \quad \text{where} \quad \delta= \Delta^{(n-1)}_{\alpha(s_2)^{\nu_2}, \dots , \alpha(s_n)^{\nu_n}}.
$$
After the move, the new label of~$l$~is $\lact{(x^{-1})}{e}$ and the vector associated with $l$ is  
$$
\Lambda'_l= (\delta \otimes \id_{A_x})\Delta_{y,x}\bigl(\phi_{1,\lact{x^{-1}}{\!e}}(\Lambda)\bigr).
$$
Now 
\begin{gather*}
\psfrag{z}[Bl][Bl]{\scalebox{.8}{$1$}}
\psfrag{a}[Bl][Bl]{\scalebox{.8}{$x$}}
\psfrag{x}[Bl][Bl]{\scalebox{.8}{$y^{-1}$}}
\psfrag{t}[Bl][Bl]{\scalebox{.8}{$y$}}
\psfrag{e}[Bl][Bl]{\scalebox{.9}{$e$}}
\psfrag{v}[Br][Br]{\scalebox{.9}{$\lact{(x^{-1})}{e}$}}
\psfrag{c}[Br][Br]{\scalebox{.8}{$y$}}
\psfrag{b}[Br][Br]{\scalebox{.8}{$x$}}
\psfrag{u}[Br][Br]{\scalebox{.8}{$y^{-1}$}}
\psfrag{s}[Br][Br]{\scalebox{.8}{$e$}}
\rsdraw{.45}{.9}{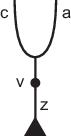} \; \overset{(i)}{=} \;\, \rsdraw{.45}{.9}{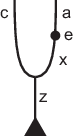} \quad  \overset{(ii)}{=} \quad \, \rsdraw{.45}{.9}{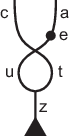} \;\;\, \overset{(iii)}{=} \;\;\, \rsdraw{.45}{.9}{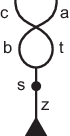}  \;.
\end{gather*}
Here $(i)$ and $(iii)$ follow from the compatibility of the $\CM$-action with the coproduct, and $(ii)$ from the cosymmetry of $\Lambda$. Consequently, 
$$
\Lambda'_l=\sigma_{\bigl(A_{\alpha(s_1)^{\nu_1}}\bigr),\bigl(A_{\alpha(s_2)^{\nu_2}} \otimes \cdots \otimes A_{\alpha(s_n)^{\nu_n}} \bigr)} (\Lambda_l).
$$
This implies that the morphism $Q_{\low,\up} (\Lambda_\low)$, and thus $K_A(D,(\alpha,\beta))$,  remains the same when applying this $\CM$-move.\\

\pfpart{Invariance under orientation reversals} 
Suppose first that the orientation of an upper circle $u$ is reversed (while keeping the basepoints of the upper circles and the total orders for $\up$ and $\low$ the same). Denote by $\bar{u}$ the circle $u$ with the opposite orientation.  The flip maps of $\kk$-vector spaces associate to the unique increasing map $\I_{\bar{u}}\to \I_u$ the \kt linear isomorphisms
$$
\vartheta_u \co \bigotimes_{s \in \I_{\bar{u}}}A_{\alpha(s)} \to \bigotimes_{s \in \I_u} A_{\alpha(s)} \quad \text{and} \quad 
\theta_u \co \bigotimes_{s \in \I_{\bar{u}}}A_{\alpha(s)^{\nu_s}} \to \bigotimes_{s \in \I_u} A_{\alpha(s)^{\nu_s}}
$$ 
Since the sign of each intersection point lying in $u$ is changed into its opposite under this $\CM$-move and the antipode $S$ is involutory, we have: $$\ant_u \theta_u=\vartheta_u S_x^{\otimes n} \ant_{\bar{u}},$$ where $n=|u|$ and $x=\alpha(u)$. Then, using the $S$-invariance of $\lambda$ and the anti-multiplicativity of  $S$, we deduce that 
$$
\lambda_{\bar{u}}\ant_{\bar{u}} = \lambda_{x^{-1}} \mu_{x^{-1}}^{n}\ant_{\bar{u}} = \lambda_{x} S_{x} \mu_{x^{-1}}^{n}\ant_{\bar{u}}  = \lambda_{x} \mu_x^n \vartheta_u  S_x^{\otimes n} \ant_{\bar{u}} = \lambda_u \ant_u \theta_u.
$$
This implies that the morphism $\lambda_\up \ant_\up Q_{\low,\up}$, and thus $K_A(D,(\alpha,\beta))$,  remains the same when reversing the orientation of $u$. 

Suppose next that the orientation of a lower circle $l$ is reversed (while keeping the basepoints of the upper circles and the total orders for $\up$ and $\low$ the same). Denote by~$\bar{l}$ the circle $l$ with the opposite orientation.  Let $\I_l=\{s_1< \cdots <s_n\}$ and $e=\beta(l)$. Set $x_i=\alpha(s_i)$ and $\nu_i=\nu_{s_i}$ for $1 \leq i \leq n$. Recall that $\CM(e)= x_1^{\nu_1} \cdots x_n^{\nu_n}$. 
The flip maps of $\kk$-vector spaces associate to the permutation $k \mapsto (n-k+1)$ of~$\{1, \dots,n\}$ the \kt linear homomorphisms 
$$
A_{x_1} \otimes \cdots \otimes A_{x_n} \xrightarrow{\theta_l} A_{x_n} \otimes \cdots \otimes A_{x_1}, \quad 
A_{x_1^{\nu_1}} \otimes \cdots \otimes A_{x_n^{\nu_n}} \xrightarrow{\vartheta_l} A_{x_n^{\nu_n}} \otimes \cdots \otimes A_{x_1^{\nu_1}}.
$$
Since the sign of each intersection point lying in $l$ is changed into its opposite under this $\CM$-move and the antipode $S$ is involutory, we have: 
$$ 
\theta_l\ant_l= \ant_{\bar{l}}\ps  S_{\bar{l}} \ps \vartheta_l \quad \text{where} \quad S_{\bar{l}}=S_{x_n^{-\nu_n}} \otimes \cdots \otimes S_{x_1^{-\nu_1}}.
$$ 
Now, by the $S$-invariance of $\Lambda$ and the commutativity of the $\CM$-action $\phi$ with $S$,  
$$
\phi_{1,e^{-1}} (\Lambda)=\phi_{1,e^{-1}} S_1(\Lambda)=S_{\CM(e)^{-1}}\phi_{1,e}(\Lambda).
$$
Then, together with the anti-comultiplicativity of  $S$, we deduce that 
\begin{align*}
\ant_{\bar{l}} \ps \Lambda_{\bar{l}}
& = \ant_{\bar{l}} \ps \Delta^n_{x_n^{-\nu_n}, \dots , x_1^{-\nu_1}}  \phi_{1,e^{-1}} (\Lambda) \\
& = \ant_{\bar{l}} \ps \Delta^n_{x_n^{-\nu_n}, \dots , x_1^{-\nu_1}} S_{\CM(e)^{-1}}\phi_{1,e}(\Lambda)\\
& = \ant_{\bar{l}} \ps S_{\bar{l}} \ps  \vartheta_l \Delta^n_{x_1^{\nu_1}, \dots , x_n^{\nu_n}} \phi_{1,e}(\Lambda)\\
& =\theta_l\ant_l \Lambda_l.
\end{align*}
This implies that the morphism $P_{\low,\up}\ant_\low (\Lambda_\low)$, and thus $K_A(D,(\alpha,\beta))$,  remains the same when reversing the orientation of $l$.\\ 
    
\pfpart{Invariance under two-point moves}  
Consider a two-point move between  an upper circle $u \in \up$ and a lower circle $l \in \low$. Up to reversing the orientation of these circles, moving the basepoint of $l$, and appropriately choosing the basepoint of $u$, we can suppose that the two added intersections points are the first two with respect to the orders of $\I_u$ and $\I_l$ and that the first one is positive (and so the second one is negative). The invariance of $K_A(D,(\alpha,\beta))$ under this $\CM$-move is then a consequence of the following equalities, where $x=\alpha(u)$ and $e=\beta(l)$:
\begin{gather*}
\psfrag{z}[Bl][Bl]{\scalebox{.8}{$x^{-1}$}}
\psfrag{c}[Br][Br]{\scalebox{.8}{$x$}}
\psfrag{u}[Br][Br]{\scalebox{.8}{$1$}}
\psfrag{x}[Bl][Bl]{\scalebox{.8}{$x$}}
\psfrag{e}[Bl][Bl]{\scalebox{.9}{$e$}}
\rsdraw{.5}{.9}{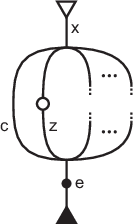} \;\;\; \overset{(i)}{=} \;\; \rsdraw{.5}{.9}{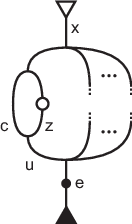}  \;\;\; \overset{(ii)}{=} \;\;\,  \rsdraw{.5}{.9}{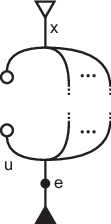} \;\;\, \overset{(iii)}{=} \;\;\,  \rsdraw{.5}{.9}{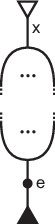} \;\; .
\end{gather*}
Here $(i)$ follows from the (co)associativity of the (co)product, $(ii)$ from the axiom of the antipode, and $(iii)$ from the (co)unitality of the (co)product.\\ 

\pfpart{Invariance under stabilizations}
This move adds a lower circle  labeled by some $e \in E$ and an upper circle  labeled by $\CM(e) \in H$. These added circles have a unique intersection point (which is positive) and so contribute to  $\lambda_\up P_{\low,\up} (\Omega_\low)$ by a factor of
$\lambda_{\CM(e)} \phi_{1,e}(\Lambda)=\lambda_1(\Lambda)= \dim_\kk(A_1)$.
Since this  contribution is  cancelled by the normalization term $\dim_\kk(A_1)^{g(\Sigma)-|\up|-|\low|}$, we deduce that $K_A(D,(\alpha,\beta))$  is invariant under this $\CM$-move.\\ 
    
\pfpart{Invariance under handle slides of upper circles}  
This move slides an upper circle~$u_1$ over another upper circle $u_2$ and creates new upper circles: $u_1' = u_1 \#_b u_2$ and a copy~$u'_2$ of $u_2$. Let $(D',(\alpha',\beta'))$ be the resulting $\CM$-Heegaard diagram. We choose basepoints for  $u_1$ and $u_2$ 
in such a way that the band connects the circles $u_1$ and~$u_2$  just before (with respect to their orientation) these basepoints.
We endow $u'_1$ and~$u'_2$ with the basepoints inherited from $u_1$ and $u_2$. Set $x=\alpha(u_1)$ and $y=\alpha(u_2)$, so that $\alpha(u_1')=x$ and $\alpha(u_2')=x^{-1}y$. Denote 
$\I_{u_2}=\{s_1<\dots<s_n\}$.
It follows from the definitions and the coassociativity of the coproduct that  $K_A(D',(\alpha',\beta'))$ is obtained from the formulation \eqref{eq-pf-def2} of~$K_A(D,(\alpha,\beta))$ by replacing $\lambda_{u_1}\ant_{u_1} \otimes \lambda_{u_2}\ant_{u_2}$ with
$$
\psfrag{x}[Bl][Bl]{\scalebox{.8}{$x$}}
\psfrag{z}[Bl][Bl]{\scalebox{.8}{$x^{-1}y$}}
\psfrag{c}[Br][Br]{\scalebox{.8}{$x$}}
\psfrag{A}[Bc][Bc]{\scalebox{.9}{$S_x^{\kappa_1}$}}
\psfrag{B}[Bc][Bc]{\scalebox{.9}{$S_x^{\kappa_n}$}}
\psfrag{D}[Bc][Bc]{\scalebox{.9}{$S_{x^{-1}y}^{\kappa_1}$}}
\psfrag{E}[Bc][Bc]{\scalebox{.9}{$S_{x^{-1}y}^{\kappa_n}$}}
\psfrag{C}[Bc][Bc]{\scalebox{.9}{$\ant_{u_1}$}}
\psfrag{m}[Bc][Bc]{\scalebox{.9}{$\delta_1$}}
\psfrag{w}[Bc][Bc]{\scalebox{.9}{$\delta_n$}}
\rsdraw{.45}{.9}{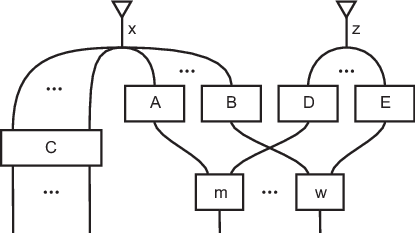} \;\,\;,
$$
where $\kappa_i=0$ and $\delta_i=\Delta_{x,x^{-1}y}$ if $s_i$ is positive, and  $\kappa_i=1$ and $\delta_i=\Delta_{x,x^{-1}y}^\cop$ if $s_i$ is negative. Here, by convention,   $S_z^{0}=\id_{A_z}$ for all $z \in H$.
Now
$$
\psfrag{x}[Bl][Bl]{\scalebox{.8}{$x$}}
\psfrag{v}[Bl][Bl]{\scalebox{.8}{$y$}}
\psfrag{z}[Bl][Bl]{\scalebox{.8}{$x^{-1}y$}}
\psfrag{c}[Br][Br]{\scalebox{.8}{$x$}}
\psfrag{A}[Bc][Bc]{\scalebox{.9}{$S_x^{\kappa_1}$}}
\psfrag{B}[Bc][Bc]{\scalebox{.9}{$S_x^{\kappa_n}$}}
\psfrag{D}[Bc][Bc]{\scalebox{.9}{$S_{x^{-1}y}^{\kappa_1}$}}
\psfrag{E}[Bc][Bc]{\scalebox{.9}{$S_{x^{-1}y}^{\kappa_n}$}}
\psfrag{R}[Bc][Bc]{\scalebox{.9}{$S_y^{\kappa_1}$}}
\psfrag{S}[Bc][Bc]{\scalebox{.9}{$S_y^{\kappa_n}$}}
\psfrag{C}[Bc][Bc]{\scalebox{.9}{$\ant_{u_1}$}}
\psfrag{U}[Bc][Bc]{\scalebox{.9}{$\ant_{u_2}$}}
\psfrag{m}[Bc][Bc]{\scalebox{.9}{$\delta_1$}}
\psfrag{w}[Bc][Bc]{\scalebox{.9}{$\delta_n$}}
\rsdraw{.45}{.9}{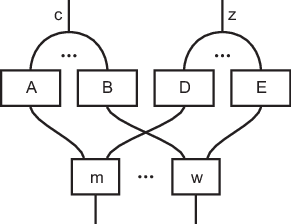} \;\;\,\overset{(i)}{=}\;\;
\rsdraw{.45}{.9}{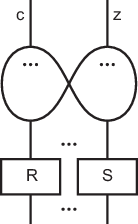} \;\;\overset{(ii)}{=}\;\;
\rsdraw{.45}{.9}{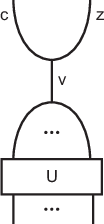} \;.
$$
Here $(i)$ follows from the equality $(S_x\otimes S_{x^{-1}y})\Delta_{x,x^{-1}y}^\cop=\Delta_{x,x^{-1}y} S_y $ and $(ii)$ from the multiplicativity of the coproduct. Consequently,  $K_A(D',(\alpha',\beta'))$ is obtained from~$K_A(D,(\alpha,\beta))$ by replacing $\lambda_{u_1}\ant_{u_1} \otimes \lambda_{u_2}\ant_{u_2}$ with
$$
\psfrag{x}[Bl][Bl]{\scalebox{.8}{$x$}}
\psfrag{v}[Bl][Bl]{\scalebox{.8}{$y$}}
\psfrag{z}[Bl][Bl]{\scalebox{.8}{$x^{-1}y$}}
\psfrag{C}[Bc][Bc]{\scalebox{.9}{$\ant_{u_1}$}}
\psfrag{U}[Bc][Bc]{\scalebox{.9}{$\ant_{u_2}$}}
\rsdraw{.4}{.9}{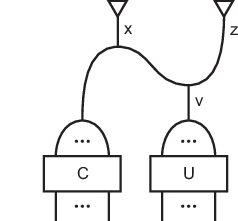}\;\;\overset{(i)}{=}\;\; \rsdraw{.4}{.9}{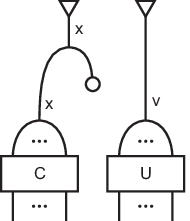} \;\,\overset{(ii)}{=}\; \lambda_{u_1}\ant_{u_1} \otimes \lambda_{u_2}\ant_{u_2}.
$$
Here $(i)$ follows from the fact that $\lambda$ is a two-sided $\chi$-integral and $(ii)$ from the unitality of the product.
Thus $K_A(D',(\alpha',\beta'))=K_A(D,(\alpha,\beta))$.\\ 

\pfpart{Invariance under handle slides  of lower circles} 
This move slides a lower circle $l_1$ over another lower circle $l_2$ and creates new lower circles: $l_1' = l_1 \#_b l_2$ and a copy~$l'_2$ of $l_2$. Let $(D',(\alpha',\beta'))$ be the resulting $\CM$-Heegaard diagram. Set $e=\beta(l_1)$ and $f=\beta(l_2)$, so that $\beta(l_1')=ef$ and $\beta(l_2')=f$. Denote  $\I_{l_2}=\{s_1<\dots<s_n\}$ and set $x_i=\alpha(s_i)$ for $1 \leq i \leq n$. 
It follows from the definitions and the associativity of the product that  $K_A(D',(\alpha',\beta'))$ is obtained from the formulation~ \eqref{eq-pf-def1} of~$K_A(D,(\alpha,\beta))$ by replacing $\ant_{l_1}(\Lambda_{l_1}) \otimes \ant_{l_2}(\Lambda_{l_2})$ with 
$$
\psfrag{x}[Br][Br]{\scalebox{.8}{$x_1$}}
\psfrag{z}[Bl][Bl]{\scalebox{.8}{$x_n$}}
\psfrag{A}[Bc][Bc]{\scalebox{.9}{$S_{x_1}^{\kappa_1}$}}
\psfrag{B}[Bc][Bc]{\scalebox{.9}{$S_{x_n}^{\kappa_n}$}}
\psfrag{C}[Bc][Bc]{\scalebox{.9}{$\ant_{l_1}$}}
\psfrag{m}[Bc][Bc]{\scalebox{.9}{$\mathrm{m}_1$}}
\psfrag{w}[Bc][Bc]{\scalebox{.9}{$\mathrm{m}_n$}}
\psfrag{e}[Bl][Bl]{\scalebox{.9}{$ef$}}
\psfrag{f}[Bl][Bl]{\scalebox{.9}{$f$}}
\rsdraw{.45}{.9}{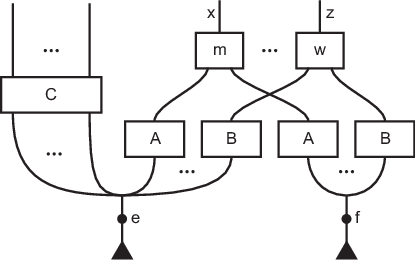} \;\,\;,
$$
where $\kappa_i=0$ and $\mathrm{m}_i=\mu_{x_i}$ if $s_i$ is positive, and  $\kappa_i=1$ and $\mathrm{m}_i=\mu_{x_i}^\opp=\mu_{x_i} \sigma_{A_{x_i},A_{x_i}}$ if $s_i$ is negative. Here, by convention,   $S_z^{0}=\id_{A_z}$ for all $z \in H$.
Now 
$$
\psfrag{E}[Br][Br]{\scalebox{.8}{$\CM(f)$}}
\psfrag{F}[Bl][Bl]{\scalebox{.8}{$\CM(f)$}}
\psfrag{x}[Br][Br]{\scalebox{.8}{$x_1$}}
\psfrag{z}[Bl][Bl]{\scalebox{.8}{$x_n$}}
\psfrag{A}[Bc][Bc]{\scalebox{.9}{$S_{x_1}^{\kappa_1}$}}
\psfrag{B}[Bc][Bc]{\scalebox{.9}{$S_{x_n}^{\kappa_n}$}}
\psfrag{m}[Bc][Bc]{\scalebox{.9}{$\mathrm{m}_1$}}
\psfrag{w}[Bc][Bc]{\scalebox{.9}{$\mathrm{m}_n$}}
\psfrag{U}[Bc][Bc]{\scalebox{.9}{$\ant_{l_2}$}}
\rsdraw{.45}{.9}{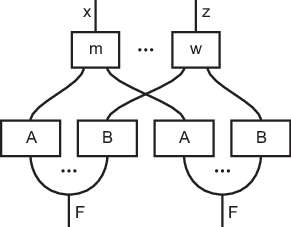} \;\;\,\overset{(i)}{=}\;\;
\rsdraw{.45}{.9}{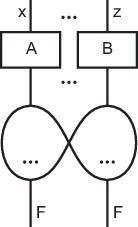} \;\;\overset{(ii)}{=}\;\;
\rsdraw{.45}{.9}{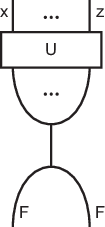} \;.
$$
Here $(i)$ follows from the equality $\mu_x^\opp(S_x\otimes S_x)=S_x\mu_{x^{-1}}$ and $(ii)$ from the multiplicativity of the coproduct. Also,
$$
\psfrag{x}[Bl][Bl]{\scalebox{.8}{$\CM(e)$}}
\psfrag{z}[Bl][Bl]{\scalebox{.8}{$\CM(f)$}}
\psfrag{e}[Bl][Bl]{\scalebox{.9}{$ef$}}
\psfrag{b}[Bl][Bl]{\scalebox{.9}{$e$}}
\psfrag{f}[Bl][Bl]{\scalebox{.9}{$f$}}
\psfrag{a}[Br][Br]{\scalebox{.9}{$e$}}
\rsdraw{.45}{.9}{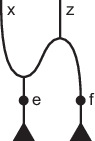} \;\;\overset{(i)}{=}\;\;\,
\rsdraw{.45}{.9}{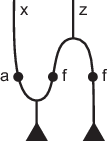} \;\;\,\overset{(ii)}{=}\;\;
\rsdraw{.45}{.9}{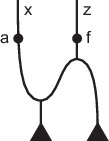} \;\overset{(iii)}{=}\;
\rsdraw{.45}{.9}{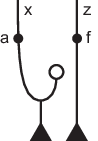} \;\overset{(iv)}{=}\;
\rsdraw{.45}{.9}{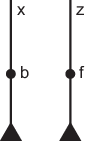} .
$$
Here $(i)$ and $(ii)$ follow from the axioms of a $\CM$-action, $(iii)$ from the fact that $\Lambda$ is a two-sided integral element  of $A_1$, and $(iv)$ from the counitality of the coproduct. Consequently,
$K_A(D',(\alpha',\beta'))$ is obtained from~$K_A(D,(\alpha,\beta))$ by replacing $\ant_{l_1}(\Lambda_{l_1}) \otimes \ant_{l_2}(\Lambda_{l_2})$ with
$$
\psfrag{x}[Bl][Bl]{\scalebox{.8}{$\CM(e)$}}
\psfrag{z}[Bl][Bl]{\scalebox{.8}{$\CM(f)$}}
\psfrag{e}[Bl][Bl]{\scalebox{.9}{$ef$}}
\psfrag{f}[Bl][Bl]{\scalebox{.9}{$f$}}
\psfrag{b}[Bl][Bl]{\scalebox{.9}{$e$}}
\psfrag{a}[Br][Br]{\scalebox{.9}{$e$}}
\psfrag{C}[Bc][Bc]{\scalebox{.9}{$\ant_{l_1}$}}
\psfrag{U}[Bc][Bc]{\scalebox{.9}{$\ant_{l_2}$}}
\rsdraw{.4}{.9}{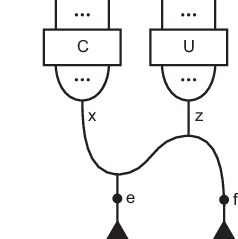} \;\, = \; 
\rsdraw{.4}{.9}{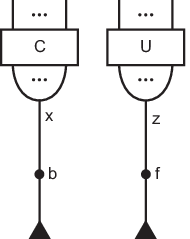} \; = \; 
\ant_{l_1}(\Lambda_{l_1}) \otimes \ant_{l_2}(\Lambda_{l_2}).
$$
Thus $K_A(D',(\alpha',\beta'))=K_A(D,(\alpha,\beta))$.\\

\pfpart{Invariance under adding trivial circles} 
Adding a trivial upper circle labeled by some $x \in H$ contributes a factor of $\lambda_x(1_x)=\dim_{\kk}(A_1)$ to  $\lambda_\up P_{\low,\up} (\Omega_\low)$. Adding a trivial lower circle labeled by $1 \in E$ contributes a  factor of $\varepsilon(\Lambda)=\dim_{\kk}(A_1)$ to  $\lambda_\up P_{\low,\up} (\Omega_\low)$. 
Since these contributions are  cancelled by the normalization term $\dim_\kk(A_1)^{g(\Sigma)-|\up|-|\low|}$, we deduce that $K_A(D,(\alpha,\beta))$  is invariant under this $\CM$-move.

\appendix

\section{Principal 2-bundles over topological spaces}\label{sect-2-bundles-appendix}

In this appendix, we review the basics of 2-bundles over topological spaces (in the spirit of~\cite{Wo}) and their relationship with \v{C}ech cohomology.

\subsection{Internal groupoids}\label{sect-internal-groupoids} 
A \emph{groupoid internal to} a category $\cc$ with pullbacks consists of an object $P_0$ of $\cc$ (the object of \emph{objects}), an object $P_1$ of $\cc$ (the object of \emph{morphisms}), and the following structure morphisms in $\cc$: the \emph{source} $s \co P_1 \to P_0$, the \emph{target} $t \co P_1 \to P_0$, the \emph{product} (or \emph{composition}\footnote{As for usual groupoids (see Appendix~\ref{sect-groupoids}),  our notation $\ast$ for the product of internal groupoids is opposite to the standard notation $\circ$ for composition: $f \ast g=g \circ f$ whenever $tf=sg$.}) $\ast \co P_1 \times_{P_0} P_1 \to P_1$, where $P_1 \times_{P_0} P_1$ is the pullback of the pair $(t,s)$, the \emph{unit} morphism $P_0 \to P_1$, and the \emph{inversion} $P_1 \to P_1$, satisfying the usual axioms of a groupoid. We denote such a groupoid by $P=(P_1 \rightrightarrows P_0)$ where the arrows stand for the source and the target morphisms. 

A \emph{functor} $f \co P\to Q$  between groupoids $P$ and $Q$ internal to $\cc$ is a pair $f=(f_0 \co P_0 \to Q_0, f_1 \co P_1 \to Q_1)$ of morphisms in $\cc$ satisfying the usual axioms of a functor (preservation of source, target, product, and units). Such a functor automatically preserves the inversion. For example, $\id_{P}=(\id_{P_0}, \id_{P_1})$ is a functor from $P$ to itself called the \emph{identity functor}. 

A \emph{natural transformation} $\alpha \co f \Rightarrow g$ between two functors $f,g \co P\to Q$ is a morphism $\alpha \co P_0 \to Q_1$   in $\cc$ satisfying the usual axioms of naturality. Note that since~$Q$ is a groupoid, any such natural transformation is in fact a natural isomorphism.
For example, given a  functor $f=(f_0,f_1) \co P\to Q$, the composition of $f_0$ with the unit morphism of  $Q$ is a natural transformation $f \Rightarrow f$ denoted  by $1_f$.

For a detailed discussion of the definitions above, we refer for example to~\cite{MF} and~\cite{Mi}.

\subsection{Topological 2-spaces}\label{sect-2-spaces}
A \textit{topological 2-space} is a groupoid 
internal to the category of topological spaces, that is, a groupoid  $P=(P_1 \rightrightarrows P_0)$ where the set~$P_0$ of objects and the set $P_1$ of morphisms are topological spaces and the groupoid structure morphisms are continuous maps. For example, any topological space $\B$ gives rise to the topological $2$-space  $\widebar{\B}=(\B \rightrightarrows \B)$ where both the source and the target maps are the identity. 

A \emph{continuous functor} between topological 2-spaces is a functor internal to the category of topological spaces, that is, a functor whose maps between objects and morphisms are continuous.

\subsection{2-groups}\label{sect-2-groups}
A (strict) \emph{2-group} is a groupoid internal to the category of groups. In a 2-group $\tg=(\tg_1 \rightrightarrows \tg_0)$, the product and the inversion  are computed from the group structures of $\tg_1$ and $\tg_0$ as follows: for all $g,h \in \tg_1$ with $t(g)=s(h)$,
$$
g \ast h=h 1_{t(g)^{-1}} g \quad \text{and} \quad \bar{g}=1_{s(g)}g^{-1}1_{t(g)},
$$
where $1_x$ denotes the unit associated with $x \in \tg_0$.

There is a bijective correspondence between strict 2-groups and crossed modules. In this correspondence, the 2-group associated with a crossed module $\CM\co E \to H$ (see Section~\ref{sect-crossed-modules-def}) is the groupoid
$$
\tg_\CM=( H \ltimes E   \rightrightarrows H )
$$
with source, target, and product given by
$$
s(x,e)=x, \quad t(x,e)=\CM(e)x, \quad (x,e) \ast (\CM(e)x,f)=(x,fe).
$$
Conversely, the crossed module associated with a 2-group $\tg=(\tg_1 \rightrightarrows \tg_0)$ is the restriction $t\co \te \to \tg_0$  of the target map of $\tg$ to the kernel $\te =\Ker(s)\subset \tg_1$ of the source map of $\tg$. Here the action of the group $\tg_0$ on $\te$ is given by  $\lact{x}{e}=1_x   e 1_{x^{-1}}$ for all $(x,e) \in \tg_0 \times \te$, where $1_x$ denotes the unit associated with~$x$.

\subsection{Topological 2-groups}\label{sect-top-2-groups}
A (strict) \emph{topological $2$-group} is a groupoid internal to the category of topological groups, that is, a 2-group where the sets of objects and morphisms are topological groups and the groupoid structure homomorphisms are continuous. 

A topological $2$-group is \emph{discrete} if the groups of objects and morphisms are discrete (as topological spaces).
Note that any 2-group $\tg=(\tg_1 \rightrightarrows \tg_0)$ induces a discrete topological $2$-group by endowing $\tg_1$ and $\tg_0$ with the discrete topology. In particular, a crossed module $\CM$ induces the discrete topological $2$-group $\tg_\CM$.

\subsection{Actions of topological 2-groups}\label{sect-action}
A \emph{continuous right action} of a topological 2-group $\tg$ on a topological 2-space~$P$ is a continuous functor $a \co P \times \tg \to P$ such that
$$
a(\id_P \times \mu)=a(a \times \id_\tg)  \quad  \text{and} \quad a(\id_P \times \eta)=1_P,
$$
where the functor $\mu\co \tg \times \tg \to \tg$ is given by the group products of $\tg_0$ and $\tg_1$ and the functor $\eta \co \widebar{\{1\}} \to \tg$ sends $1$ to the unit elements of $\tg_0$ and $\tg_1$. 

A \emph{$\tg$-2-space} is a topological 2-space endowed with a continuous right action of~$\tg$. For example, for any topological 2-space $P$, the product  $P \times \tg$ is a $\tg$-2-space where~$\tg$ acts by right multiplication on the second factor.

A \emph{$\tg$-functor} between two $\tg$-2-spaces is a continuous functor which is $\tg$-equi\-va\-riant in the sense that it (strictly) preserves the actions of $\tg$. 
A \emph{$\tg$-pseudo-inverse} of a $\tg$-functor $f \co P \to Q$ is a $\tg$-functor $g \co Q \to P$ such that there exist continuous natural transformations (actually isomorphisms by Section~\ref{sect-internal-groupoids}) $\varepsilon \co gf \Rightarrow \id_P$ and  $\eta\co \id_Q \Rightarrow fg$ which are $\tg_0$-equivariant in the sense that for all $p \in P_0$, $q \in Q_0$, and~$x \in \tg_0$,
$$
\varepsilon_{p \cdot x}= \varepsilon_{p}\cdot 1_x \quad \text{and} \quad \eta_{q \cdot x}= \eta_{q}\cdot 1_x.
$$ 

A \emph{$\tg$-equivalence} is a $\tg$-functor  between $\tg$-2-spaces which has a $\tg$-pseudo-inverse.
Note that if $f \co P \to Q$ is a $\tg$-equivalence with $\tg$-pseudo-inverse $g \co Q \to P$ and if $\varepsilon \co gf \Rightarrow \id_P$  is a $\tg_0$-equivariant continuous natural isomorphism, then there exists a unique $\tg_0$\ti equivariant continuous natural isomorphism  $\eta\co \id_Q \Rightarrow fg$  making $(g, f, \eta, \varepsilon)$ an adjoint equivalence.

\subsection{Principal 2-bundles over topological spaces}\label{sect-principal-2-bundles}
Let $\tg$ be a topological $2$\ti group and $\B$ be a topological space. 

A \emph{trivializing chart} for a continuous functor $\pi \colon P \to \widebar{\B}$, where $P$ is a $\tg$-2-space and $\widebar{\B}$ is defined in Section~\ref{sect-2-spaces}, is a quadruple $(U,\psi, \phi,\varepsilon)$, where $U$ is an open subset of $\B$, $\psi \co P|_{U} \to \widebar{U} \times \tg$ is a $\tg$-equivalence, $\phi \co  \widebar{U} \times \tg\to P|_{U}$ is a $\tg$-pseudo-inverse of $\psi$, and 
$\varepsilon \co \phi \psi \Rightarrow  \id_{P|_{U}}$ is a  $\tg_0$-equivariant continuous natural isomorphism
such that the diagram 
$$
\xymatrix@R=.7cm @C=.9cm{
P|_{U} \ar@{->}[rd]_-{\pi_{|U}} \ar@{->}[r]^-{\psi} & {\widebar{U}} \times \tg \ar@{->}[r]^-{\phi} \ar@{->}[d]^-{\mathrm{pr}_1} & P|_{U} \ar@{->}[ld]^-{\pi_{|U}}\\
&  {\widebar{U}} &
}
$$
commutes. Here, $P|_{U}$ is the full subgroupoid of $P$ on objects mapped to $U$ under~$\pi$, $\pi_{|U}$ is the restriction of $\pi$, and $\mathrm{pr}_1$ is the projection onto the first factor. Recall from Section~\ref{sect-action} that this data uniquely determines a $\tg_0$-equivariant continuous natural isomorphism  $\eta\co \id_{\widebar{U} \times \tg} \Rightarrow  \psi \phi $  making $(\phi, \psi, \eta, \varepsilon)$ an adjoint equivalence. 

An \emph{atlas} for $\pi$ is a family $\aaa=\{(U_i,\psi_i, \phi_i,\varepsilon_i)\}_{i \in I}$ of trivializing charts such that $\{U_i\}_{i \in I}$ covers $\B$.  A \emph{trivializing cover} for $\pi$ is an open cover of $\B$ which extends to an atlas for~$\pi$.

A \emph{principal $\tg$-bundle} over $\B$ is a continuous functor $\pi \colon P \to \widebar{\B}$, where $P$ is a $\tg$-2-space, which admits an atlas. For example, the first projection $\widebar{\B} \times \tg \to \widebar{\B}$ is a principal $\tg$-bundle over $\B$, called the \emph{trivial $\tg$-bundle over $X$}.

A \emph{$\tg$-bundle morphism} from a principal $\tg$-bundle   $\pi \colon P \to \widebar{\B}$ to a principal $\tg$\ti bundle  $\pi' \colon P' \to \widebar{\B}$ is a $\tg$-functor $f \co P \to P'$  making the  diagram
$$
\xymatrix@R=.8cm @C=.5cm{
P \ar@{->}[rr]^-{f}  \ar@{->}[rd]_-{\pi} & &  P'\ar@{->}[ld]^-{\!\pi'} \\
&  {\widebar{\B}} &
}
$$
commutative. 

Two principal $\tg$-bundles $\pi$ and $\pi'$ over $\B$ are (Morita) \emph{equivalent} if there is a span between them in the category of principal $\tg$-bundles over $\B$, meaning that there exist a principal $\tg$-bundle  $\widetilde{\pi}$ over $\B$ and $\tg$-bundle morphisms $\widetilde{\pi} \to \pi$ and $\widetilde{\pi} \to \pi'$. In particular, if there is a $\tg$-bundle morphism between two principal $\tg$-bundles over~$\B$, then they are equivalent.

A principal $\tg$-bundle over $\B$ is \emph{trivializable} if it is equivalent to the  trivial $\tg$\ti bundle over $\B$.

\subsection{\v{C}ech cohomology}\label{sect-cocyle-descript}
Let  $\tg=(\tg_1 \rightrightarrows \tg_0)$ be a topological 2-group. We endow the kernel $\te \subset \tg_1$  of the source map of $\tg$ with the induced topology. Note that the crossed module $t\co \te \to \tg_0$ (given by the restriction of the target map, see Section~\ref{sect-2-groups}) is continuous. 

Let $\B$ be a topological space and $\uu=\{U_i\}_{i \in I}$ be an open cover of $\B$. For finitely many indices $i_1, \dots,i_n \in I$, denote 
$$
U_{i_1 \cdots i_n}=U_{i_1} \cap  \cdots \cap U_{i_n}.
$$
A \emph{$\tg$-valued \v{C}ech cocycle} subordinate to $\uu$ is a pair 
$$
\Bigl(\x=(\x_{ij} \co U_{ij} \to \tg_0)_{i,j \in I}\, , \, \e=(\e_{ijk}\co U_{ijk} \to \te)_{i,j,k \in I} \Bigr) 
$$
of families of continuous maps such that for all $i,j,k,l \in I$,
$$
t(\e_{ijk})\x_{ij}\x_{jk}=\x_{ik} \, \text{ on $U_{ijk}$} \quad  \text{and} \quad \e_{ikl}\e_{ijk}=\e_{ijl}\bigl(\lact{\x_{ij}}{\!\e_{jkl}}\bigr) \, \text{ on $U_{ijkl}$.}
$$
Two such cocycles $(\x,\e)$ and $(\x',\e')$ are \emph{cohomologous} if there is a pair 
$$
\Bigl(a=(a_i \co U_i \to \tg_0)_{i \in I}\, , \, d=(d_{ij}\co U_{ij} \to \te)_{i,j \in I} \Bigr) 
$$
of families of continuous maps such that\footnote{Given a map $\alpha \co A \to G$ from a set $A$ to a group $G$, we denote by $\alpha^{-1} \co A \to G$ the map sending any $a \in A$ to $\alpha(a)^{-1} \in G$. } for all $i,j,k \in I$,
$$
\x_{ij}'=a_i t(d_{ij})\x_{ij}a_j^{-1}  \, \text{ on $U_{ij}$} \quad \text{and} \quad \e_{ijk}'= \lact{a_i}{\ns\bigl(}d_{ik}\e_{ijk}\lact{\x_{ij}}{\!(}d_{jk}^{-1})d_{ij}^{-1}  \bigr)  \, \text{ on $U_{ijk}$.}
$$
Being cohomologous is an equivalence relation on the set of $\tg$-valued \v{C}ech cocycles  subordinate to $\uu$. Denote the set of equivalence classes by $\check{H}(\uu,\tg)$. Any refinement~$\vv$ of $\uu$ gives rise to a map $\check{H}(\uu,\tg) \to \check{H}(\vv,\tg)$ induced by restriction. The  \emph{\v{C}ech cohomology} of $\B$ with coefficients in $\tg$ is the colimit
$$
\check{H}(\B,\tg) = \varinjlim_{\uu} \check{H}(\uu,\tg)
$$
taken over the open covers of $\B$. It comes with universal cocone maps 
$$
\check{H}(\uu,\tg) \to \check{H}(\B,\tg)
$$
which are compatible  with the restriction maps. Note that if $\uu$ is a good open cover of $\B$ (meaning that all nonempty finite intersections of open sets in $\uu$ are contractible), then the cocone map is a bijection $\check{H}(\uu,\tg) \cong \check{H}(\B,\tg)$. 

Recall that a topological space admits good covers if any open cover has a good open cover that refines it.
Recall also that a topological group $G$ is well-pointed if the inclusion $\{1\} \hookrightarrow G$ is a closed cofibration. The  topological   2-group $\tg$ is said to be \emph{well-pointed} if both~$\tg_0$ and $\te$ are well-pointed. Baez and Stevenson~\cite{BS} proved that if $\tg$ is well-pointed and $\B$ is a paracompact Hausdorff space admitting good covers, then  there is a canonical bijection
$$
\check{H}(\B,\tg) \cong [\B, B|\tg|]
$$
between the \v{C}ech cohomology of $\B$ with coefficients in $\tg$ and the set of homotopy classes of maps $\B \to B|\tg|$. Here $|\tg|$ is the topological group defined as the geometric realization of the nerve of $\tg$ and $B|\tg|$ is its classifying space.

\subsection{From \v{C}ech cocycles to 2-bundles}
Let $\tg=(\tg_1 \rightrightarrows \tg_0)$ be a topological 2-group and $\uu=\{U_i\}_{i \in I}$ be an open cover of a topological space $\B$. We associate to  any  $\tg$-valued \v{C}ech cocycle $c=(\x,\e)$  subordinate to $\uu$ a principal $\tg$-bundle $$\pi_c \co P_c=(P_1 \rightrightarrows  P_0)\to \widebar{\B}$$ defined as follows.
Consider the topological spaces
$$
P_0 = \coprod_{i \in I} U_i \times \tg_0 \quad \text{and} \quad P_1 =\coprod_{i,j \in I} U_{ij} \times \tg_1
$$
where $U_{ij}=U_i \cap U_j$ as in Section~\ref{sect-cocyle-descript}. An object  $(u,x) \in U_i \times \tg_0$ is denoted by~$(u,x)_i$, and a morphism $(v,g) \in U_{ij} \times \tg_1$ is denoted by $(v,g)_{ij}$. The source and target maps $s,t \co P_1 \to P_0$ are defined by 
$$ 
s\bigl((v,g)_{ij}\bigr)= \bigl(v,s(g)\bigr)_{\ns i} \quad \text{and} \quad t\bigl((v,g)_{ij}\bigr)=\bigl(v,\x_{ij}^{-1}(v)t(g)\bigr)_{\ns j}.
$$ 
The product of morphisms is defined by
$$
(v,g)_{ij} \ast (v,h)_{jk}=(v,\ell)_{ik} \quad \text{with} \quad  \ell=\e_{ijk}(v) \bigl(g \ast (1_{\x_{ij}(v)} h)\bigr)  \in \tg_1
$$
for all $v \in U_{ijk}$ and $g,h \in \tg_1$ such that $t(g)=\x_{ij}(v)s(h)$. The units and the inverses are given by
$$
1_{(u,x)_i}=\bigl(u,\e_{iii}^{-1}(u)1_{x}\bigr)_{\ns ii} \quad \text{and} \quad  \widebar{(v,g)}_{ij}=\bigl(v,1_{\x_{ij}^{-1}(v)}\e_{iji}^{-1}(v)\e_{iii}^{-1}(v) \widebar{g}\bigr)_{\ns ji}.
$$
Using the expression of the product and inversion  of $ \tg$ by means of those of the groups~$\tg_0$ and $\tg_1$ (see Section~\ref{sect-2-groups}), we have:
$$
g \ast (1_{\x_{ij}(v)} h)=1_{\x_{ij}(v)} h 1_{t(g)^{-1}}g \quad \text{and} \quad 
\widebar{g}=1_{s(g)} g^{-1} 1_{t(g)} .
$$
The right action of $\tg$ on $P_c$ is defined by
$$
(u,x)_i \cdot y=(u,xy)_i  \quad \text{and} \quad  (v,g)_{ij} \cdot h=(v,gh)_{ij}. 
$$
Finally, the functor $\pi_c \co P_c\to \widebar{\B}$ is defined by 
$$
\pi_c\bigl((u,x)_i\bigr)=u  \quad \text{and} \quad  \pi_c\bigl((v,g)_{ij}\bigr)=v. 
$$

\begin{lem}\label{lem-cocycle-to-principal}
$\pi_c$ is a principal $\tg$-bundle over $\B$ admitting $\uu$ as trivializing cover.
\end{lem}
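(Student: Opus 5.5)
The argument splits into three parts. First, $P_c$ is a topological 2-space. Second, $\tg$ acts continuously on it and $\pi_c$ is a continuous functor. Third, each $U_i$ carries an explicit trivializing chart. I carry these out in that order.

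\emph{Groupoid structure.} Continuity of all structure maps is immediate, since on each piece $U_{ij}\times\tg_1$ they are built from the continuous cocycle maps and the continuous operations of $\tg$.
\begin{itemize}
\item \emph{Composition is well defined.} If $t((v,g)_{ij})=s((v,h)_{jk})$, then $t(g)=\x_{ij}(v)s(h)$, so $g\ast(1_{\x_{ij}(v)}h)$ makes sense. Its source is $s(g)$, and its target is $\x_{ij}t(h)$.
\item \emph{Source and target of the composite.} Multiplying on the left by $\e_{ijk}(v)\in\te$ leaves the source unchanged and multiplies the target by $t(\e_{ijk})$. Hence $t(\ell)=t(\e_{ijk})\x_{ij}t(h)$. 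The first cocycle identity gives $\x_{ik}^{-1}t(\e_{ijk})\x_{ij}=\x_{jk}^{-1}$, so the target of $(v,\ell)_{ik}$ is $(v,\x_{jk}^{-1}t(h))_k$, as required.
\item \emph{Associativity.} I expand both bracketings using $g\ast h=h1_{t(g)^{-1}}g$ and the Peiffer identity (in the form $1_x e1_{x^{-1}}=\lact{x}{e}$, and $\te$ commuting with $\Ker t$ appropriately). The two sides then differ exactly by the factor comparing $\e_{ikl}\e_{ijk}$ with $\e_{ijl}\lact{\x_{ij}}{\e_{jkl}}$, which the second cocycle identity kills.
\item \emph{Units and inverses.} The same method, specialised to repeated indices $i=j=k$ and $(i,j,i)$, verifies the unit and inverse axioms for the formulas given.
\end{itemize}
This bookkeeping in the internal group structure of $\tg_1$ is the main obstacle. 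I plan to reduce it by first proving that the cocycle identities force the normalisations relating $\e_{iii}$, $\e_{iij}$, $\e_{ijj}$ and $\e_{iji}$. These follow by plugging repeated indices into the second cocycle identity, and they make the unit and inverse checks short.

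\emph{Action and projection.} Right multiplication by $(y,h)$ commutes with source, target and composition. The point is that the cocycle data multiply on the left, while $\ast$ is computed from group products in $\tg_1$, and these are compatible with right multiplication by the interchange law of the 2-group. The two action axioms hold trivially. The projection $\pi_c$ is clearly a continuous functor to $\widebar{\B}$.

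\emph{Trivializing chart over $U_i$.} I define
$$\phi_i\co\widebar{U_i}\times\tg\to P_c|_{U_i},\qquad (u,x)\mapsto(u,x)_i,\qquad (u,g)\mapsto(u,\e_{iii}^{-1}(u)g)_{ii}.$$
In the other direction, I define $\psi_i$ on objects by $(u,x)_j\mapsto(u,\x_{ij}(u)x)$, with the analogous formula on morphisms; this uses the transition data and the $\e$'s. Both functors are $\tg$-equivariant because everything acts by left translation. The composite $\psi_i\phi_i$ is the identity on objects, and $\phi_i\psi_i$ sends $(u,x)_j$ to $(u,\x_{ij}x)_i$. I take $\varepsilon_i$ to be the morphism $(u,1_{\x_{ij}(u)x})_{ij}$, corrected by the relevant $\e$-factor. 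Naturality of $\varepsilon_i$ is again an instance of the second cocycle identity, and its $\tg_0$-equivariance is immediate from the right action. The triangle over $\widebar{U_i}$ commutes by construction. Therefore $\{(U_i,\psi_i,\phi_i,\varepsilon_i)\}$ is an atlas, and $\uu$ is a trivializing cover for $\pi_c$.
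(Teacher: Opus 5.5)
Your route is the same as the paper's. The paper also asserts that the cocycle identities make $P_c$ a $\tg$-2-space and $\pi_c$ a continuous functor, and then writes down explicit charts with the same $\phi_i$ and the same $\psi_i$ on objects. Your composition and associativity checks are correct.

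The chart step, however, rests on a false claim that hides missing data. You assert that $\psi_i\phi_i$ is the identity on objects, and you never construct a natural isomorphism $\eta_i\colon \id_{\widebar{U}_i\times\tg}\Rightarrow\psi_i\phi_i$. The definition of a $\tg$-pseudo-inverse requires this $\eta_i$ in addition to $\varepsilon_i$.

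The cocycles here are not normalized. The first cocycle identity with $i=j=k$ gives only $\x_{ii}=t(\e_{iii})^{-1}$. The repeated-index consequences of the second identity, such as $\e_{iik}=\e_{iii}$ and $\e_{ijj}=\lact{\x_{ij}}{\e_{jjj}}$, relate the degenerate components but never force $\e_{iii}=1$. For example, for a one-set cover, $\e_{111}=e$ and $\x_{11}=t(e)^{-1}$ is a cocycle for every $e\in\te$. This is exactly why the units of $P_c$ are $(u,\e_{iii}^{-1}1_x)_{ii}$. Consequently $\psi_i\phi_i(u,x)=(u,\x_{ii}(u)x)$. On morphisms, $\psi_i\phi_i$ sends $(u,a1_x)$ with $a\in\te$ to $(u,\e_{iii}^{-1}a\,\e_{iii}1_{\x_{ii}(u)x})$. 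So it is the identity neither on objects nor on morphisms.

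The repair is the paper's choice $\eta_i(u,x)=(u,\e_{iii}^{-1}(u)1_x)$. This is a morphism from $(u,x)$ to $(u,\x_{ii}(u)x)$. It is continuous and $\tg_0$-equivariant, and its naturality is a one-line computation with $g\ast h=h1_{t(g)^{-1}}g$.

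Two smaller points should be pinned down rather than left vague:
\begin{itemize}
\item The morphism part of $\psi_i$ is $\psi_i((w,g)_{jk})=(w,\e_{ijk}(w)1_{\x_{ij}(w)}g)$. Its functoriality is the same Peiffer-plus-second-cocycle computation as your associativity check.
\item No $\e$-correction of $\varepsilon_i$ is needed. The plain $\varepsilon_i((v,x)_j)=(v,1_{\x_{ij}(v)x})_{ij}$ is already natural, since its naturality square reduces exactly to $\e_{iik}=\e_{iii}$.
\end{itemize}
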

\begin{proof}
The axioms of a \v{C}ech cocycle imply that $P_c$ is a $\tg$-2-space and $\pi_c \co P_c\to \widebar{\B}$ is a continuous functor. Let us prove that $\uu=\{U_i\}_{i \in I}$ is a trivializing cover for~$\pi_c$. Pick $i \in I$. Consider  $\psi_i \co P_c|_{U_i} \to \widebar{U}_i \times \tg$ and $\phi_i \co  \widebar{U}_i \times \tg\to P_c|_{U_i}$ defined by
\begin{align*}
& \psi_i\bigl ((v,x)_j \bigr )=\bigl(v,\x_{ij}(v)x\bigr), && \phi_i(u,x)=(u,x)_i, \\
& \psi_i\bigl ((w,g)_{jk} \bigr )=\bigl(w,\e_{ijk}(w)1_{\x_{ij}(w)}g\bigr), && \phi_i(u,g)=\bigl(u,\e_{iii}^{-1}(u)g \bigr)_{\ns ii}, 
\end{align*}
for all $j,k,l\in I$,  $u \in U_i$, $v \in U_{ij}$, $w \in U_{ijk}$, $x \in \tg_0$, $g \in \tg_1$. 
Then $\psi_i$ and~$\phi_i$ are $\tg$\ti functors and are $\tg$-pseudo-inverses of each other via the $\tg_0$-equivariant continuous natural isomorphisms 
$\epsilon_i \co \phi_i \psi_i\Rightarrow\id_{P_c|_{U_i}}$ and $\eta_i \co \id_{\widebar{U}_i  \times \tg} \Rightarrow \psi_i \phi_i$ given by
$$
\epsilon_i\bigl((v,x)_j\bigr)=\bigl(v,1_{\x_{ij}(v)x} \bigr)_{\ns ij}  \quad \text{and} \quad  \eta_i(u,x)=\bigl(u,\e_{iii}^{-1}(u)1_x\bigr)
$$
for all $j\in I$,  $u \in U_i$, $v \in U_{ij}$, $x \in \tg_0$. Moreover, $\mathrm{pr}_1  \circ \psi_i=\pi_{c|U_i}$ and $\pi_{c|U_i} \circ \phi_i=\mathrm{pr}_1$, where $\mathrm{pr}_1$ is the projection onto the first factor. Consequently $(U_i,\psi_i,\phi_i,\varepsilon_i)$ is a trivializing chart for $\pi_c$.
\end{proof}

\begin{lem}\label{lem-morphisms-Pc}
Let $c,c'$ be $\tg$-valued \v{C}ech cocycles subordinate to $\uu$. Then $c$ and $c'$ are cohomologous if and only if there is a $\tg$-bundle morphism between $\pi_c$ and $\pi_{c'}$.
\end{lem}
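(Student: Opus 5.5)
We prove the two implications separately, working entirely with the explicit description of $P_c=(P_1\rightrightarrows P_0)$ and of the charts $(\psi_i,\phi_i)$ from the proof of Lemma~\ref{lem-cocycle-to-principal}. Since morphisms of principal $\tg$-bundles are not required to be invertible, we show: if $c'$ is cohomologous to $c$ via $(a,d)$, then there is a $\tg$-bundle morphism $\pi_c\to\pi_{c'}$; and conversely, any $\tg$-bundle morphism $f\co\pi_c\to\pi_{c'}$ (in either direction) produces a pair $(a,d)$ relating $c$ and $c'$. Being cohomologous is an equivalence relation, so the direction of $f$ is immaterial.

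\emph{From a coboundary to a morphism.} Given $(a,d)$ with $\x'_{ij}=a_i t(d_{ij})\x_{ij}a_j^{-1}$ and the stated formula for $\e'$, define $f=(f_0,f_1)\co P_c\to P_{c'}$ by
$$
f_0\bigl((u,x)_i\bigr)=\bigl(u,a_i(u)x\bigr)_i, \qquad f_1\bigl((v,g)_{ij}\bigr)=\bigl(v,\, 1_{a_i(v)}\,\lact{a_i(v)}{}\bigl(d_{ij}(v)\bigr)\, \cdots\bigr)_{ij},
$$
where the exact form of $f_1$ is dictated by the target condition. Namely, we need $t(f_1(v,g)_{ij})=(v,\x'^{-1}_{ij}a_i t(g))_j$ to equal $f_0$ of $(v,\x_{ij}^{-1}t(g))_j$, which is $(v,a_j\x_{ij}^{-1}t(g))_j$. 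Using $\x'^{-1}_{ij}a_i=a_j\x_{ij}^{-1}t(d_{ij})^{-1}a_i^{-1}a_i$, this forces $f_1$ to multiply $1_{a_i}g$ on the left by the element of $\te$ corresponding to $d_{ij}^{-1}$, conjugated by $a_i$. So we set $f_1((v,g)_{ij})=(v,\, \lact{a_i}{}(d_{ij}^{-1})\,1_{a_i}g)_{ij}$, up to fixing the side conventions. Next we check that $f$ commutes with $s$, $t$, units and products. Compatibility with the product is exactly the second coboundary identity relating $\e'_{ijk}$ to $\e_{ijk}$, and units use the case $i=j=k$. Right $\tg$-equivariance is immediate because $f$ acts by left multiplication, and $\pi_{c'}f=\pi_c$ by construction.

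\emph{From a morphism to a coboundary.} Let $f\co P_c\to P_{c'}$ be a $\tg$-bundle morphism. Since $f_0$ commutes with the projection and is $\tg_0$-equivariant, we have $f_0((u,1)_i)=(u,b_i(u))_{k(u)}$ for some chart index $k$. To stay in the same chart we compose with the canonical morphisms $\epsilon'$ of $P_{c'}$ from Lemma~\ref{lem-cocycle-to-principal}, which connect $(u,y)_k$ to $(u,\x'_{ik}(u)y)_i$. Concretely, we define
$$
a_i(u)=\text{second coordinate of }\psi'_i\bigl(f_0((u,1)_i)\bigr)\in\tg_0,
$$
so that $f_0((u,x)_i)\cong(u,a_i(u)x)_i$ by equivariance. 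Then $d_{ij}(v)$ is extracted from $\psi'_i\bigl(f_1((v,1_1)_{ij})\bigr)\in\tg_1$ after right-translating by $1_{a_i}^{-1}$, so that it lies in $\te=\Ker(s)$. Continuity of $a_i$ and $d_{ij}$ follows from the continuity of $f$, $\psi'_i$ and the group operations. The first coboundary identity then follows by comparing the targets of $f_1((v,1)_{ij})$, and the second by applying $f_1$ to the product $(v,1)_{ij}\ast(v,1)_{jk}$, which involves $\e_{ijk}$, and comparing with the product in $P_{c'}$, which involves $\e'_{ijk}$.

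The main obstacle is purely bookkeeping. The first is choosing the correct left/right placement of $1_{a_i}$, of $\lact{a_i}{}$, and of the $\e_{iii}$ correction terms so that the product compatibility reduces exactly to the stated formula for $\e'$. The second is handling the case where $f_0$ lands in a different chart index, which is resolved by routing everything through $\psi'_i$. I would first fix the conventions by verifying the normalized case $\e_{iii}=1$, and then insert the $\e_{iii}$ factors as they appear in the unit formula.
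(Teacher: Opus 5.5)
\textbf{Verdict.} Your plan matches the paper's: build an explicit functor from $(a,d)$, and conversely read $(a,d)$ off a morphism using equivariance. However, the one formula you commit to in the forward direction is wrong, and the converse extracts the wrong $d_{ij}$.

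\textbf{Forward direction.} Take $f_1((v,g)_{ij})=\bigl(v,\lact{a_i}{(d_{ij}^{-1})}\,1_{a_i}g\bigr)_{ij}$ as you propose. Its target in $P_{c'}$ is $\bigl(v,\x'^{-1}_{ij}a_i\,t(d_{ij})^{-1}t(g)\bigr)_j=\bigl(v,a_j\x_{ij}^{-1}t(d_{ij})^{-2}t(g)\bigr)_j$. This is not $f_0$ of the target, which is $\bigl(v,a_j\x_{ij}^{-1}t(g)\bigr)_j$. The factor $t(d_{ij})^{-1}$ in $\x'^{-1}_{ij}a_i$ must be cancelled, not doubled.

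The correct choice is $f_1((v,g)_{ij})=(v,1_{a_i}d_{ij}g)_{ij}=\bigl(v,\lact{a_i}{d_{ij}}\,1_{a_i}g\bigr)_{ij}$, which is the paper's formula. With it, compatibility with products does reduce to the second identity, but you need to compute this, not assert it. Write elements of $\tg_1$ as $e\,1_x$ with $e\in\te$. The product of $P_c$ then reads $(v,e\,1_x)_{ij}\ast(v,e'1_y)_{jk}=\bigl(v,\e_{ijk}\,\lact{\x_{ij}}{e'}\,e\,1_x\bigr)_{ik}$. Using $\x'_{ij}a_j=a_i\,t(d_{ij})\x_{ij}$ and the Peiffer identity, $f(m\ast m')=f(m)\ast f(m')$ becomes $\lact{a_i}{(d_{ik}\e_{ijk})}=\e'_{ijk}\,\lact{a_i}{\bigl(d_{ij}\,\lact{\x_{ij}}{d_{jk}}\bigr)}$. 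This is exactly the stated formula for $\e'_{ijk}$. Units then come for free, since a composition-preserving map of groupoids preserves identities. With $d_{ij}^{-1}$ in place of $d_{ij}$, neither check goes through.

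\textbf{Converse.} Routing through $\psi'_i$ is a good idea. Unlike the paper's reindexing map $\alpha\co I\to I$, it does not require $f_0$ to send all of $U_i\times\tg_0$ into one chart. But your extraction of $d_{ij}$ is off.

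Write $\psi'_i\bigl(f_1((v,1)_{ij})\bigr)=(v,D\,1_{a_i})$ with $D\in\te$, and let $k$ be the chart containing $f_0\bigl((v,\x_{ij}^{-1})_j\bigr)$. The target is read in chart $i$, while $a_j$ was defined through $\psi'_j$. So the relation $\x'_{ik}=t(\e'_{ijk})\x'_{ij}\x'_{jk}$ gives $t(D)\,a_i=t(\e'_{ijk})\,\x'_{ij}a_j\x_{ij}^{-1}$. Hence the first identity holds for $d_{ij}=\lact{a_i^{-1}}{(\e'^{-1}_{ijk}D)}$, not for $D$.

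The cleanest way to see that this $d_{ij}$ also satisfies the second identity is to replace $f$ first. For $m$ going from chart $i$ to chart $j$, set $\tilde f(m)=\epsilon'_i\bigl(f(s(m))\bigr)\ast f(m)\ast\overline{\epsilon'_j\bigl(f(t(m))\bigr)}$, where $\epsilon'_i$ is the natural isomorphism of Lemma~\ref{lem-cocycle-to-principal} for $c'$.
\begin{itemize}
\item The inner factors cancel in composites, so $\tilde f$ is a functor.
\item It is continuous and $\tg$-equivariant, because the $\epsilon'_i$ are continuous and $\tg_0$-equivariant and the action is functorial.
\item It preserves chart indices, with $\tilde f((u,x)_i)=(u,a_ix)_i$.
\end{itemize}
Equivariance then forces $\tilde f((v,g)_{ij})=(v,1_{a_i}d_{ij}g)_{ij}$. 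A short computation with the cocycle identities $\e'_{iik}=\e'_{iii}$, $\e'_{ijj}=\lact{\x'_{ij}}{\e'_{jjj}}$ and $\e'_{ikj}\e'_{ijk}=\e'_{ijj}\,\lact{\x'_{ij}}{\e'_{jkj}}$ shows this is exactly the $d_{ij}$ above. The two computations of the forward direction, read backwards, then give both coboundary identities.
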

\begin{proof}
Denote  $c=(\x, \e)$ and $c'=(\x',\e')$.  Assume first that $c$ and $c'$  are cohomo\-logous via  some pair $(a,d)$. For all $(u,x)_i \in (P_c)_0$ and $(v,g)_{ij} \in (P_c)_1$, set
$$
f \bigl( (u,x)_i \bigr) = \bigl(u, a_i(u)x\bigr)_{\ns i} \quad \text{and} \quad f \bigl(  (v,g)_{ij}  \bigr) = \bigl(v, 1_{a_{i}(v)} d_{ij}(v) g\bigr)_{\ns ij}.
$$
Then this defines a functor $f \co P_c \to P_{c'}$ which is a $\tg$-bundle morphism $\pi_c \to \pi_{c'}$.

Conversely, assume that there is a $\tg$-bundle morphism $f\co \pi_c \to \pi_{c'}$. The condition $\pi_{c} = \pi_{c'} \circ f$ implies that there exists a map $\alpha \co I \to I$ such that 
$$
U_{i} \subset U_{\alpha(i)}, \quad f \bigl( (u,x)_i \bigr) \in U_{\alpha(i)} \times \tg_0, \quad \text{and} \quad f \bigl( (v,g)_{ij} \bigr)  \in U_{\alpha(i)\alpha(j)} \times \tg_1
$$  
for all  $i,j \in I$, $(u,x)_i \in (P_c)_0$, and $(v,g)_{ij} \in (P_c)_1$.  The continuity and $\tg$\ti equiva\-rian\-ce of $f$ imply that  there are continuous  maps $b_i \co U_i \to \tg_0$ and $h_{ij} \co U_{ij} \to \tg_1$ such that
$$
f \bigl( (u,x)_i \bigr) = \bigl(u,b_i(u)x\bigr)_{\ns \alpha(i)} \quad \text{and} \quad
f \bigl( (v,g)_{ij} \bigr) = \bigl(v, h_{ij}(v)g\bigr)_{\ns \alpha(i)\alpha(j)}. 
$$
Finally consider the continuous maps $a_i \co U_i \to \tg_0$ and $d_{ij} \co U_{ij} \to \te$ defined by
$$
a_i (u)= \bigl(\x_{\alpha(i)i}'(u)\bigr)^{-1}b_i(u), \quad
d_{ij}(v) = 1_{b_i(v)^{-1}} \bigl(\e_{\alpha(i)ij}'(v)\bigr)^{-1} \e_{\alpha(i)\alpha(j)j}(v) h_{ij}(v).
$$
Then, for all $i,j,k \in I$, 
$$
\x_{ij}'= a_i t(d_{ij}) \x_{ij} a_{j}^{-1} \, \text{ on $U_{ij}$,} \quad   \e_{ijk}' = \lact{a_i}{\!\Bigl(} d_{ik} \e_{ijk} \lact{\x_{ij}}{\!\bigl(}d_{jk}^{-1}\bigr) d_{ij}^{-1} \Bigr) \, \text{ on $U_{ijk}$},
$$
and so $c$ and $c'$ are cohomologous via the pair $(a,d)$.
\end{proof}

\begin{lem}\label{lem-restriction-cocycle-principal}
Let $c$ be a $\tg$-valued \v{C}ech cocycle subordinate to $\uu$ and let $\vv$ be a refinement of $\uu$. 
Denote by $c_{|\vv}$ the $\tg$-valued \v{C}ech cocycle subordinate to~$\vv$ induced by restriction of $c$. Then there is a $\tg$-bundle morphism $\pi_{c_{|\vv}} \to \pi_c$.
\end{lem}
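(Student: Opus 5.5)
The plan is to write down an explicit functor $f\co P_{c_{|\vv}} \to P_c$ and check by hand that it is a $\tg$-bundle morphism. Write $\vv=\{V_k\}_{k\in K}$ and fix a refinement map $\rho\co K \to I$ with $V_k\subset U_{\rho(k)}$. The restricted cocycle is $c_{|\vv}=(\x^\vv,\e^\vv)$ with $\x^\vv_{kl}=\x_{\rho(k)\rho(l)}|_{V_{kl}}$ and $\e^\vv_{klm}=\e_{\rho(k)\rho(l)\rho(m)}|_{V_{klm}}$. The first guess is
$$
f\bigl((u,x)_k\bigr)=(u,x)_{\rho(k)} \quad \text{and} \quad f\bigl((v,g)_{kl}\bigr)=(v,g)_{\rho(k)\rho(l)},
$$
which is well defined because $V_k\subset U_{\rho(k)}$ and $V_{kl}\subset U_{\rho(k)\rho(l)}$.

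The verifications, in order, are as follows.

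\emph{Continuity.} Continuity of $f_0$ and $f_1$ is immediate, since on each summand the map is an inclusion of open sets times the identity.

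\emph{Source and target.} Compatibility with source and target follows directly from the formulas for $s$ and $t$ in $P_c$, because the $\x$-values match.

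\emph{Products.} Compatibility with products holds because the product $(v,g)_{kl}\ast(v,h)_{lm}$ in $P_{c_{|\vv}}$ is given by $\e^\vv_{klm}(v)\bigl(g\ast(1_{\x^\vv_{kl}(v)}h)\bigr)$. This is exactly the formula for $(v,g)_{\rho(k)\rho(l)}\ast(v,h)_{\rho(l)\rho(m)}$ in $P_c$.

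\emph{Units.} Units and inverses then agree automatically, since the unit formula uses $\e_{iii}$ and $\e^\vv_{kkk}=\e_{\rho(k)\rho(k)\rho(k)}$. Alternatively, a functor between groupoids that preserves products preserves units.

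\emph{Equivariance and projection.} The $\tg$-action is right multiplication in the second coordinate, so $f$ strictly commutes with it. Likewise, $\pi_c\circ f=\pi_{c_{|\vv}}$ because both return the point $u$ or $v$.

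Lemma~\ref{lem-cocycle-to-principal} already gives that both $\pi_c$ and $\pi_{c_{|\vv}}$ are principal $\tg$-bundles. It then only remains to note that $f$ is a $\tg$-functor over $\widebar{\B}$, which is the definition of a $\tg$-bundle morphism.

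I do not expect a real obstacle. The only point needing care is that the restriction $c_{|\vv}$ is defined precisely through such a choice of $\rho$. If the paper defines restriction with a different convention, for instance by allowing $\rho$ to vary, the formula for $f$ must use the same $\rho$ so that the $\x$ and $\e$ values match literally. No twisting by the maps $a$ and $d$ of Lemma~\ref{lem-morphisms-Pc} is then needed.
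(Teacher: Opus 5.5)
Your proposal is correct and is essentially the paper's proof: the paper also fixes a refinement map $\alpha$, defines $c_{|\vv}$ through it, and uses the functor $(u,x)_a\mapsto(u,x)_{\alpha(a)}$, $(v,g)_{ab}\mapsto(v,g)_{\alpha(a)\alpha(b)}$. Your explicit checks (continuity, source/target, products, units, equivariance, compatibility with the projections) are the verifications the paper leaves to the reader.
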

\begin{proof}
Denote $c=(\x,\e)$ and $\vv=\{V_a\}_{a\in A}$. Recall that a refinement map for $\vv$ is a map $\alpha \co A \to I$ such that   $V_a \subset U_{\alpha(a)}$ for all $a\in A$. Any such  map $\alpha$  induces the restricted cocycle $c_{|\vv}=c_{|\vv,\alpha}=(\x',\e')$ defined by 
$$
\x'_{ab}(v)= \x_{\alpha(a)\alpha(b)}(v) \quad \text{and} \quad \e'_{abc}(w) = \e_{\alpha(a)\alpha(b)\alpha(c)}(w)
$$
for all $a,b,c \in A$, $v \in V_{ab}$, and $w \in V_{abc}$. Then the $\tg$-bundle morphism $\pi_{c_{|\vv}} \to \pi_c$  is given by the functor $f\co P_{c_{|\vv}} \to P_c$ defined by 
$$
f\bigl( (u,x)_a \bigr) = (u,x)_{\alpha(a)} \quad \text{and} \quad f \bigl( (v,g)_{ab} \bigr) = (v,g)_{\alpha(a)\alpha(b)}
$$
for all $(u,x)_a \in (P_{c_{|\vv}})_0$ and $(v,g)_{ab} \in (P_{c_{|\vv}})_1$.
\end{proof}

\begin{lem}\label{lem-restriction-decomposition}
Let $c$ be a $\tg$-valued \v{C}ech cocycle subordinate to $\uu$ and $d$ be a $\tg$-valued \v{C}ech cocycle subordinate to a refinement $\vv$ of $\uu$. Then any $\tg$-bundle morphism $\pi_d \to \pi_c$ decomposes as a composition $\pi_d \to \pi_{c_{|\vv}} \to \pi_c$ of $\tg$-bundle morphisms.
\end{lem}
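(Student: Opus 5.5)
The plan is to build the factorization by hand, exactly as in the proof of Lemma~\ref{lem-morphisms-Pc}. Let $f \co \pi_d \to \pi_c$ be a $\tg$-bundle morphism, with $\uu=\{U_i\}_{i\in I}$, $\vv=\{V_a\}_{a \in A}$, and write $d=(\x',\e')$ and $c=(\x,\e)$. As in the proof of Lemma~\ref{lem-morphisms-Pc}, the condition $\pi_c\circ f=\pi_d$, together with the fact that $f$ is continuous and each $V_a\times\tg_0$ is (locally) sent into a single summand, yields a map $\alpha\co A \to I$ and continuous maps $b_a \co V_a \to \tg_0$ and $h_{ab}\co V_{ab}\to\tg_1$ such that
$$
f\bigl((u,x)_a\bigr)=\bigl(u,b_a(u)x\bigr)_{\alpha(a)}, \qquad f\bigl((v,g)_{ab}\bigr)=\bigl(v,h_{ab}(v)g\bigr)_{\alpha(a)\alpha(b)} .
$$
The map $\alpha$ is then a refinement map with $V_a\subset U_{\alpha(a)}$. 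If needed, we first pass to a cover where this is literally true: we may have to split $V_a$ according to which summand it lands in, and argue that such a subdivision does not affect the statement. I would choose this $\alpha$ to define $c_{|\vv}=c_{|\vv,\alpha}$, and let $r\co \pi_{c_{|\vv}}\to\pi_c$ be the morphism of Lemma~\ref{lem-restriction-cocycle-principal}. It is given by the identity on coordinates, with only the indices relabelled $a\mapsto\alpha(a)$.

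Next I would define $f'\co P_d\to P_{c_{|\vv}}$ by the same formulas with the indices kept:
$$
f'\bigl((u,x)_a\bigr)=\bigl(u,b_a(u)x\bigr)_a, \qquad f'\bigl((v,g)_{ab}\bigr)=\bigl(v,h_{ab}(v)g\bigr)_{ab}.
$$
Then $r\circ f'=f$ holds tautologically. Moreover, $f'$ is continuous, $\tg$-equivariant (the action is by right multiplication in the second factor), and commutes with the projections to $\widebar{\B}$.

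The key step is to check that $f'$ is a functor, i.e.\ that it respects sources, targets, units and products. The structure maps of $P_{c_{|\vv}}$ are those of $P_c$ read through $\alpha$: its target uses $\x_{\alpha(a)\alpha(b)}$ and its product uses $\e_{\alpha(a)\alpha(b)\alpha(c)}$. So each functoriality identity for $f'$ is literally the same equation in $\tg_0$ or $\tg_1$ as the corresponding identity for $f$, evaluated at the same point $v$. For example, $t(f'((v,g)_{ab}))=f'(t((v,g)_{ab}))$ amounts to $\x_{\alpha(a)\alpha(b)}(v)^{-1}t(h_{ab}(v)g)=b_b(v)\x'_{ab}(v)^{-1}t(g)$, and this is exactly what functoriality of $f$ gives. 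Since $r$ is faithful on the second coordinate and injective on indices within each fibre, these identities for $f'$ are immediate. The same reasoning applies to products and units.

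The main obstacle is the first step: extracting a well-defined index map $\alpha$, since $f$ might send different points of $V_a$ to different summands $U_i$. If that happens, I would refine $\vv$ by the connected components (or by the preimages of the summands, which are open and disjoint), note that $c_{|\vv}$ is compatible with such refinements, and use Lemma~\ref{lem-restriction-cocycle-principal} again to absorb the extra subdivision.
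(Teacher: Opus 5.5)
Your proposal is essentially the paper's proof. The paper uses the same $\alpha$, $b_a$, $h_{ab}$, obtained as in Lemma~\ref{lem-morphisms-Pc}. Its relabeling morphism $\iota$ from Lemma~\ref{lem-restriction-cocycle-principal} is your $r$, its index-preserving $q$ is your $f'$, and it concludes $f=\iota\circ q$.

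The paper simply takes the existence of $\alpha$ for granted. This is justified when each $V_a$ is connected, as for the good covers used in Theorem~\ref{thm-bundle-classification}.

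Your fallback for the disconnected case does not work as stated. The lemma concerns $\vv$ itself, and $\iota\circ f'$ sends the whole $a$-th summand of $P_d$ into the $\alpha(a)$-th summand of $P_c$. So if the summand index of $f$ varies on some $V_a$, subdividing and then relabeling via Lemma~\ref{lem-restriction-cocycle-principal} cannot recover $f$. You would instead need a non-canonical second morphism $\pi_{c_{|\vv}}\to\pi_c$, twisted by the transition isomorphisms $(u,1_{\x_{ij}(u)x})_{ij}$ of $P_c$.
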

\begin{proof} 
Denote $c=(\x,\e)$, $\vv=\{V_a\}_{a\in A}$, and  $d=(\x', \e')$. As in the proof of Lemma~\ref{lem-morphisms-Pc}, a $\tg$-bundle morphism $f\co \pi_d \to \pi_c$ determines a refinement map $\alpha \co A \to I$ and continuous maps $b_a \co V_a \to \tg_0$ and $h_{ab} \co V_{ab} \to \tg_1$ such that
$$
f \bigl( (u,x)_a \bigr) = \bigl(u,b_a(u)x \bigr)_{\ns \alpha(a)} \quad \text{and} \quad f \bigl( (v,g)_{ab} \bigr) = \bigl(v,h_{ab}(v)g \bigr)_{\ns \alpha(a)\alpha(b)}.
$$
By Lemma~\ref{lem-restriction-cocycle-principal}, the map  $\alpha$ determines a $\tg$-bundle morphism $\iota \co \pi_{c_{|\vv}}=\pi_{c_{|\vv,\alpha}} \to \pi_c$. Let $q \co  P_d \to P_{c_{|\vv,\alpha}}$ be the functor defined by
$$
q \bigl( (u,x)_a \bigr) = \bigl(u, b_a(u)x\bigr)_{\ns a} \quad \text{and} \quad q \bigl( (v,g)_{ab} \bigr) = \bigl(v,h_{ab}(v)g\bigr)_{\ns ab}.
$$
Then $q$ is a $\tg$-bundle morphism $\pi_d \to \pi_{c_{|\vv}}$ such that $f=\iota\circ q $.
\end{proof}

\subsection{From 2-bundles to \v{C}ech cocycles}\label{sect-bundle-to-cocycle} 
Let $\tg$ be a topological 2-group and $\pi \co P\to \widebar{\B}$ be a principal $\tg$-bundle over a topological space $\B$. Pick an atlas
$$
\aaa=\bigl\{\bigl(U_i,\, \psi_i \co P|_{U_i} \to \widebar{U}_{\!i} \times \tg , \,  \phi_i \co \widebar{U}_{\!i} \times \tg \to P|_{U_i}, \, \varepsilon_i \co \phi_{i}\psi_{i} \Rightarrow \id_{P|_{U_i}} \bigr)\bigr\}_{i \in I}
$$ 
of $\pi$  (see Section~\ref{sect-principal-2-bundles}). We derive from $\aaa$ a $\tg$-valued \v{C}ech cocycle $c_\aaa$ subordinate to the open cover $\uu=\{U_i\}_{i \in I}$ of $\B$ defined as follows.

For any $i,j \in I$, consider the transition functor
$$
T_{ij} \co \widebar{U}_{\!ij} \times \tg \to \widebar{U}_{\!ij} \times \tg
$$
defined as the composition
$\widebar{U}_{\!ij} \times \tg \xrightarrow{\phi_j} P|_{U_{ij}}  \xrightarrow{\psi_i} \widebar{U}_{\!ij} \times \tg$. 
Since $T_{ij}$ is continuous and commutes with the projection onto $\widebar{U}_{\!ij}$, there is a continuous map $\x_{ij} \co U_{ij} \to \tg_0$ such that for all $u \in U_{ij}$, 
$$
T_{ij}(u,1_{\tg_0})=\bigl(u,\x_{ij}(u)\bigr).
$$
Note that $\x_{ij}$ entirely determines $T_{ij}$ since its functoriality and $\tg$-equivariance imply that for all $u \in U_{ij}$, $x \in \tg_0$, and $g \in \tg_1$,
$$
T_{ij}(u,x)=\bigl(u,\x_{ij}(u)x\bigr) \quad \text{and} \quad T_{ij}(u,g)=\bigl(u,1_{\x_{ij}(u)}g\bigr).
$$

Next, for all $i,j,k \in I$ and $(u,x) \in U_{ijk}\times \tg_0$, set
$$
\gamma_{ijk}(u,x)=\psi_i\bigl(\varepsilon_j\phi_k(u,x)\bigr) \in U_{ijk}\times \tg_1.
$$
Then $\gamma_{ijk}$ is a $\tg$-equivariant continuous natural isomorphism from $T_{ij}T_{jk}$ to $T_{ik}$ both restricted to $U_{ijk}$. Consequently, there is a continuous map $\e_{ijk} \co U_{ijk} \to \te$ such that for all $(u,x) \in U_{ijk}\times \tg_0$, 
$$
\gamma_{ijk}(u,x)=\bigl(u,\e_{ijk}(u)1_{\x_{ij}(u)\x_{jk}(u)x}\bigr). 
$$
Set $\x=\{\x_{ij}\}_{i,j \in I}$ and $\e=\{\e_{ijk}\}_{i,j,k \in I}$.

\begin{lem}\label{lem-principal-to-cocycle}
$c_\aaa=(\x,\e)$ is  a $\tg$-valued \v{C}ech cocycle subordinate to $\uu$.
\end{lem}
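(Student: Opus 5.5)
We have to show two things: the maps $\x_{ij}$ and $\e_{ijk}$ are continuous with the right targets, and they satisfy the two cocycle identities
$$
t(\e_{ijk})\x_{ij}\x_{jk}=\x_{ik} \ \text{ on } U_{ijk} \quad \text{and} \quad \e_{ikl}\e_{ijk}=\e_{ijl}\bigl(\lact{\x_{ij}}{\!\e_{jkl}}\bigr) \ \text{ on } U_{ijkl}.
$$
Continuity and the fact that $\e_{ijk}$ lands in $\te=\Ker(s)$ come out of the construction. The map $\gamma_{ijk}$ is a composition of continuous maps. We then multiply on the right by $1_{(\x_{ij}\x_{jk}x)^{-1}}$ at $x=1$; since $\gamma_{ijk}$ is $\tg_0$-equivariant, the resulting element is independent of $x$. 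Its source is the source of $\gamma_{ijk}(u,x)$, namely $T_{ij}T_{jk}(u,x)=(u,\x_{ij}\x_{jk}x)$, multiplied by $(\x_{ij}\x_{jk}x)^{-1}$, which is trivial. So it lies in $\te$.

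\textbf{First identity.} We take the target of $\gamma_{ijk}(u,1)$. Since $\gamma_{ijk}$ is a natural transformation $T_{ij}T_{jk}\Rightarrow T_{ik}$, this target is $T_{ik}(u,1)=(u,\x_{ik}(u))$. On the other hand, $t$ is a group homomorphism and $t(1_y)=y$, so the target also equals $t(\e_{ijk})\x_{ij}\x_{jk}$. Comparing the two gives the first identity.

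\textbf{Second identity.} This is a coherence statement about the natural isomorphisms $\gamma$. I would first prove the pasting identity in the groupoid $\widebar{U}_{ijkl}\times\tg$:
$$
\bigl(T_{ij}(\gamma_{jkl})\bigr)\ast\gamma_{ijl}=\bigl(\gamma_{ijk}\,T_{kl}\bigr)\ast\gamma_{ikl},
$$
as natural transformations $T_{ij}T_{jk}T_{kl}\Rightarrow T_{il}$. To do this, unfold $\gamma_{ijk}=\psi_i\varepsilon_j\phi_k$. Both sides are then $\psi_i$ applied to composites of whiskered copies of $\varepsilon_j$ and $\varepsilon_k$, evaluated at $\phi_l(u,x)$. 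They agree by naturality of $\varepsilon_j$ with respect to the morphism $\varepsilon_k\phi_l(u,x)$: the two ways of composing the square $\phi_j\psi_j\phi_k\psi_k\Rightarrow\id$ coincide (the interchange law). The main obstacle is bookkeeping the whiskering carefully, in particular identifying $\phi_j\psi_j\varepsilon_k$ with $\varepsilon_k$ up to $\varepsilon_j$. If direct naturality gets messy, I would use the adjoint-equivalence data $\eta_j$ from Section~\ref{sect-principal-2-bundles} to rewrite the middle factor.

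Once the pasting identity holds, I evaluate it at $(u,1)$ and translate into the group operations of $\tg_1$, using the formula $g\ast h=h1_{t(g)^{-1}}g$ from Section~\ref{sect-2-groups}:
\begin{itemize}
\item $\gamma_{ijk}(u,y)=\e_{ijk}1_{\x_{ij}\x_{jk}y}$.
\item Whiskering on the right by $T_{kl}$ means evaluating at $y=\x_{kl}$.
\item Whiskering on the left by $T_{ij}$ multiplies on the left by $1_{\x_{ij}}$, so $1_{\x_{ij}}\e_{jkl}1_{\x_{jk}\x_{kl}}=\lact{\x_{ij}}{\e_{jkl}}\,1_{\x_{ij}\x_{jk}\x_{kl}}$.
\end{itemize}
Composing with $\ast$ and cancelling units using the first identity, the left side becomes $\e_{ijl}\lact{\x_{ij}}{\e_{jkl}}$ times a unit and the right side becomes $\e_{ikl}\e_{ijk}$ times the same unit. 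Cancelling the common unit yields the second cocycle identity.
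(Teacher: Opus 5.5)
Your proposal is correct and follows the paper's proof essentially step by step. The source and target of $\gamma_{ijk}$ give $\e_{ijk}\in\te$ and the first identity, and naturality of $\varepsilon_j$ at the morphism $\varepsilon_k(\phi_l(u,x))$ gives the commuting square of the $\gamma$'s (so your fallback via $\eta_j$ is never needed), which is then converted into the second identity using $g\ast h=h1_{t(g)^{-1}}g$, exactly as in the paper.
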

\begin{proof}
The fact that the  source of $\gamma_{ijk}(u,x)$ is  $T_{ij}T_{jk}(u,x)$ implies that each $\e_{ijk}$ does land into $\te \subset \tg_1$.
Since the target of $\gamma_{ijk}(u,x)$ is  $T_{ik}(u,x)$, we obtain $t(\e_{ijk})\x_{ij}\x_{jk}=\x_{ik}$ on $U_{ijk}$.
For any $i,j,k,l \in I$, the naturality of $\varepsilon_j$ implies the commutativity of the following diagram restricted to $U_{ijkl}$:
$$
\xymatrix@R=.8cm @C=1.5cm{
T_{ij}T_{jk}T_{kl}\ar@{->}[r]^-{\gamma_{ijk}T_{kl}}\ar@{->}[d]_-{T_{ij}(\gamma_{jkl})} & T_{ik}T_{kl} \ar@{->}[d]^-{\gamma_{ikl}} \\
T_{ij}T_{jl} \ar@{->}[r]_-{\gamma_{ijl}}& T_{il}.
}
$$
This commutativity amounts to the following equality on $U_{ijkl}$:
$$
\bigl ( \e_{ijk}1_{\x_{ij}\x_{jk}\x_{kl}} \bigr) \ast \bigl ( \e_{ikl}1_{\x_{ik}\x_{kl}} \bigr) = 
\bigl (1_{\x_{ij}} \e_{jkl}1_{\x_{jk}\x_{kl}}\bigr) \ast \bigl ( \e_{ijl}1_{\x_{ij}\x_{jl}} \bigr).
$$
We deduce that $\e_{ikl}\e_{ijk}=\e_{ijl}\bigl(\lact{\x_{ij}}{\!\e_{jkl}}\bigr)$ on $U_{ijkl}$ by
using the expression for the product $\ast$ of $\tg$ given in Section~\ref{sect-2-groups}.
\end{proof}
 
By Lemma~\ref{lem-cocycle-to-principal}, the cocycle $c=c_\aaa$ induces the principal $\tg$-bundle $\pi_c \co P_c\to \widebar{\B}$. 
Let  $f \co P_c \to P$ be the functor defined by
$$ 
f\bigl((u,x)_i\bigr)= \phi_i(u,x) \quad \text{and} \quad f\bigl((v,g)_{ij}\bigr)=  \phi_i(v,g) \ast \varepsilon_i \bigl(\phi_j(v,\x_{ij}^{-1}(v)t(g)) \bigr) 
$$
for all $(u,x)_i \in (P_c)_0$ and $(v,g)_{ij} \in (P_c)_1$.

\begin{lem}\label{lem-map-principal-to-cocycle}
The functor $f$ is a $\tg$-bundle  morphism $\pi_c \to \pi$.
\end{lem}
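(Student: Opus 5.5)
To show that $f$ is a $\tg$-bundle morphism, I check four things in order: $f$ is well defined and continuous, $f$ is a functor $P_c \to P$, $f$ is $\tg$-equivariant, and $\pi\circ f=\pi_c$.

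\textbf{Well-definedness and continuity.} On objects, $\phi_i(u,x)$ lies in $P|_{U_i}$, so $f$ is defined there. On a morphism $(v,g)_{ij}$, the composite
$$\phi_i(v,g)\ast\varepsilon_i\bigl(\phi_j(v,\x_{ij}^{-1}(v)t(g))\bigr)$$
is composable for the following reason. The target of $\phi_i(v,g)$ is $\phi_i(v,t(g))$. The source of $\varepsilon_i(\phi_j(v,y))$, with $y=\x_{ij}^{-1}(v)t(g)$, is
$$\phi_i\psi_i\phi_j(v,y)=\phi_iT_{ij}(v,y)=\phi_i(v,\x_{ij}(v)y)=\phi_i(v,t(g)).$$
So the two arrows compose. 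The source of the composite is $\phi_i(v,s(g))=f((v,s(g))_i)$, and its target is $\phi_j(v,y)=f((v,\x_{ij}^{-1}(v)t(g))_j)$, which are exactly the images under $f$ of $s$ and $t$ of $(v,g)_{ij}$. Continuity holds because $f$ is built from continuous maps on each piece of a coproduct.

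\textbf{Equivariance and the projection.} Equivariance follows from the $\tg$-equivariance of $\phi_i$, the $\tg_0$-equivariance of $\varepsilon_i$, and the compatibility of $\ast$ with the action, which holds because the action $a$ is a functor. The identity $\pi f=\pi_c$ follows from $\pi\phi_i=\mathrm{pr}_1$ and from the fact that $\varepsilon_i$ lies over identities, since $\pi\psi_i$ and $\pi\phi_i$ commute with the projections.

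\textbf{Functoriality: the main obstacle.} For units, $f(1_{(u,x)_i})=f((u,\e_{iii}^{-1}1_x)_{ii})$. I would show this is an identity by using $\e_{iii}$ together with the definition of $\gamma_{iii}$: the arrow $\phi_i(u,\e_{iii}^{-1}1_x)$ is inverse to $\varepsilon_i\phi_i(u,x)$ up to applying $\phi_i\psi_i$. More precisely, since $\phi_i$ is a functor, $\phi_i(u,\e_{iii}^{-1}1_x)$ is the inverse of $\phi_i(\gamma_{iii}(u,x))=\phi_i\psi_i\varepsilon_i\phi_i(u,x)$. By naturality of $\varepsilon_i$ applied to the arrow $\varepsilon_i\phi_i(u,x)$, this equals $\varepsilon_i\phi_i\psi_i\phi_i(u,x)$, and the claim follows.

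For composition, I would verify
$$f\bigl((v,g)_{ij}\ast(v,h)_{jk}\bigr)=f\bigl((v,g)_{ij}\bigr)\ast f\bigl((v,h)_{jk}\bigr).$$
On the left, $f((v,\ell)_{ik})$ with $\ell=\e_{ijk}\bigl(g\ast(1_{\x_{ij}}h)\bigr)$ expands to
$$\phi_i(v,\e_{ijk}1_{\cdots})\ast\phi_i\bigl(v,g\ast 1_{\x_{ij}}h\bigr)\ast\varepsilon_i(\cdots).$$
Here I use that left multiplication by an element of $\te$ in $\tg_1$ amounts to precomposition in the groupoid via the interchange law. On the right there is the middle factor $\varepsilon_i(\phi_j(\cdot))\ast\phi_j(v,h)$. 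I would move $\phi_j(v,h)$ past $\varepsilon_i$ using naturality of $\varepsilon_i$, which rewrites it as $\phi_i\psi_i\phi_j(v,h)=\phi_i(v,1_{\x_{ij}}h)$. The remaining factors $\varepsilon_i\phi_j$ and $\varepsilon_j\phi_k$ then combine using $\gamma_{ijk}=\psi_i\varepsilon_j\phi_k$ and the definition of $\e_{ijk}$. Naturality of $\varepsilon_i$ applied to $\varepsilon_j\phi_k$ converts $\phi_i(\gamma_{ijk})$ into the $\e_{ijk}$-factor. This bookkeeping, matching the $\e_{ijk}$ term and the interchange-law reorderings, is the step I expect to be the main obstacle. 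It is nevertheless a finite diagram chase using only the naturality of the $\varepsilon$'s and the formulas for $\ast$ from Section~\ref{sect-2-groups}.
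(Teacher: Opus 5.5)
Your plan is the paper's proof. Both check composability via $\psi_i\phi_j=T_{ij}$, then compatibility with $s$ and $t$, then functoriality from naturality of $\varepsilon_i$, and finally equivariance and $\pi f=\pi_c$ from the corresponding properties of $\phi_i$ and $\varepsilon_i$. Your functoriality chase is the right one and more explicit than the paper's one-line assertion: you commute $\phi_j(v,h)$ past $\varepsilon_i$ to get $\phi_i(v,1_{\x_{ij}}h)$, then use naturality of $\varepsilon_i$ at $\varepsilon_j\phi_k(v,z)$ with $z=\x_{jk}^{-1}t(h)$ to produce $\phi_i(\gamma_{ijk}(v,z))$. Two local slips need correcting, though neither affects the strategy.

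\textbf{Unit check.} $\phi_i(u,\e_{iii}^{-1}1_x)$ is not inverse to $\phi_i(\gamma_{iii}(u,x))$.
\begin{itemize}
\item The cocycle identity gives $t(\e_{iii})=\x_{ii}^{-1}$, so both arrows end at $\phi_i(u,\x_{ii}x)$.
\item They start at $\phi_i(u,x)$ and $\phi_i(u,\x_{ii}^2x)$ respectively.
\item $\x_{ii}$ need not be trivial, since $\psi_i\phi_i$ is only isomorphic to the identity.
\end{itemize}
What is true is that $\phi_i(u,\e_{iii}^{-1}1_x)$ is inverse to $\phi_i(\gamma_{iii}(u,\x_{ii}^{-1}x))=\phi_i\psi_i\varepsilon_i\phi_i(u,\x_{ii}^{-1}x)=\varepsilon_i\phi_i\psi_i\phi_i(u,\x_{ii}^{-1}x)=\varepsilon_i\phi_i(u,x)$. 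This gives $f(1_{(u,x)_i})=1_{\phi_i(u,x)}$. You can also drop this check: once $f$ preserves composition, $f(1_p)$ is an idempotent arrow in a groupoid, hence an identity.

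\textbf{Composition check.} The first two factors of your expansion of $f((v,\ell)_{ik})$ are in the wrong order for the paper's convention $a\ast b=b1_{t(a)^{-1}}a$.
\begin{itemize}
\item Write $a=g\ast(1_{\x_{ij}}h)$. Then $\ell=\e_{ijk}a=a\ast(\e_{ijk}1_{t(a)})$, with $t(a)=\x_{ij}t(h)=\x_{ij}\x_{jk}z$.
\item Hence $\phi_i(v,\ell)=\phi_i(v,a)\ast\phi_i(\gamma_{ijk}(v,z))$. The $\e_{ijk}$-factor comes after $\phi_i(v,a)$, which is exactly the order your manipulation of the right-hand side produces.
\item The trailing factors $\varepsilon_i\phi_k(\cdot)$ on the two sides agree because $\x_{ik}^{-1}t(\ell)=z$. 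This follows from the cocycle identity $t(\e_{ijk})\x_{ij}\x_{jk}=\x_{ik}$, which you should invoke explicitly.
\end{itemize}
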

\begin{proof}
Let $(v,g)_{ij} \in (P_c)_1$. Set $x=\x_{ij}(v)^{-1} t(g) \in \tg_0$. Since 
\begin{gather*}
s\varepsilon_i \bigl(\phi_j(v,x) \bigr)  =  \phi_i\psi_i\phi_j(v,x)=\phi_i T_{ij}(v,x)\\
 = \phi_i (v,\x_{ij}(v)x) = \phi_i (v,t(g))=\phi_i t(v,g)=t\phi_i(v,g),
\end{gather*}
we get that $f\bigl((v,g)_{ij}\bigr)=  \phi_i(v,g) \ast \varepsilon_i \bigl(\phi_j(v,x) \bigr) $ is well-defined. Moreover,
\begin{gather*}
sf\bigl((v,g)_{ij}\bigr)=s\phi_i(v,g)=\phi_is(v,g) =\phi_i(v,s(g))
=f\bigl((v,s(g))_i\bigr)=fs\bigl((v,g)_{ij}\bigr),\\
tf\bigl((v,g)_{ij}\bigr)=t\varepsilon_i \bigl(\phi_j(v,x) \bigr)=\phi_j(v,x)
=f\bigl((v,x)_j\bigr)=ft\bigl((v,g)_{ij}\bigr).
\end{gather*}
Thus $f$ is compatible with the source and target maps. The functoriality of $f$ follows from the naturality of $\varepsilon_i$, the functoriality of $\phi_i$ and $\psi_i$, and the definition of the product of $P_c$, by using that
$$
f\bigl((v,g)_{ij}\bigr)=  \phi_i(v,g) \ast \varepsilon_i \bigl(tf((v,g)_{ij}) \bigr).
$$
The continuity of $f$  follows from that of the product $\ast$ of $P$, the functors $\phi_i$, and the natural transformations~$\varepsilon_i$.
The $\tg$-equivariance of $f$ follows from that of $\phi_i$ and~$\varepsilon_i$ and from the functoriality of the $ \tg$-action of $P$. Finally the compatibility of the functors $\psi_i$ and $\phi_i$ with $\pi_{|U_i}$ and $\mathrm{pr_1}$ (see Section~\ref{sect-principal-2-bundles}) implies that $\pi \circ f=\pi_c$. Hence, $f$ is a $\tg$-bundle morphism.
\end{proof}

\subsection{Classification of 2-bundles}\label{sect-classification} 
Let $\tg$ be a topological 2-group and $\B$ be a topological space.
Denote by $[c]$ the cohomology class of a $\tg$\ti valued \v{C}ech cocycle~$c$,  by $[\pi]$ the equivalence class of a principal $\tg$-bundle $\pi$ over~$\B$, and by $\pp(\B,\tg)$ the set of equivalence classes of principal $\tg$-bundles over~$\B$.

Let $\uu$ be an open cover of $\B$. The map 
$$
\check{H}(\uu,\tg) \xrightarrow{\pi_\uu} \pp(\B,\tg), \quad [c] \mapsto [\pi_c]
$$
is well-defined (by Lemma~\ref{lem-morphisms-Pc}) and is compatible with restriction (by Lemma~\ref{lem-restriction-cocycle-principal}):
if $\vv$ is a refinement of $\uu$, then the following diagram 
$$
\xymatrix@R=.1cm @C=1.2cm{ \check{H}(\uu,\tg) \ar@{->}[dd] \ar@{->}[rd]^-{\pi_\uu}  & \\
&  \pp(\B,\tg) \\
\check{H}(\vv,\tg) \ar@{->}[ru]_-{\pi_\vv}  & 
}
$$
commutes, where the vertical arrow is induced by restriction of cocycles. Consequently, since $\check{H}(\B,\tg) = \varinjlim \check{H}(\uu,\tg)$, there exists a unique map
$$
\Gamma \co \check{H}(\B,\tg) \to \pp(\B,\tg)
$$
such that $\pi_\uu=\Gamma \circ \kappa_\uu$ for all open cover $\uu$ of $\B$, where $\kappa_\uu \co \check{H}(\uu,\tg) \to \check{H}(\B,\tg)$ is the cocone map.

\begin{thm}\label{thm-bundle-classification}
The map $\Gamma \co \check{H}(\B,\tg) \to \pp(\B,\tg)$ is surjective. Moreover, if $\B$ admits good covers (see Section~\ref{sect-cocyle-descript}), then it is bijective.
\end{thm}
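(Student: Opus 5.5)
Surjectivity is the easier half. Given a principal $\tg$-bundle $\pi$ over $\B$, pick an atlas $\aaa$ with underlying cover $\uu$. By Lemma~\ref{lem-principal-to-cocycle}, $c_\aaa$ is a $\tg$-valued \v{C}ech cocycle subordinate to $\uu$. By Lemma~\ref{lem-map-principal-to-cocycle}, there is a $\tg$-bundle morphism $\pi_{c_\aaa}\to\pi$, so $[\pi]=[\pi_{c_\aaa}]=\pi_\uu([c_\aaa])=\Gamma(\kappa_\uu[c_\aaa])$.

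For injectivity, assume $\B$ admits good covers, and let $\Gamma(\xi)=\Gamma(\xi')$. The first step is to choose a common cover. Write $\xi=\kappa_\uu[c]$ and $\xi'=\kappa_{\uu'}[c']$. Take a good cover $\mathcal{W}$ refining both $\uu$ and $\uu'$; this uses the good-cover hypothesis on a common refinement such as $\{U\cap U'\}$. Replacing $c,c'$ by their restrictions to $\mathcal{W}$ does not change $\xi,\xi'$. So we may assume $c,c'$ are subordinate to the same good cover $\mathcal{W}$, and that $\kappa_{\mathcal{W}}$ is a bijection.

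The next step handles the span. By hypothesis there is a principal bundle $\widetilde\pi$ with morphisms $\widetilde\pi\to\pi_c$ and $\widetilde\pi\to\pi_{c'}$. Choose an atlas of $\widetilde\pi$ and then a good cover $\vv$ refining both that atlas cover and $\mathcal{W}$; restricting the charts to $\vv$ keeps it an atlas. By Lemmas~\ref{lem-principal-to-cocycle} and~\ref{lem-map-principal-to-cocycle}, we obtain a cocycle $d$ subordinate to $\vv$ and a morphism $\pi_d\to\widetilde\pi$. Composing gives $\tg$-bundle morphisms $\pi_d\to\pi_c$ and $\pi_d\to\pi_{c'}$. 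By Lemma~\ref{lem-restriction-decomposition}, these factor as $\pi_d\to\pi_{c_{|\vv}}\to\pi_c$ and $\pi_d\to\pi_{c'_{|\vv}}\to\pi_{c'}$. Since $d$, $c_{|\vv}$ and $c'_{|\vv}$ are all subordinate to $\vv$, Lemma~\ref{lem-morphisms-Pc} shows that $d$ is cohomologous to $c_{|\vv}$ and to $c'_{|\vv}$. Hence $[c_{|\vv}]=[c'_{|\vv}]$ in $\check{H}(\vv,\tg)$, and therefore $\xi=\kappa_\vv[c_{|\vv}]=\kappa_\vv[c'_{|\vv}]=\xi'$.

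The main obstacle is the last application of Lemma~\ref{lem-morphisms-Pc}. That lemma concerns cocycles on the same cover, but the morphism $\pi_d\to\pi_{c_{|\vv}}$ supplied by Lemma~\ref{lem-restriction-decomposition} is built over the identity refinement map of $\vv$. I would check directly that the construction in the proof of Lemma~\ref{lem-morphisms-Pc} applies with $\alpha=\id$ in this case, or note that that proof already allows an arbitrary $\alpha$ with $U_i\subset U_{\alpha(i)}$. A further point needing care is that the restriction maps $\check{H}(\uu,\tg)\to\check{H}(\vv,\tg)$ are independent of the chosen refinement map. This is implicit in the colimit definition; goodness of the covers and the resulting bijectivity of $\kappa$ make it harmless here, since the argument above only uses equalities in the colimit.
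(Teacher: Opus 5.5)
Your proof is correct and matches the paper's argument essentially step for step. Surjectivity uses the atlas cocycle together with Lemma~\ref{lem-map-principal-to-cocycle}; injectivity uses a trivializing good refinement $\vv$, the cocycle $d$, Lemma~\ref{lem-restriction-decomposition} and Lemma~\ref{lem-morphisms-Pc}. The only cosmetic difference is that you pass to a common good refinement of two representatives, whereas the paper uses bijectivity of $\kappa_\uu$ for a single good cover $\uu$. Your flagged obstacle is not one: the converse direction in the proof of Lemma~\ref{lem-morphisms-Pc} already extracts an arbitrary map $\alpha$ with $U_i\subset U_{\alpha(i)}$ from the morphism, so the case $\alpha=\id$ is covered.
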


\begin{proof}
Let~$\pi$ be a principal $\tg$-bundle over $\B$. Pick an atlas $\aaa$ of $\pi$ and denote by $\uu$ the open cover underlying $\aaa$.
Let $c=c_\aaa$ be the $\tg$-valued \v{C}ech cocycle subordinate to $\uu$ associated with $\aaa$ (see Lemma~\ref{lem-principal-to-cocycle}). By  Lemma~\ref{lem-map-principal-to-cocycle}, there is a $\tg$-bundle  morphism from $\pi_c$ to $\pi$. Then 
$$
\Gamma\bigl(\kappa_\uu([c])\bigr)=\pi_\uu\bigl([c]\bigr)=[\pi_c]=[\pi].
$$
Thus $\Gamma$ is surjective.

Assume now that $\B$ admits good covers and let us prove that $\Gamma$ is injective. Let $a,b \in \check{H}(\B,\tg) $ such that $\Gamma(a)=\Gamma(b)$. Pick a good open cover $\uu$ of $\B$. Since the cocone map  $\kappa_\uu$ is then a bijection, there are $\tg$-valued \v{C}ech cocycles $c$ and $c'$ subordinate to $\uu$ such that $a=\kappa_\uu([c])$ and $b=\kappa_\uu([c'])$.
Then 
$$
[\pi_c]=\pi_\uu([c])=\Gamma(a)=\Gamma(b)=\pi_\uu([c'])=[\pi_{c'}].
$$
Consequently, there exist  a principal $\tg$-bundle  $\pi$ over $X$ and $\tg$-bundle morphisms $\pi \to \pi_c$ and $\pi \to \pi_{c'}$. Let $\vv$ be a trivializing good open cover for $\pi$ which refines the cover $\uu$. Pick an atlas for $\pi$ subordinate to $\vv$ and consider the associated  \v{C}ech cocycle  $d$ subordinate to $\vv$ (see Lemma~\ref{lem-principal-to-cocycle}). Composing the $\tg$-bundle  morphism $\pi_d \to \pi$ from Lemma~\ref{lem-map-principal-to-cocycle} with the above morphisms from $\pi$, we obtain  $\tg$-bundle morphisms $\pi_d \to \pi_c$ and $\pi_d \to \pi_{c'}$. By Lemma~\ref{lem-restriction-decomposition}, these morphisms decompose as  $\pi_d \to \pi_{c_{|\vv}} \to \pi_c$ and $\pi_d \to \pi_{c'_{|\vv}} \to \pi_{c'}$. In particular, using Lemma~\ref{lem-morphisms-Pc}, we deduce that $[c_{|\vv}]=[d]=[c'_{|\vv}]$. Consequently 
$$
a=\kappa_\uu([c])=\kappa_\vv\bigl([c_{|\vv}]\bigr)=\kappa_\vv\bigl([c'_{|\vv}]\bigr)=\kappa_\uu([c'])=b.
$$
Hence $\Gamma$ is injective, and so bijective. 
\end{proof}

The next corollary is a direct consequence of Theorem~\ref{thm-bundle-classification} and the Baez-Stevenson theorem (see  Section~\ref{sect-cocyle-descript}):

\begin{cor}\label{cor-principal-homotopy}
Assume that  $\tg$ is well-pointed and that  $\B$ is paracompact Hausdorff and admits good covers. Then there is a canonical bijection 
$$
\pp(\B,\tg) \cong [\B, B|\tg|]
$$
between the set of equivalence classes of principal $\tg$-bundles over~$\B$ and the set of homotopy classes of maps from $\B$ to the classifying space of the topological group~$|\tg|$ defined as the geometric realization of the nerve of $\tg$.
\end{cor}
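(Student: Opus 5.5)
The corollary follows by composing two bijections; the only genuine check is that the first one applies to the spaces at hand.

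First, Theorem~\ref{thm-bundle-classification} applies. Since $\B$ admits good covers, the map $\Gamma\co \check{H}(\B,\tg)\to \pp(\B,\tg)$ is a bijection. This map is canonical: it is the unique map factoring the compatible family $\pi_\uu\co [c]\mapsto[\pi_c]$ through the colimit defining $\check{H}(\B,\tg)$.

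Second, the Baez--Stevenson theorem recalled in Section~\ref{sect-cocyle-descript} applies. Its hypotheses are exactly those of the corollary: $\tg$ is well-pointed, and $\B$ is paracompact Hausdorff and admits good covers. It therefore gives a canonical bijection $\check{H}(\B,\tg)\cong[\B,B|\tg|]$.

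The desired bijection is the composite
$$
\pp(\B,\tg)\xrightarrow{\Gamma^{-1}}\check{H}(\B,\tg)\cong[\B,B|\tg|].
$$
Both factors are canonical, so the composite is canonical.

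The main point to check is that the ``good covers'' hypothesis means the same thing in both results. In Theorem~\ref{thm-bundle-classification} it was used in the form that every open cover is refined by a good cover, on which the cocone map $\kappa_\uu$ is bijective. This is precisely the notion fixed in Section~\ref{sect-cocyle-descript}, which is also the one used in stating the Baez--Stevenson result. So no further argument is needed beyond composing the two bijections.
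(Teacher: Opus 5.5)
Your proposal is correct and is the paper's argument: the paper presents the corollary as a direct consequence of composing the bijection $\Gamma\co \check{H}(\B,\tg)\to\pp(\B,\tg)$ of Theorem~\ref{thm-bundle-classification} (bijective because $\B$ admits good covers) with the Baez--Stevenson bijection $\check{H}(\B,\tg)\cong[\B,B|\tg|]$. Your extra remark that both results use the same notion of ``admits good covers'' is a harmless sanity check.
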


Note that under the bijection of Corollary~\ref{cor-principal-homotopy}, the equivalence class of a trivializable principal $\tg$-bundle over~$\B$ (see Section~\ref{sect-principal-2-bundles}) corresponds to the homotopy class of a constant map $\B \to  B|\tg|$.

\subsection{Flat 2-bundles}\label{sect-flat}
Let $\tg$ be a topological $2$\ti group. Denote by $\tg^d$ the discrete $2$\ti group associated with the 2-group underlying~$\tg$ (see Section~\ref{sect-top-2-groups}). Note that the identity functor $\tg^d \to \tg$ is continuous and is a morphism of topological $2$\ti groups.  Consequently,  any  $\tg$-action on a 2-space induces a  $\tg^d$-action on this 2-space. In particular, any  $\tg$-2-space $P$ is a  $\tg^d$-2-space in a canonical way.

Let $\B$ be a topological space. Any morphism $\mathcal{K} \to \tg$ of topological $2$\ti groups induces a map $\check{H}(\B,\mathcal{K})\to \check{H}(\B,\tg)$. Composing this map with the surjection $\check{H}(\B,\tg) \to \pp(\B,\tg)$ of Theorem~\ref{thm-bundle-classification} gives rise to the map $\check{H}(\B,\mathcal{K}) \to \pp(\B,\tg)$. In particular, the morphism $\tg^d \to \tg$ leads to the map
\begin{equation}\label{eq-flat}
\check{H}(\B,\tg^d) \to \pp(\B,\tg).
\end{equation}
A  principal $\tg$-bundle  over $\B$ is \emph{flat} if its equivalence class lies in the image of \eqref{eq-flat}. A \emph{flat structure} of a flat principal $\tg$-bundle $\pi$ over $\B$ is an antecedent of $[\pi]$ under~\eqref{eq-flat}.

A $\tg$-valued \v{C}ech cocycle $c=(\x,\e)$ subordinate to an open cover $\uu= \{U_i\}_{i \in I}$ of $\B$ is \emph{locally constant} if the maps $\x_{ij} \co U_{ij} \to \tg_0$ and $\e_{ijk} \co U_{ijk} \to \te$ are locally constant for all $i,j,k \in I$. Observe that a $\tg$-valued \v{C}ech cocycle is locally constant if and only if it is a $\tg^d$-valued \v{C}ech cocycle. Consequently, a principal $\tg$-bundle  over $\B$ is flat if and only if it is equivalent to the principal $\tg$-bundle $\pi_c$ associated with a locally constant $\tg$-valued \v{C}ech cocycle $c$ subordinate to some open cover of~$\B$ (see Lemma~\ref{lem-cocycle-to-principal}).

Note that if the $2$\ti group $\tg$ is discrete (see Section~\ref{sect-top-2-groups}) and $X$ admits good covers, then the map \eqref{eq-flat} is bijective (by Theorem~\ref{thm-bundle-classification}) and so any  principal $\tg$\ti bundle  over~$\B$ is flat and has a   unique flat structure.

Let $\chi \co E \to H$ be a crossed module (see Section~\ref{sect-crossed-modules-def}). The 2-group $\tg_\CM$ associated with $\chi$ (see Section~\ref{sect-2-groups}) is discrete (by endowing $E$ and $H$ with the discrete topology) and so any  principal $\tg_\chi$-bundle  over~$\B$ is flat and has a   unique flat structure. Note that the classifying space of the geometric realization of the nerve of~$\tg_\CM$ is homotopy equivalent to the classifying space $B \chi$ of $\chi$ (see Section~\ref{sect-crossed-modules-classifying-spaces}). Moreover~$\tg_\chi$ is well-pointed (as any discrete 2-group). Consequently, we deduce from
Theorem~\ref{thm-bundle-classification} and Corollary~\ref{cor-principal-homotopy} that if  $\B$ is paracompact Hausdorff and admits good covers, then there are canonical bijections
$$
\pp(\B,\tg_\chi) \cong \check{H}(\B,\tg_\chi) \cong [\B,B\chi].
$$
Note that under these bijections, the equivalence class of a trivializable principal $\tg_\CM$-bundle over~$\B$  corresponds to the 
cohomology class of the unit \v{C}ech cocycle and to the homotopy class of a constant map $\B \to  B\CM$.

\section{Crossed modules of groupoids}\label{sect-crossed-modules-groupoids}
In this appendix, we review the basics of crossed modules of groupoids.

\subsection{Groupoids}\label{sect-groupoids}
A \emph{groupoid} is a small category in which all morphisms are isomorphisms. Given objects $a,b$ of a groupoid $G$, we denote by $G(a,b)$ the set of morphisms from $a$ to $b$ and by $G(a)$ the set of automorphisms of $a$.  We will often refer to the morphisms, identities, and composition of $G$ as the \emph{elements}, the \emph{units}, and the \emph{product} of $G$. As for the fundamental groupoids (for which the product is the concatenation of paths) and contrary to the usual notation for categories, we denote the product of two elements $x \in G(a,b)$ and $y \in G(b,c)$ by $xy\in G(a,c)$, and we denote the unit of an object $a$ by $1_a$. Given a set $S$, a \emph{groupoid over~$S$} is a groupoid whose set of objects is $S$.  A \emph{morphism of groupoids} between two groupoids is a functor between them.

Recall that a category is \emph{totally disconnected} if all of its morphisms are endomorphisms, and is \emph{skeletal} if isomorphic objects are necessarily equal. Clearly, a groupoid is totally disconnected if and only if it is skeletal. Such a groupoid is just a collection of groups indexed by the set of objects. A totally disconnected groupoid $G$ is \emph{abelian} if for any object $a$ of $G$, the group $G(a)$ is abelian.

\subsection{Free groupoids}\label{sect-free-groupoids}
The \emph{free groupoid on an oriented graph} is the groupoid whose objects are the vertices of the graph and whose elements are finite concatenations of the edges of the graph and formal inverses to them.  The only relations between elements are the necessary ones defining the unit of each object, the inverse of each edge, and the associativity of the product. 

A groupoid is \emph{free} if it is isomorphic to the free groupoid on some oriented graph. A \emph{free basis} for a free groupoid ${F}$ is a set $\mathcal{B}$ of elements of $F$
such that~${F}$ is isomorphic (via an isomorphism inducing the identity on objects) to the free groupoid on the oriented graph whose vertices are the objects of ${F}$ and whose edges are  the elements of $\mathcal{B}$.
Note that a functor from a free groupoid to another category is fully determined by its values on a free basis.

\subsection{Actions of groupoids}
Let $G$ be a groupoid with the set of objects~$G_0$. An \emph{action} of $G$ on a groupoid $D$ over~$G_0$ is a family of maps
$$
\bigl\{ G(a,b) \times {D}(b) \to {D}(a), \quad (x,e) \mapsto \lact{x}{\!e}\, \bigr\}_{a,b\in G_0}
$$
satisfying
$$
\lact{(1_c)}{e}=e, \quad \lact{x}{}(\lact{y}{e})=\lact{xy}{e}, \quad \text{and} \quad \lact{y}{(ef)}=(\lact{y}{e})(\lact{y}{f})
$$
for all $a,b,c \in G_0$, $e,f \in {D}(c)$, $x \in G(a,b)$, and $y \in G(b,c)$.
For example, $G$  acts on itself by conjugation: $$\lact{x}{y}=xyx^{-1} \in G(a)$$ for all $a,b \in G_0$, $y \in G(b)$, and $x \in G(a,b)$.

\subsection{Crossed modules of groupoids}\label{sect-crossed-modules-over-groupoids}
Let ${H}$ be a groupoid with the set of objects~${H}_0$.
A \emph{crossed module} over ${H}$ is a morphism of groupoids $$\CM \co {E} \to {H}$$
where ${E}=\{{E}(a)\}_{a \in {H}_0}$ is a totally disconnected groupoid over ${H}_0$ endowed with an action of ${H}$ such that
\begin{itemize}
\item $\CM$ is identity on objects,
\item $\CM$ preserves the ${H}$-action, where ${H}$ acts on itself by conjugation,
\item $\Ima(\CM)$ acts by conjugation on ${E}$.
\end{itemize}
This means that for all $a,b \in H_0$ and all elements $e,f\in E(b)$ and $x \in H(a,b)$,
$$
\CM(a)=a,  \quad \CM(\elact{x}{e})=x\CM(e)x^{-1},  \quad \lact{{\CM(e)}}{\!f}=efe^{-1}.
$$

Note that any crossed module (of groups) in the sense of Section~\ref{sect-crossed-modules-def} is a crossed module in the above sense when viewing a group as a groupoid with a single object.

\subsection{Morphisms of crossed modules of groupoids}\label{sect-morphisms-crossed-modules-over-groupoids}
A \emph{morphism of crossed modules} from a crossed module $\CM \co {E} \to {H}$ to a crossed module $\mu \co F \to K$ is a pair $(\psi \co {E} \to F,\varphi \co {H} \to K)$ of morphisms of groupoids inducing the same map on objects and satisfying
$$
\mu\bigl(\psi(e) \bigr)=\varphi \bigl( \CM(e) \bigr) \quad \text{and} \quad \psi(\elact{x}{e})=\lact{\varphi(x)}{\psi(e)}
$$
for all elements  $e\in E$ and  $x \in H$ with the same target.

\subsection{Free crossed modules}\label{sect-free-crossed-modules-over-groupoids}
Let ${H}$ be a groupoid with the set of objects~${H}_0$.
Consider a set $R$ and a map $\omega \co R \to {H}$ such that for each $r \in R$, the element $\omega(r)$ of $H$ is an endomorphism.
For any $a \in {H}_0$, let $\mathcal{P}_\omega(a)$ be the free group on the set of pairs  $(x,r)$ with $r  \in R$ and $x \in {H}(a,s(\omega(r)))$, where $s\co H \to H_0$ is the source map.
An action of~${H}$ on the totally disconnected groupoid  $\mathcal{P}_\omega=\{\mathcal{P}_\omega(a)\}_{a \in {H}_0}$ is given by $\lact{y}{(x,r)}=(yx,r)$ provided $yx$ is defined. The formula $(x,r) \mapsto x \omega(r) x^{-1}$ induces a morphism of groupoids $\upsilon_\omega \co \mathcal{P}_\omega \to {H}$ which is the identity on objects and preserves the ${H}$-action (where ${H}$ acts on itself by conjugation). Let $a \in {H}_0$. The \emph{Peiffer commutator} of two elements $m,n \in \mathcal{P}_\omega(a)$ is
$$
[\![m,n]\!]=\bigl(\lact{\upsilon_\omega(m)}{n}\bigr) m n^{-1}m^{-1} \in \mathcal{P}_\omega(a).
$$
Denote by $C_\omega(a)$ the subgroup of $\mathcal{P}_\omega(a)$ generated by the Peiffer commutators. Then $C_\omega(a)$ is a normal subgroup of $\mathcal{P}_\omega(a)$. Note that $C_\omega(a)$ can also be defined as the normal subgroup of $\mathcal{P}_\omega(a)$ generated by the basic Peiffer commutators which are Peiffer commutators of generators of $\mathcal{P}_\omega(a)$. Consider the groupoid 
$$
\mathcal{F}_\omega=\{\mathcal{F}_\omega(a)=\mathcal{P}_\omega(a)/C_\omega(a)\}_{a \in {H}_0}.
$$
The morphism $\upsilon_\omega$ and the $H$-action factorize through the quotient map $\mathcal{P}_\omega \to \mathcal{F}_\omega$ and give rise to a crossed module of groupoids $\nu_\omega\co\mathcal{F}_\omega \to {H}$ called the \emph{free crossed module on $\omega$}.

A crossed module $E \to {H}$ is \emph{free} if $H$ is a free groupoid and if it is isomorphic to  the free crossed module on some map to the endomorphisms of ${H}$. A \emph{free basis} for a free crossed module $\CM \co {E} \to {H}$ is a pair $\{\mathcal{B}_2,\mathcal{B}_1\}$ where $\mathcal{B}_1$ is a free basis for~$H$ (see Section~\ref{sect-free-groupoids}) and $\mathcal{B}_2$ is a set of elements of $E$ such that~$\CM$ is isomorphic (via an isomorphism inducing the identity on objects) to the free crossed module on the map $\mathcal{B}_2 \to {H}$ sending $b \in \mathcal{B}_2$ to $\CM(b)$. 

Let  $\CM \co {E} \to {H}$ be a free crossed module, $\mu \co F \to K$ be a crossed module, and $\rho\co H_0 \to K_0$ be a map between the sets of objects of $H$ and $K$. Then a morphism of crossed modules $(\psi,\varphi)\co\CM \to \mu$ inducing the map $\rho$ on objects is fully determined by its value on a free basis  $\{\mathcal{B}_2,\mathcal{B}_1\}$ for $\CM$, that is, by the elements
$\{f_b=\psi(b)\in F\}_{b \in \mathcal{B}_2}$ and   $\{k_c=\varphi(c)\in K\}_{c \in \mathcal{B}_1}$ satisfying
$$
s(k_c)= \rho(s(c)), \quad t(k_c)= \rho(t(c)), \quad t(f_b)= \rho(t(b)), \quad \mu(f_b)=\varphi ( \CM(b))
$$
for all $c \in \mathcal{B}_1$ and $b \in \mathcal{B}_2$, where $s,t$ are the source and target maps.

\section{Homotopy classification theorem}\label{sect-HVThm}
In this appendix, we recall the homotopy classification theorem. To this end, we first review the basics of crossed complexes. For more details, we refer to \cite{BHS}.

\subsection{Modules over groupoids}\label{sect-modules-def}
Let $G$ be a groupoid with the set of objects~$G_0$. A \emph{$G$-module} is an abelian totally disconnected groupoid over $G_0$  endowed with an action of $G$. A morphism of $G$-modules is a morphism of groupoids which is the identity on objects and preserves the $G$-action.

Given a map $\omega \co R \to G_0$, where $R$ is a set, the \emph{free $G$-module  on $\omega$} is the $G$\ti module ${F}(\omega)=\{{F}(\omega)(a)\}_{a \in G_0}$ where ${F}(\omega)(a)$ is the free abelian group on the set of pairs  $(x,r)$ with $r  \in R$ and $x \in G(a,\omega(r))$ and the action of $G$ is given by $\lact{y}{(x,r)}=(yx,r)$ provided $yx$ is defined in $G$.

A $G$-module is \emph{free} if it is isomorphic to a free $G$-module on some map. A \emph{free basis} for a free $G$-module ${M}$ is a set $\mathcal{B}$ of elements of $M$ such that ${M}$ is isomorphic (via an isomorphism inducing the identity on objects) to the free $G$-module on the map $\mathcal{B} \to G_0$ sending $b \in \mathcal{B}$ to the source of $b$.

\subsection{Crossed complexes of groupoids}\label{sect-crossed-complexes-def}
A \emph{crossed complex} $\CO$ is a sequence of groupoid morphisms
\begin{align*}
  \cdots \to \CO_n \xrightarrow{\delta_n} \CO_{n-1} \xrightarrow{ \delta_{n-1}} \cdots \cdots \xrightarrow{\delta_3} \CO_2 \xrightarrow{\delta_2} \CO_1
\end{align*}
where $\delta_2$ is a crossed module of groupoids and for all $n \geq 3$,
\begin{itemize}
\item $\CO_n$ is a $\CO_1$-module on which $\Ima(\delta_2)$ acts trivially,
\item $\delta_n$ is a morphism of groupoids which is the identity on objects and preserves the $\CO_1$-action,
\item $\delta_{n-1}\delta_n$ is the trivial morphism (which sends elements to units).
\end{itemize}
The condition that $\Ima(\delta_2)$ acts trivially on $\CO_n$ for $n\geq 3$ implies that $\CO_1$ acts on~$\CO_n$ through the fundamental groupoid $$\pi_1(\CO)= \Coker(\delta_2) = \CO_1 / \Ima(\delta_2)$$ of $\CO$, thereby making $\CO_n$ into a $\pi_1(\CO)$-module.

\subsection{Morphisms of crossed complexes}\label{sect-morphisms-crossed-complexes}
Let $\CO$ and $D$ be crossed complexes. A \emph{morphism of crossed complexes} $f \co  \CO \to D$ is a family $f=\{f_n \co\CO_n \to D_n\}_{n \geq 1}$ of morphisms of groupoids
$$
  \begin{tikzcd}
  \cdots \arrow[r] &  \CO_n \arrow[r, "\delta_n"] \arrow[d, "f_n"] & \CO_{n-1} \arrow[r, "\delta_{n-1}"] \arrow[d, "f_{n-1}"] & \cdots \arrow[r, "\delta_3"]  & \CO_2 \arrow[r, "\delta_2"] \arrow[d, "f_2"]& \CO_1 \arrow[d, "f_1"] \\
  \cdots \arrow[r] &  D_n \arrow[r, "\delta_n"'] & D_{n-1} \arrow[r, "\delta_{n-1}"'] & \cdots \arrow[r, "\delta_3"']& D_2 \arrow[r, "\delta_2"'] & D_1
  \end{tikzcd}
$$
which induce the same map on objects, denoted $f_0\co C_0 \to D_0$, and satisfy
$$
\delta_n f_n(c)= f_{n-1} \delta_n(c)
\quad \text{and} \quad
f_n(\lact{c_1}{c}) = \lact{f_1(c_1)}{f_n(c)}
$$
for all $n\geq 2$ and all elements $c \in \CO_n$ and $c_1 \in \CO_1$ with the same target. In particular, the pair $(f_2,f_1)$ is a morphism of crossed modules.

Crossed complexes and their morphisms form a category denoted by $\textsf{Crs}$.

\subsection{Free crossed complexes}\label{sect-free-crossed-complexes-def}
A crossed complex 
$$
\CO =\{ \dots \xrightarrow{\delta_4} \CO_3\xrightarrow{\delta_3} \CO_2 \xrightarrow{\delta_2} \CO_1 \}
$$
is \emph{free} if the crossed module~$\delta_2$ is free and for all $n \geq 3$, the $\pi_1(\CO)$-module $\CO_n$ is free. If such is the case, a \emph{free basis for $\CO$} is a family $\mathcal{B}_*=\{\mathcal{B}_n\}_{n \geq 1}$ where  $\{\mathcal{B}_1,\mathcal{B}_2\}$ is a free basis for~$\delta_2$ and $\mathcal{B}_n$ is a free basis for the $\pi_1(\CO)$-module $\CO_n$ for all $n \geq 3$.

Let  $C$ be a free crossed complex, $D$ be a crossed complex, and $f_0\co C_0 \to D_0$ be a map between the sets of objects of $C$ and $D$. Then a morphism of crossed complexes $f\co \CO \to D$ inducing the map $f_0$ on objects is fully determined by its value on a 
free basis $\mathcal{B}_*=\{\mathcal{B}_n\}_{n \geq 1}$ for $C$, that is, by the elements $\{f_n(b)\}_{b \in \mathcal{B}_n}$  with $n\geq 1$ satisfying
\begin{itemize}
  \item $s(f_1(b))= f_0(s(b))$ and $t(f_1(b))= f_0 (t(b))$ for all $b \in \mathcal{B}_1$,
  \item $t(f_n(b))= f_0(t(b))$ and $\delta_n \circ f_n(b)= f_{n-1}\circ \delta_n(b)$ for all $b \in \mathcal{B}_n$ with $n \geq 2$,
\end{itemize}
where $s,t$ are the source and target maps. 

\subsection{Homotopies}\label{sect-homotopy-of-crossed-complexes}
Let $f,g \co \CO \to D$ be two  morphisms of crossed complexes inducing the maps $f_0,g_0 \co C_0 \to D_0$ on objects.
A \emph{homotopy} $H$ from $f$ to $g$ is a family $H= \{ H_n \co \CO_n \to D_{n+1} \}_{n \geq 0}$ of maps
\begin{equation*}
  \begin{tikzcd}
  \cdots \arrow[r] &  \CO_{n+1} \arrow[r, "\delta_{n+1}"] \arrow[d,shift left,"g_{n+1}"]  \arrow[d,shift right,"f_{n+1}"'] &[1.5em] \CO_{n} \arrow[r] \arrow[d,shift left,"g_n"]  \arrow[d,shift right,"f_n"'] \arrow[dl, "\!\!H_n"] & \cdots \arrow[r] & \CO_2 \arrow[r, "\delta_2"] \arrow[d,shift left,"g_2"]  \arrow[d,shift right,"f_2"'] &[1.5em] \CO_1 \arrow[d,shift left,"g_1"]  \arrow[d,shift right,"f_1"'] \arrow[r, "{s,t}"]  \arrow[dl, "\!\!H_1"] &[1.5em] \CO_0 \arrow[d,shift left,"g_0"]  \arrow[d,shift right,"f_0"']  \arrow[dl, "\!\!H_0"]\\
  \cdots \arrow[r] &  D_{n+1} \arrow[r, "\delta_{n+1}"'] & D_{n} \arrow[r] & \cdots \arrow[r]& D_2 \arrow[r, "\delta_2"'] & D_1 \arrow[r, "{s,t}"'] & D_0
  \end{tikzcd}
  \end{equation*}
which satisfy the following conditions:
\begin{itemize}
    \item $t(H_0(a))= g_0(a)$ for all $a \in \CO_0$.
  \item $H_n(c) \in D_{n+1} (g_0(s(c)))$ for all $c \in \CO_n$ and $n \geq 1$.
  \item $H_1$ is a derivation over $g_1$: for all $x,y \in \CO_1$ with $t(x)=s(y)$,
   $$H_1(xy)= H_1(x)\bigl(\lact{g_1(x)}{H_1(y)}\bigr).$$
 \item $H_n$ is multiplicative and preserves the action over $g_1$ when $n \geq 2$:  for all~$x \in \CO_1$ and $c,d \in \CO_n$ with $t(x)=t(c)=t(d)$,
  $$H_n(cd)=H_n(c) H_n(d) \quad \text{and} \quad H_n( \lact{x}{c})= \lact{{g_1(x)}}{H_n(c)}.$$  
  \item $f$ is computed from $H$ and $g$ as follows: for all $c \in \CO_n$,
      \begin{align*}
        f_n(c) = \left\{ \begin{array}{ll}
        s(H_0(c)) & \text{if } n=0,\\
H_0(s(c))\delta_2(H_1(c))g_1(c)H_0(t(c))^{-1} & \text{if } n=1,\\
        \lact{{H_0(s(c))}}{\!{\big( \delta_{n+1}(H_n(c)) H_{n-1}(\delta_n (c)) g_n(c) \big)}} & \text{if } n \geq 2.
        \end{array} \right.
      \end{align*}
\end{itemize}

If the crossed complex $\CO$ is free, then such a homotopy is completely determined by its values on a free basis of $\CO$ (see \cite[Corollary 9.6.6]{BHS}).

\subsection{The groupoid of homotopies}\label{sect-groupoid-of-homotopies}
Let $C$ and $D$ be crossed complexes. Homotopies between  morphisms of crossed complexes $\CO \to D$ form a groupoid. The product
$HK$ of a homotopy $H$ from $f$ to $g$ with a homotopy $K$ from $g$ to $h$, the unit $1_f$ of a morphism $f$, and the inverse $H^{-1}$ of a homotopy $H$ are computed by:
\begin{align*}
& (H\ast K)_0(a)=H_0(a)K_0(a), && (H\ast K)_n(c)=\Bigl(\lact{K_0(s(c))^{-1}}{\! H_n(c)}\Bigr) K_n(c), \\
& (1_f)_0(a)=1_{f_0(a)} , && (1_f)_n(c)=1_{f_0(s(c))}, \\
& (H^{-1})_0(a)=H_0(a)^{-1}, && (H^{-1})_n(c)=\lact{H_0(s(c))}{\bigl(}H_n(c)^{-1} \bigr),
\end{align*}
for all  $a \in \CO_0$ and $c \in \CO_n$ with $n \geq 1$. 

In particular, this implies that being homotopic is an equivalence relation on morphisms of crossed complexes. We denote by $[C,D]$ the set of homotopy classes of morphisms of crossed complexes $C \to D$.

\subsection{Crossed complexes of filtered spaces}\label{exa-crossed-complexes-filtered}
Let $X_*$ be a filtered space, that is, a topological space $X$ equipped with a filtration
\begin{align*}
  X^{\ast}= \{X^0 \subseteq X^1 \subseteq \dots \subseteq X^n \subseteq \dots \subseteq X\}
\end{align*}
consisting in an increasing sequence of subspaces whose union is $X$. Consider the groupoid $\pi_1(X^1,X^0)$ whose product is the concatenation of paths.
The \emph{fundamental crossed complex $\Pi X^{\ast}$} of $X^{\ast}$ over the groupoid $\pi_1(X^1,X^0)$ is given by
\begin{align*}
  \cdots \xrightarrow{\delta_{n+1}} \pi_n( X^n, X^{n-1},X^0) \xrightarrow{\delta_{n}} \pi_{n-1}(X^{n-1}, X^{n-2}, X^0) \to \cdots \\
  \cdots   \xrightarrow{\delta_3} \pi_2(X^2,X^1,X^0) \xrightarrow{\delta_2} \pi_1(X^1,X^0)
\end{align*}
where $\delta_2$ is the usual boundary map while, for any $x \in X^0$ and $n\geq 3$, the restriction of $\delta_n$ to the component $\pi_n(X^n, X^{n-1},x)$ is given by the composition
\begin{align*}
  \pi_n(X^n, X^{n-1},x) \xrightarrow{\partial_n} \pi_{n-1}(X^{n-1},x) \xrightarrow{i_{n-1}} \pi_{n-1}(X^{n-1}, X^{n-2}, x).
\end{align*}
The action of the groupoid $\pi_1(X^1,X^0)$ is given by generalizing the fundamental group action on higher homotopy groups.

The assignment $X^* \mapsto \Pi X^{\ast}$ extends to a functor $\Pi \co \textsf{FTop} \to \textsf{Crs}$ from the category of filtered spaces and filtered maps to the category of crossed complexes (see \cite[Section 7.1.v]{BHS}).

\subsection{Crossed complexes of CW-complexes}\label{exa-crossed-complexes-of-CW-complexes}
Let $X$ be a CW-complex. Denote by $X^*$ its skeletal filtration (i.e.,~$X^n$ is the  $n$-th skeleton).
Then the fundamental crossed complex~~$\Pi X^{\ast}$ (see Section~\ref{exa-crossed-complexes-filtered}) is the free crossed complex with free basis given by the homotopy classes of the characteristic maps of the cells of $X$ (see \cite[Corollary 8.3.14]{BHS}).

\subsection{Homotopy classification theorem}\label{defn-of-pointed-crossed-things}
The functor $\Pi \co \textsf{FTop} \to \textsf{Crs}$ from Section~\ref{exa-crossed-complexes-filtered} has a section (up to natural equivalence) $B\co \textsf{Crs} \to \textsf{FTop}$ which sends a crossed complex~$\CO$ to its \emph{classifying space}  $B\CO$ (defined as the geometric realization of the nerve of~$\CO$). For example, regarding a group as a groupoid with a single object,  a crossed module $\CM\co E \to H$ (of groups) in the sense of Section~\ref{sect-crossed-modules-def} gives rise to the crossed complex $$\dots \to 1 \to 1 \to E \xrightarrow{\CM} H$$ whose classifying space is the classifying space $B \CM$ of $\CM$ described in Section~\ref{sect-crossed-modules-classifying-spaces}.

A key theorem in the theory of crossed complexes is the following \emph{homotopy classification theorem}:
\begin{thm}[{\cite[Theorem A]{BH1}, \cite[Theorem 11.4.19]{BHS}}]\label{thm-HCT}
Let~$\CO$ be a crossed complex and let $M$ be a CW-complex with  its skeletal filtration $M^{\ast}$. Then there is a canonical bijection
$$
[M, B \CO] \cong [\Pi M^{\ast}, \CO]
$$ 
where the left-hand side is the set of homotopy classes of maps of spaces, and the right-hand side is the set of homotopy classes of morphisms of crossed complexes.
\end{thm}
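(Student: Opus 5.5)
We plan to construct a comparison map $[M,B\CO]\to[\Pi M^\ast,\CO]$ and then prove it bijective by realizing, cell by cell, morphisms of crossed complexes and homotopies between them as maps of spaces. To define the map, we filter $B\CO$ by the geometric realizations of the skeleta of the nerve of $\CO$. The first input is the natural isomorphism $\varepsilon\co \Pi(B\CO)^\ast\cong \CO$ expressing that $B$ is a section of $\Pi$ (Section~\ref{defn-of-pointed-crossed-things}). Given a map $g\co M\to B\CO$, cellular approximation makes $g$ homotopic to a filtered map $g^\ast\co M^\ast\to (B\CO)^\ast$. We then set $\Phi[g]=[\varepsilon\circ\Pi(g^\ast)]$, using functoriality of $\Pi\co\textsf{FTop}\to\textsf{Crs}$ from Section~\ref{exa-crossed-complexes-filtered}.

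Well-definedness and injectivity both rest on comparing two notions of homotopy. We use the monoidal structure on $\textsf{Crs}$ and the fact that a homotopy in the sense of Section~\ref{sect-homotopy-of-crossed-complexes} is the same as a morphism $\Pi I^\ast\otimes C\to D$, where $I$ is the unit interval with cells $0,1,[0,1]$. The key input is that $\Pi$ is monoidal on CW-filtrations, $\Pi(I^\ast\otimes M^\ast)\cong \Pi I^\ast\otimes\Pi M^\ast$, which follows from the higher homotopy van Kampen theorem (HHvKT, the higher homotopy Seifert--van Kampen theorem) that $\Pi$ preserves the relevant pushouts. With this, a cellular homotopy $M\times I\to B\CO$ between filtered maps induces a crossed-complex homotopy. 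This shows that $\Phi$ does not depend on the choice of $g^\ast$ or on $g$ within its homotopy class.

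Surjectivity and injectivity are handled by a single extension lemma. A morphism $f\co\Pi X^\ast\to\CO$ defined on the skeleton $X^{n-1}$, with $X$ a CW-complex, extends over the $n$-cells whenever compatible data in $\CO_n$ is prescribed. We prove this lemma using freeness of $\Pi X^\ast$ on the characteristic maps (Section~\ref{exa-crossed-complexes-of-CW-complexes}) together with the property of $B\CO$ that $\pi_n((B\CO)^n,(B\CO)^{n-1})\to\pi_n(B\CO,(B\CO)^{n-1})$ is compatible with $\varepsilon$, and $\pi_k(B\CO,(B\CO)^n)$ behaves like the homotopy of a $J_0$-filtered space. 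The lemma is then applied in two ways. Applied to $X=M$, it realizes a given morphism $\Pi M^\ast\to\CO$ by a filtered map, constructed inductively over the skeleta; sphere boundaries of $n$-cells for $n\ge 3$ are killed because $\delta_n$ is respected. Applied to $X=M\times I$, with the map already prescribed on $M\times\{0,1\}$, it realizes a crossed-complex homotopy by a genuine homotopy, which gives injectivity.

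We expect the main obstacle to be the geometric input about $B\CO$. This means establishing $\Pi(B\CO)^\ast\cong\CO$ and the fibrancy-type property that makes the cell-by-cell extension possible. Our first attempt would be to compute the homotopy groups of the pairs of skeleta of the nerve directly via HHvKT applied to the cell structure of $B\CO$. We would then deduce the extension property from the fact that the nerve is a Kan complex whose simplicial homotopy is determined by $\CO$.
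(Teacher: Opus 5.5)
The paper does not prove this statement; it is quoted from Brown--Higgins, whose argument is built on the cubical nerve of $\CO$ and fibrations of crossed complexes. Your cellular realization strategy is a legitimate alternative route; it is essentially Whitehead's classical argument. Several parts of it are fine: the definition of $\Phi$ via cellular approximation and $\varepsilon$, well-definedness via $\Pi(I^\ast\otimes M^\ast)\cong\Pi I^\ast\otimes\Pi M^\ast$, and the reduction of injectivity to a relative extension over the cells $e\times I$. But the step that carries all the content is missing, and the justification you give for it is wrong.

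Suppose $g$ is filtered on $M^{n-1}$ with $\varepsilon\circ\Pi(g)=f$ there, and let $e$ be an $n$-cell, $n\geq 3$, with characteristic map $\phi_e$. You want to extend $g$ over $e$ so that $g\circ\phi_e$ represents $c=\varepsilon^{-1}f_n([\phi_e])\in\pi_n((B\CO)^n,(B\CO)^{n-1})$. This requires $\partial c=[g\circ\phi_e|_{S^{n-1}}]$ in $\pi_{n-1}((B\CO)^{n-1})$. Compatibility with $\delta_n$ only gives this equality after applying $j_{n-1}\co\pi_{n-1}((B\CO)^{n-1})\to\pi_{n-1}((B\CO)^{n-1},(B\CO)^{n-2})$. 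So you need $j_m$ to be injective for all $m\geq 2$. Equivalently, the image of $\pi_m((B\CO)^{m-1})\to\pi_m((B\CO)^m)$ must vanish, or again, the natural maps $\pi_m(B\CO,v)\to\ker\delta_m/\Ima\delta_{m+1}$ must be bijective for all $m\geq 2$. The extension over the cells $e\times I$ needs the same condition in degree $n$.

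This condition does not follow formally from $\Pi(B\CO)^\ast\cong\CO$, freeness, and $\delta_n$-compatibility. Take $Y=S^2=e^0\cup e^2$. Every map $\pi_n(Y^n,Y^{n-1})\to\pi_n(Y,Y^{n-1})$ is injective, so even reading your stated property of $B\CO$ as injectivity does not help. Yet $[S^3,S^2]\cong\Z$, while $[\Pi(S^3)^\ast,\Pi(S^2)^\ast]$ is a single point, because $\Pi(S^2)^\ast$ is trivial in degree $3$. Hence your sentence ``sphere boundaries of $n$-cells for $n\geq 3$ are killed because $\delta_n$ is respected'' is false in general.

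The real content of the theorem is that the skeletal filtration of $B\CO$ has this $J$-type property. This is where Brown--Higgins' identification $\pi_m(B\CO,v)\cong H_m(\CO,v)$ enters. Your proposal does not formulate the property correctly and leaves it as a hoped-for consequence of the Kan property of the nerve. Once it and $\varepsilon$ are established, your inductive construction for surjectivity and injectivity does go through.
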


\subsection*{Acknowledgements}
The authors acknowledge the support of the CDP C2EMPI, the CPER WaveTech@HdF, and the NSERC Discovery Grant.

\end{document}